\title{Multicolour Poisson Matching}
\author{Gideon Amir, Omer Angel, and Alexander E.~Holroyd}
\date{20 May 2016}
\crefname{thm}{Theorem}{Theorems}
\crefname{prop}{Proposition}{Propositions}
\crefname{lemma}{Lemma}{Lemmas}
\crefname{coro}{Corollary}{Corollaries}
\crefname{section}{Section}{Sections}
\crefname{figure}{Figure}{Figures}
    \newtheorem{thm}{Theorem}
    \newtheorem{lemma}[thm]{Lemma}
    \newtheorem{prop}[thm]{Proposition}
    \newtheorem{coro}[thm]{Corollary}
\theoremstyle{definition} 
    \newtheorem*{remark*}{Remark}
\theoremstyle{remark} 
\renewcommand{\P}{\mathbb P}
\newcommand{\R}{\mathbb R}
\newcommand{\Z}{\mathbb Z}
\newcommand{\N}{\mathbb{N}}
\newcommand{\E}{\mathbb{E}}
\newcommand{\M}{{\cal M}}
\newcommand{\bv}{{v}}
\newcommand{\df}{\bf\boldmath}
\newcommand{\eps}{\varepsilon}
\newcommand{\dseg}[1]{\overrightarrow{#1}}
\newcommand{\lattice}{\mathcal L}
\newcommand{\eqd}{\stackrel{d}{=}}
\DeclareMathOperator{\diam}{diam}
\DeclareMathOperator{\dist}{dist}
\DeclareMathOperator{\geom}{Geom}
\DeclareMathOperator{\Exp}{Exp}
\DeclareMathOperator{\Bin}{Bin}
\DeclareMathOperator{\cone}{cone}
\newcounter{mycount}
\begin{document}

\maketitle

\begin{abstract}
  Consider several independent Poisson point processes on $\R^d$, each with
  a different colour and perhaps a different intensity, and suppose we are
  given a set of allowed family types, each of which is a multiset of
  colours such as red-blue or red-red-green.  We study
  translation-invariant schemes for partitioning the points into families
  of allowed types.  This generalizes the 1-colour and 2-colour matching
  schemes studied previously (where the sets of allowed family types are
  the singletons \{red-red\} and \{red-blue\} respectively).  We
  characterize when such a scheme exists, as well as the optimal tail
  behaviour of a typical family diameter. The latter has two different
  regimes that are analogous to the 1-colour and 2-colour cases, and
  correspond to the intensity vector lying in the interior and boundary of
  the existence region respectively.

  We also address the effect of requiring the partition to be a
  deterministic function (i.e.\ a factor) of the points. Here we find the
  optimal tail behaviour in dimension $1$.  There is a further separation
  into two regimes, governed by algebraic properties of the allowed family
  types.
\end{abstract}

\renewcommand{\thefootnote}{}
\footnotetext{{\bf\hspace{-6mm}Key words:} Poisson process,
point process, invariant matching, invariant partition,
factor map.} \footnotetext{{\bf\hspace{-6mm}AMS 2010
Mathematics Subject Classifications:} 60D05; 60G55; 05C70}

\section{Introduction}

The following random matching model was studied by Holroyd,
Pemantle, Peres and Schramm \cite{HPPS}.  Given two
independent homogeneous Poisson processes (called red and
blue) in $\R^d$, and a translation-invariant scheme for
bijectively matching red to blue points, what tail
behaviour is possible for the distance $X$ from a typical
point to its partner in the matching?  It turns out that
the answer is highly dependent on dimension.  For $d\geq 3$
there exist matching schemes in which $X^d$ has an
exponential tail, while for $d=1,2$, every matching scheme
has $\E X^{d/2} = \infty$. These bounds are essentially
optimal.  On the other hand, one may consider a Poisson
process of a single colour, and ask for a matching that
partitions the points into pairs.  In this case, there
exist matching schemes where $X^d$ has exponential tails in
all dimensions.  See \cite{HPPS} for proofs of these facts
and various related results.

In this article we consider extensions to arbitrary matching rules between
Poisson points of multiple colours.  For example, suppose that the red and
blue processes have different intensities, and that blue points must be
matched to red points, but red points are allowed to match to points of
either colour.  What is the best tail behaviour of the matching distance
$X$ that can be achieved?  Alternatively, suppose that points have three
colours (red, blue and green), and must be matched in pairs that contain
points of two distinct colours.  Or, suppose that the points must be
arranged into triplets consisting of a point of each colour.  We analyse a
general case that includes all the above examples.  It turns out that there
are three possibilities: either no translation-invariant matching exists,
or the optimal tail behaviour is similar to that for two-colour matching,
or to that for one-colour matching (as discussed above).  We give a
criterion for determining which case holds in terms of the matching rule
and the intensities of the processes of each colour.

To describe the general case we introduce some notation. Let $S_1,\ldots,S_q$
be disjoint sets (of points) with union $S$. We say that elements of $S_i$
have {\df colour} $i$.  Let $\N=\{0,1,\dots\}$. The {\df type} of a finite
set $F\subset S$ is the vector $(\#(F\cap S_i))_{i=1}^q \in \N^q$ specifying
the number of points of each colour. Let $V=\{\bv^1,\dots,\bv^k\}\subset
\N^q$ be a finite set of allowed types.  A \mbox{{\df
  $V$-matching}} of $(S_1,\dots,S_q)$ is a partition of $S$ into finite
sets, called {\df families}, each of which has type lying in $V$. For
example, if $q=2$ and $V=\{(1,1)\}$, a $V$-matching of $(S_1,S_2)$ is just
a perfect matching of the points of $S_1$ with the points of $S_2$
(equivalently, a bijection).

The support of a simple point process $\Pi$ is denoted by
\[
[\Pi]:=\{x:\Pi(\{x\})=1\}.
\]
Let $\Pi_1,\dots,\Pi_q$ be disjointly supported simple point processes on
$\R^d$. We sometimes consider the vector-valued process $\Pi$ given by
$\Pi(\cdot)=(\Pi_1(\cdot),\ldots,\Pi_q(\cdot))$, and call elements of
$[\Pi_i]$ {\df points} of $\Pi$ of {\df colour} $i$.  A {\df $V$-matching
  scheme} for $\Pi_1,\dots,\Pi_q$ is a simple point process $\M$ on
unordered finite subsets of $\R^d$ such that almost surely $[\M]$ is a
$V$-matching of $([\Pi_1],\dots,[\Pi_q])$. We say that $\M$ is {\df
translation-invariant} if the joint law of $(\M,\Pi_1,\dots,\Pi_q)$ is
invariant under the (diagonal) action of translations of $\R^d$.  Note that
(for the time being) $\M$ is not required to be a function of
$(\Pi_1,\dots,\Pi_q)$.

Let $\M$ be a translation-invariant $V$-matching scheme, and write
$\Psi=\sum_i\Pi_i$.  For a point $x\in[\Psi]$ we write $\M(x)$ for the unique
family that contains $x$.  For a set $S\subset \R^d$ write $\diam(S)$ for its
(Euclidean) diameter.  We are primarily interested in $\diam(\M(x))$ for a
``typical'' point $x\in[\Psi]$.  To make this precise, define
\[
F(r):=\frac{1}{\E\Psi(D)} \E \#\Big\{x\in [\Psi]\cap D: \diam
[{\cal M}(x)] \leq r\Big\},
\]
where $D$ is some set with positive finite Lebesgue measure. (In the translation
invariant cases we consider, $F$ is independent of the choice of $D$.)
Note that $F$ is a distribution function.  We introduce a random
variable $X$ with law $\P^*$ and expectation operator $\E^*$ such that
\[
\P^*(X \leq r) = F(r) \qquad \forall r.
\]
The random variable $X$ represents the diameter of the family of a typical
point. We call $X$ the \textbf{typical diameter} of $\M$.
The random variable $X$ may be interpreted as $\diam(\M(0))$ under a Palm
process derived from $\M$ (see e.g.\ \cite[Chapter 11]{kall}).

Our first main result is a trichotomy for the law of $X$.
For a set $A\subset\R^q$, we denote its boundary (resp.\
interior) by $\partial A$ (resp.\ $A^\circ$). Let
$\cone(V)$ denote the cone spanned by the allowed family
types $V = \{\bv^1,\dots,\bv^k\} \subset\N^q$, defined by
\[
\cone(V) := \Big\{\sum_{i=1}^k a^i \bv^i: a^1,\dots,a^k\in [0,\infty)
\Big\} \subset \R^q.
\]

\enlargethispage*{2cm}
\begin{thm}\label{T:main}
  Let $(\Pi_i)_{i=1,\dots,q}$ be independent homogeneous Poisson point
  processes on $\R^d$ with respective intensities $\lambda_i\in(0,\infty)$.
  Let $V\subset \N^q$ be a finite set
  not containing every unit vector of $\N^q$.
  \begin{enumerate}[label={\textup{(\roman*)}},nosep,leftmargin=18mm,itemindent=-8mm]
  \item If $\lambda \notin \cone(V)$:  \label{c:unsatisfiable}

    no translation-invariant $V$-matching scheme exists.

  \item If $\lambda\in\partial(\cone(V))$ and $d\leq 2$: \label{critical}

    there exists a translation-invariant $V$-matching scheme such that
    $\P^*(X>r)\leq C r^{-d/2}\;\forall r$, while every
    translation-invariant $V$-matching scheme satisfies $\E^* X^{d/2} =
    \infty$.

  \item If either $\lambda\in (\cone(V))^\circ$, or
    $\lambda\in\partial\cone(V)$ and $d\geq 3$: \label{c:underconstrained}

    there exists a translation-invariant $V$-matching scheme such that
    $\P^*(X>r) \leq e^{-C r^d}\;\forall r$, while every
    translation-invariant $V$-matching scheme satisfies $\P^*(X>r)\geq
    e^{-c r^d}\;\forall r$.
  \end{enumerate}
  Throughout, $c,C$ are positive finite constants depending on $d$,
  $\lambda$ and $V$ but not $r$.
\end{thm}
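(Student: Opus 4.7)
The plan is to split Theorem~\ref{T:main} into five pieces---nonexistence in case~\ref{c:unsatisfiable}, and matching upper and lower bounds in each of cases~\ref{critical} and~\ref{c:underconstrained}---and treat each separately. A common mass-balance identity underlies them all. Given any translation-invariant $V$-matching scheme $\M$, let $\alpha_j\geq 0$ denote the intensity of the translation-invariant point process obtained by marking each family of type $\bv^j$ by its centroid; counting colour-$i$ points in a bounded test set in two ways yields
\[
  \lambda_i = \sum_j \alpha_j\, v^j_i \quad\text{for every } i,
\]
so $\lambda=\sum_j\alpha_j\bv^j\in\cone(V)$. This settles case~\ref{c:unsatisfiable}. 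In case~\ref{critical}, I would take a nonzero linear functional $\ell:\R^q\to\R$ with $\ell\geq 0$ on $\cone(V)$ and $\ell(\lambda)=0$; then $\sum_j\alpha_j\ell(\bv^j)=0$ with each summand non-negative, so only types on the face $\{\bv:\ell(\bv)=0\}$ can occur with positive intensity.

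For case~\ref{c:underconstrained} the lower bound comes from an empty-ball estimate. By hypothesis some unit vector $e_i$ is absent from $V$, so every family containing a colour-$i$ point has size at least two; under the Palm measure at a colour-$i$ point, the event that the ball of radius $r$ contains no other point of $\Psi$ has probability $e^{-cr^d}$ and forces $X>r$, and weighting by $\lambda_i/\sum_j\lambda_j$ yields the unconditional bound. For the upper bound when $\lambda\in(\cone(V))^\circ$, I would tile $\R^d$ into large cubes and partition the points in each cube directly into $V$-families (Poisson concentration makes the colour counts lie in $\cone(V)$ with probability close to one), then merge the exceptional cubes hierarchically. When $\lambda\in\partial\cone(V)$ and $d\geq 3$, the construction reduces via the role-assignment below to the HPPS bipartite Poisson matching in $d\geq 3$, which already has exponential tails.

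The core of the theorem is case~\ref{critical}. For the upper bound, write $\lambda=\sum_{j\in J}\alpha_j\bv^j$ with $\alpha_j>0$ and $J\subset\{j:\ell(\bv^j)=0\}$, which is possible because $\lambda$ lies in the relative interior of some face of $\cone(V)$. Generate an independent Poisson ``centre'' process of rate $\alpha_j$ for each $j\in J$, and for each colour-$i$ point independently assign a role label $(j,k)$ with $j\in J$ chosen with probability $\alpha_j v^j_i/\lambda_i$ and $k\in\{1,\dots,v^j_i\}$ chosen uniformly. For each triple $(j,i,k)$ the resulting sub-process is Poisson of intensity $\alpha_j$ and matches one-to-one with the centres of type $j$; applying the HPPS red-blue construction to each pair gives a translation-invariant matching whose equivalence classes are families of types in $V$. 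In $d\leq 2$ each constituent matching has matching-distance tail $\leq Cr^{-d/2}$, and a family diameter is dominated by a sum of boundedly many such distances, so $\P^*(X>r)\leq Cr^{-d/2}$. For the lower bound, mass balance applied to any bounded Borel $A$ gives
\[
  \ell(\Pi(A))=\sum_i\ell(e_i)\Pi_i(A) = \sum_{F\text{ straddles }\partial A} \ell(F\cap A),
\]
so the left side, a mean-zero sum of shifted independent Poissons with variance of order $|A|$, is produced entirely by boundary-crossing families. Taking $A=B_R$, the absolute mean is of order $R^{d/2}$, while each straddling family of diameter at most $r$ lies in the shell $B_R\setminus B_{R-r}$ of volume $O(R^{d-1}r)$; running this comparison along a suitable sequence of radii, in the style of the $d\leq 2$ argument of \cite{HPPS}, yields $\E^* X^{d/2}=\infty$.

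The main obstacle is the $d=2$ endpoint of the case~\ref{critical} lower bound: the naive shell estimate is too weak once $r$ is comparable to $R$, so a multiscale summation of discrepancies over balls of geometrically increasing radii is needed to convert the $R^{d/2}$ fluctuation into a $\P^*(X>r)\geq c/r$ bound. A secondary obstacle is the hierarchical renormalisation for case~\ref{c:underconstrained}: one must show that after merging the locally failed cubes the resulting families still have exponential diameter tails, which requires a careful iteration of cube sizes against the per-cube failure probability.
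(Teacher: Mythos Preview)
Your decomposition and most of the individual pieces match the paper closely: the mass-balance argument for~(i), the empty-ball lower bound for~(iii), and the reduction to HPPS two-colour matching for the upper bounds in~(ii) and in the boundary case of~(iii) are essentially the paper's arguments (the paper phrases~(i) via hyperplane separation and mass transport rather than via intensities of family-centroid processes, but the content is the same).

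\medskip
\textbf{The underconstrained upper bound.} Here you take a genuinely different route. Your cube-tiling scheme has a hidden snag: Poisson concentration only puts the colour-count vector deep inside $\cone(V)$, but matchability requires membership in $\lattice_+$, and when $\lattice(V)\neq\Z^q$ the count in a fixed cube lands in an asymptotically uniform coset of $\Z^q/\lattice$, so a cube is directly matchable only with probability about $1/|\Gamma|$ regardless of its size. You would need to strip a bounded (randomly chosen) residue from each cube to put the remainder into $\lattice$ before matching, and then absorb those leftovers in the hierarchy; this can be made to work but is not the automatic step you describe. The paper sidesteps the issue entirely: it first works in $d=1$, accumulating points along the line until the running count is matchable (the hitting time of $\lattice_+$ has exponential tail because $\Z^q/\lattice$ is finite and the count enters $\cone_\alpha(V)$ quickly, and $\lattice\cap\cone_\alpha(V)\subset\lattice_+$), and then lifts to $\R^d$ via a random Hilbert space-filling curve satisfying $\|x-y\|^d\leq C\,d_H(x,y)$.

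\medskip
\textbf{The $d=2$ critical lower bound.} This is a genuine gap. In $d=2$ the shell estimate collapses: a point at distance $t$ from $\partial B_R$ whose family crosses the boundary has $X>t$, and the locus of such points has circumference $2\pi(R-t)$, so your inequality reads
\[
  cR \;\leq\; C\int_0^R (R-t)\,\P^*(X>t)\,dt \;\leq\; CR\int_0^\infty \P^*(X>t)\,dt \;=\; CR\,\E^* X,
\]
which is perfectly consistent with $\E^* X<\infty$. Summing over dyadic radii does not help; each scale yields the same trivial inequality, and no new information is extracted. The paper's argument is structurally different. Assuming $\E^* X<\infty$ (and passing to an ergodic component), it builds a planar flow by sending the charge $\eta_i$ of each point along a straight edge to the leftmost point of its family, and defines the net flux $F(u,v)$ across the directed segment $\overrightarrow{uv}$; the moment assumption makes $F(u,v)$ a.s.\ finite with finite mean. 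The ergodic theorem along rays gives $F(x,x+nu)/n\to\phi(u)$ in $L^1$, and a parallelogram identity forces $\phi$ to be deterministic and independent of the basepoint. But the flux around the boundary of an $n\times n$ square equals the total charge inside, so
\[
  \frac{\Psi_\eta([0,n]^2)}{n}\;\xrightarrow{\ L^1\ }\;\phi(e_1)+\phi(e_2)-\phi(e_1)-\phi(e_2)=0,
\]
contradicting the central limit theorem, which gives a nondegenerate Gaussian limit for the same quantity. The key idea you are missing is this conversion of the two-dimensional charge discrepancy into one-dimensional fluxes along the sides of a square, to which an ergodic average can be applied; a purely radial shell accounting cannot produce the needed cancellation.
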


Note that since $V$ is finite, $\cone(V)$ is a closed set,
and so the three cases are mutually exclusive and cover all
possible $\lambda$.  If all unit vectors are in $V$ then
the trivial matching with all singletons has $X=0$ a.s.,
which is of no interest.  The case $\lambda \notin
\cone(V)$ is referred to as {\bf unsatisfiable}.  The case
$\lambda\in\partial\cone(V)$ is {\bf critical}. The case
$\lambda\in\cone(V)^\circ$ is {\bf underconstrained}. Note
that (with respect to the tail of $X$) the critical case
behaves like the underconstrained case in dimensions $d>2$.

Here are several examples of special cases of \cref{T:main}, starting with the
two cases considered in \cite{HPPS}.

\begin{enumerate}
\item {\bf 1-colour matching}. Let $\lambda=(1)$ and $V=\{(2)\}$. (All
  points are the same colour, and each family must contain two points). This
  is a underconstrained setting.  Indeed, every matching problem with a single
  colour is underconstrained.

\item {\bf 2-colour matching}. Let $\lambda=(\lambda_1,\lambda_2)$ and
    $V=\{(1,1)\}$. (Each family comprises a red and a blue point.) This
    case is critical if $\lambda_1=\lambda_2$, and otherwise
    unsatisfiable.
\end{enumerate}

\noindent
(The above two cases of \cref{T:main} were proved in \cite{HPPS}.)

\begin{enumerate}[resume]
\item {\bf Bisexuality}. Let $\lambda=(\lambda_1,\lambda_2)$ and
  $V=\{(2,0),(1,1)\}$. (Each red point must be matched to a blue point, but
  a blue point may be matched to another point of either colour).  This is
  unsatisfiable if $\lambda_1<\lambda_2$, critical if
  $\lambda_1=\lambda_2$, and underconstrained if $\lambda_1>\lambda_2$.

\item {\bf Triplets}. Let $\lambda=(1,1,1)$ and $V=\{(1,1,1)\}$. (Red, blue
  and green points have equal intensities, and a family must contain of one
  of each colour). This setting is critical.

\item {\bf Single family type}. Generalizing the previous examples, suppose
  $V=\{v\}$ consists of a single family type $v\in\N^q$.  If there is a
  single colour this is underconstrained.  If there is more than one colour
  and $\lambda=av$ for some $a$ this is critical, while if $\lambda$ is not
  a multiple of $v$ this is unsatisfiable.

\item \label{eg:triangle} {\bf Colourful matching}. Let $V = \{(1,1,0),
    (1,0,1), (0,1,1)\}$. (Red, green and blue points must be matched into
    pairs containing distinct colours.) If all colours have the same
    intensity, $\lambda=(1,1,1)$, then this is underconstrained.  Moreover,
    the same holds as long as the entries of $\lambda$ form a
    non-degenerate triangle.  If the triangle inequality is violated this
    setting becomes unsatisfiable, while a degenerate triangle (where one
    intensity equals the sum of the others) is critical.
\end{enumerate}

We also consider the question of whether it is possible to have a
\textbf{factor} matching, i.e.\ a matching that is a deterministic function
of the Poisson processes $(\Pi_1,\ldots,\Pi_q)$, and, if so, what can be said
about the tail of $X$ for factor matchings.  In the one dimensional case, we
answer this in the following theorem.  A central player here is the lattice
spanned by the allowed family types.  For allowed family types
$\{\bv^1,\dots,\bv^k\}$, define the lattice
\[
\lattice = \lattice(V) := \Bigl\{\sum_i n^i \bv^i : \ n^1,\dots,n^k \in
  \Z\Bigr\}\subset \Z^q.
\]

\begin{samepage}
\begin{thm}[Factor matchings]\label{T:factor}
  Consider dimension $d=1$.  Let $(\Pi_i)_{i=1,\dots,q}$ be independent
  homogeneous Poisson point processes on $\R$ with respective intensities
  $\lambda_i\in(0,\infty)$.  Let $V\subset \N^q$ be a finite set not
  containing every unit vector of $\N^q$.
  \begin{enumerate}[label={\textup{(\roman*)}},nosep,leftmargin=18mm,itemindent=-8mm]
  \item If $\lambda\in\partial\cone(V)$: \label{factor_crit}

    there exists a translation invariant matching that is a factor of the
    Poisson processes with $\P^*(X>t) \leq C/\surd{t}$ for some constant
    $C$.

  \item If $\lambda\in\cone(V)^\circ$ and $\lattice=\Z^q$: \label{factor_exp}

    there exists a translation invariant matching that is a factor of the
    Poisson processes with $\P^*(X>t) \leq Ce^{-ct}$ for some constants
    $c,C$.

  \item If $\lambda\in\cone(V)^\circ$ and $\lattice\neq\Z^q$:
    \label{factor_cauchy}

    there exists a translation invariant matching that is a factor of the
    Poisson processes with $\P^*(X>t) \leq C/t$ for some constant $C$,
    and any translation invariant matching that is a factor of the
    Poisson processes has $\E^* X = \infty$.
  \end{enumerate}
\end{thm}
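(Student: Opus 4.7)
The plan is to treat the three cases separately. All three upper bounds come from explicit factor constructions based on one-dimensional random walk excursions; the novel ingredient is the lower bound in \ref{factor_cauchy}, which uses a character/lattice obstruction analogous to the one that forbids factor $1$-colour matchings of small diameter in $\R$. For \ref{factor_crit}, since $\lambda\in\partial\cone(V)$, I choose a supporting hyperplane of $\cone(V)$ at $\lambda$ with unit normal $\alpha$ satisfying $\alpha\cdot\lambda=0$ and $\alpha\cdot v\ge 0$ for every $v\in V$; only types $v$ with $\alpha\cdot v=0$ may be used with positive density. The signed count $W(x):=\sum_i\alpha_i\Pi_i((-\infty,x])$ is a two-sided centred random walk with $\sqrt{x}$-order fluctuations, and its successive visits to $0$ partition $\R$ into excursion intervals. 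Within each excursion the $\alpha$-weighted count vanishes, so the enclosed colour vector lies in $\alpha^\perp\cap\cone(V)$, giving a sub-problem in a smaller ambient dimension that I iterate; the innermost step is underconstrained and is solved by the construction of \ref{factor_exp} below. The typical family diameter is at most the enclosing excursion length, which has the classical $1/\sqrt{t}$ tail for centred one-dimensional random walks.

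For \ref{factor_exp}, the plan is a local block construction. A factor rule partitions $\R$ into successive random blocks of bounded length (for instance using the gaps between points chosen as local extremes of some factor statistic), and the enclosed points of each block are $V$-partitioned. Since $\lambda\in\cone(V)^\circ$ and $\lattice=\Z^q$, each block's integer colour vector admits a nonnegative integer decomposition $n=\sum_{v} n_v v$ with probability bounded away from zero (nonnegativity uses $\lambda\in\cone(V)^\circ$; integer solvability uses $\lattice=\Z^q$), and a geometric restart over blocks then gives exponential tails. The upper bound in \ref{factor_cauchy} uses the same block construction, except that blocks now close up only \emph{modulo} $\lattice$; the residual in $G:=\Z^q/\lattice$ must be transported over longer distances by a renewal pairing of visits of the modulo-$\lattice$ walk $\bar W(x):=\sum_i e_i\Pi_i((-\infty,x])\bmod\lattice$ to $0\in G$. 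This pairing has the $1/t$ tail typical of first-return times of centred one-dimensional walks projected modulo a finite group, yielding the claimed $C/t$ bound.

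For the lower bound in \ref{factor_cauchy}, choose a nontrivial character $\chi\colon\Z^q\to\R/\Z$ vanishing on $\lattice$; such $\chi$ exists precisely because $\lattice\subsetneq\Z^q$. For any factor $V$-matching $\M$ and any interval $I=[a,b]$, consider $S_I:=\chi\bigl(\sum_i e_i\Pi_i(I)\bigr)\in\R/\Z$. By the CLT, $S_I$ is asymptotically uniform on $\R/\Z$ as $|I|\to\infty$. On the other hand, decomposing $\sum_i e_i\Pi_i(I)=\sum_F v(F\cap I)$ and using $\chi(v(F))=0$ for families $F\subseteq I$ (since $v(F)\in V\subset\lattice$), we obtain $S_I=\sum_{F\,\text{crosses}\,\partial I}\chi\bigl(v(F\cap I)\bigr)$, a sum of a bounded number of bounded terms. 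Hence the expected number of families crossing $\partial I$ is $\Omega(1)$ uniformly in $|I|$, and a mass-transport identity converts this into a lower bound on $\bigl(\sum_i\lambda_i\bigr)\E^*X$. The main technical obstacle is amplifying this $\Omega(1)$ statement into the divergence $\E^*X=\infty$; I would do this by applying the character argument simultaneously over a sequence of intervals of exponentially growing length and exploiting the approximate independence of crossings at well-separated scales to sum $\Omega(1)$ contributions from distinct scales.
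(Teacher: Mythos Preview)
Your proposal underestimates the central difficulty in all three constructions: ensuring that the matching is a \emph{factor}.  In \ref{factor_exp}, a ``geometric restart over blocks'' is a one-sided procedure with no canonical starting point; given the colour sequence there are typically several distinct partitions of $\Z$ into matchable runs, and selecting one in a translation-equivariant way is precisely the problem.  The paper solves this by extracting i.i.d.\ fair coins $(\epsilon_n)$ from the fractional parts of the inter-point gaps (independent of the colours), redefining a ``good block'' to end only when its content is matchable \emph{and} $\epsilon_n=1$, and then proving that the resulting partition of $\Z$ into good blocks is almost surely \emph{unique} via a coupling argument for the associated Markov chain.  That coupling uses $\lattice=\Z^q$ in an essential way: two copies of the chain driven by the same $(\xi_n,\epsilon_n)$ but started at different states can be steered to hit $0$ simultaneously only because every vector in $\N^q$ can be pushed into $\lattice_+$ by adding the same increment.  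Your sketch contains no analogue of this uniqueness step, and ``local extremes of some factor statistic'' does not supply one.

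The same issue undermines your \ref{factor_cauchy} upper bound.  The visits of $\bar W(x)=\Pi(0,x]\bmod\lattice$ to $0$ are \emph{not} a factor of $\Pi$: translating the origin shifts $\bar W$ by an element of $\Gamma:=\Z^q/\lattice$, so there are $|\Gamma|$ equally valid ``zero sets'' and no equivariant way to choose one (this is essentially the content of the lower bound you are trying to prove).  Your tail heuristic is also off: $\bar W$ is an irreducible chain on the \emph{finite} group $\Gamma$, so its return times to $0$ have exponential, not $1/t$, tails.  The paper's construction here is substantially more elaborate: a modified Markov chain that deliberately \emph{skips} certain points (using a further extracted coin sequence $(\zeta_n)$) so that a uniqueness/coupling argument can be salvaged, followed by an infinite sequence of stages in which the skipped points are fed back in at geometrically decreasing density $\asymp 2^{-s}$.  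The $1/t$ tail arises from summing stage-wise exponential tails $2^{-s}e^{-c t/2^s}$ over $s$.

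For the lower bound in \ref{factor_cauchy} you have the right obstruction (a nontrivial character of $\Z^q/\lattice$), but your amplification is shaky: the families crossing $\partial[-2^k,2^k]$ and $\partial[-2^{k+1},2^{k+1}]$ may well be the \emph{same} family, so there is no independence across scales to sum.  The paper's argument is cleaner and avoids amplification altogether.  Assuming $\E^*X<\infty$, the crossing vector $R(t)$ (points in $(t,\infty)$ whose family meets $(-\infty,t]$) is a.s.\ finite, and one checks the coset identity $R(t)-R(0)+\Pi(0,t]\in\lattice$ for all $t$.  Because the matching is a factor, $R(0)+\lattice$ and $R(t)+\lattice$ are approximable by functions of $\Pi$ restricted to bounded windows around $0$ and $t$ respectively, while $\Pi(0,t]+\lattice$ becomes uniform on $\Gamma$ and asymptotically independent of both as $t\to\infty$; three asymptotically independent $\Gamma$-valued random variables cannot satisfy a nontrivial deterministic linear relation.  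Finally, your excursion approach to \ref{factor_crit} faces the same factor-ness issue (the zero set of a centred walk depends on the choice of origin); the paper instead writes $\lambda=\sum_j a^j v^j$, uses the extracted randomness $U_n$ to split each $\Pi_i$ into independent sub-processes of the required intensities, and applies the known one-dimensional two-colour factor matching within each family type.
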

\end{samepage}

Note that \cref{T:main} (ii) and (iii) give complementary lower bounds to
\cref{T:factor} (i) and (ii): $\E^*\surd X=\infty$ and $\P^*(X>r)\geq
 e^{-c r}$ respectively.  \cref{T:main} (i) covers the
case $\lambda\notin\cone(V)$.  In the case $\lambda\in\cone(V)^\circ$, this
theorem shows that when $\lattice\neq\Z^q$ the possible tail behaviours of
$X$ change significantly when we restrict to factor matchings.

The bound $\E^* X = \infty$ in \cref{T:factor}(iii) is an extension of a
parity argument from \cite{HPPS}, and is specific to the 1-dimensional
case.  The constructions of matchings for all parts of this theorem are
much more intricate.
We believe that the dichotomy according to whether $\lattice=\Z^q$ or not is
peculiar to dimension one, so that in higher dimensions, the claims of
\cref{T:main} about the tail of $X$ hold for factor matchings as well
(perhaps with different constants). In particular, we expect that in the
underconstrained case, and also for $d>2$ in the critical case, there are factor
matchings with $\P^*(X>t)\leq Ce^{-ct^d}$, even if $\lattice\neq\Z^q$.  See
\cite{HPPS,Timar} for further results on existence and properties of factor
matchings.

\pagebreak
Here are some further examples.

\begin{enumerate}[resume]
\item {\bf Single colour.}  For a single Poisson process on $\R$, if there
  are families of only one size $a$ then $\lattice=a\Z$, and any factor
  matching (in $d=1$) has $\E^* X=\infty$.  However, if allowed family
  sizes have greatest common divisor $1$ (for example, if $V=\{(2),(3)\}$,
  so points can be matched in twos or threes) there is a factor matching
  with exponential tail.

\item {\bf Partial two-colour matching.}  Let
  $\lambda = (\lambda_1,\lambda_2)$ and $V=\{(1,1),(1,0)\}$, so a blue
  point must match to a red point, but a red point may also form a family
  on its own.  Again, $\lattice=\Z^2$.  When $d=1$, if
  $\lambda_1=\lambda_2$ then the bound $\P(X>t)<C/\surd t$ can be attained
  by a factor matching, while if $\lambda_1>\lambda_2$ then there is a
  factor matching with exponential tails.

\item {\bf Matching in pairs.}  In any setting where points are matched in
  pairs, with some constraints on which colour pairs are valid, the lattice
  is contained in the even lattice, and so is not $\Z^q$.  Thus any factor
  matching in one dimension has $\E^* X=\infty$. 
\end{enumerate}

Table~\ref{summary} summarizes the main results stated above.

\begin{table}[t]
\centering
\begin{tabu}{|c|c|[1.5pt]c|c|}
  \cline{3-4}
\multicolumn{2}{l}{} & \multicolumn{1}{|c|}{general} & factors ($d=1$)  \\
\tabucline[1.5pt]{3-4}
\cline{1-2}
\multicolumn{2}{|l|[1.5pt]}{$\lambda\notin\cone(V)$} & \multicolumn{2}{c|}{impossible [L1]} \\
\hline
\multicolumn{2}{|l|[1.5pt]}{  \multirow{2}{*}{$\lambda\in\partial\cone(V)$}  } & $d/2$ (if $d\leq 2$) \hfill [L1,U3] & \multirow{2}{*}{$1/2$\hspace{.2in} [L1,U3]} \\
 \multicolumn{2}{|l|[1.5pt]}{} & Exp (if $d\geq 3$) \hfill [U3] & \\
  \hline
\multirow{2}{*}{$\lambda\in \cone(V)^\circ$} & $\lattice\neq \Z^q$ & \multirow{2}{*}{Exp \hspace{1in} [U4]} & 1 \hfill [L2,U6] \\
\cline{2-2}\cline{4-4}
 & $\lattice = \Z^q$ & & Exp \hfill [U5] \\
\hline
\end{tabu}
\caption{A summary of the results of \cref{T:main,T:factor}, for general
$V$-matchings and factor $V$-matchings. An real number $\alpha$ indicates
that there exists a $V$-matching in which the typical family diameter $X$ has
all moments below the $\alpha$th moment finite, but none with finite
$\alpha$th moment. ``Exp" indicates that there exists a matching in which
$X^d$ has a finite exponential moment.  The arguments for the various upper
bounds (i.e.\ constructions) and lower bounds are as follows.  The lower
bounds all use extensions of arguments in \cite{HPPS}: those marked with [L1]
use mass-transport together with a ``charge function'' on colours, while [L2]
uses a modularity argument. The upper bounds [U3] use a reduction to
$2$-colour matchings, together with a Hilbert curve construction for $d\geq
2$.  The upper bound [U4] uses a natural greedy Markov matching procedure,
and again the Hilbert curve.  The constructions for [U5] and [U6] are the
most elaborate and novel: [U5] modifies the greedy construction using
randomness extracted from the point locations; and [U6] combines this method
with a multi-scale construction.
 } \label{summary}
\end{table}

\paragraph{Infinitely many types.} We now consider how the situation changes when
there are infinitely many valid family types.  This case is slightly more
delicate, particularly in the critical case. We still assume that the number
of colours is finite, since otherwise very little can be said (see the remark
below). As before, we divide our analysis into cases according to the
relation between the intensity vector $\lambda$ and $\cone(V)$.

\begin{thm}[Infinite $V$]\label{T:infinite}
  Let $(\Pi_i)_{i=1,\dots,q}$ be independent homogeneous Poisson point
  processes on $\R^d$ with respective intensities $\lambda_i\in(0,\infty)$.
  Let $V\subset \N^q$ be a (possibly infinite) set not containing every unit
  vector.  Then the clauses (i)--(iii) of \cref{T:main} hold, except that in clause
  (ii) the condition
  $\lambda\in \partial\cone(V)$ must be replaced with
  $\lambda\in\partial\cone(V) \cap \cone(V)$.
\end{thm}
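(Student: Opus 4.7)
My plan is to reduce each clause, whenever possible, to the already-proved finite-$V$ theorem \cref{T:main} by extracting a suitable finite subset $V' \subset V$, and to settle the new boundary sub-case of clause~(i) by induction on the number of colours~$q$.

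For the upper bounds in (ii) and (iii), I would first produce a finite $V' \subset V$ to which \cref{T:main} applies with the required tail; since any $V'$-matching is automatically a $V$-matching, this transfers the construction. When $\lambda \in \cone(V)^\circ$, I pick $2q$ vectors near $\lambda$ whose convex hull contains a neighbourhood of $\lambda$ (for instance $\lambda \pm \epsilon e_j$ for small $\epsilon$), express each as a finite non-negative combination of elements of $V$, and take $V'$ to be the union of the vectors used: then $\lambda \in \cone(V')^\circ$. When $\lambda \in \partial\cone(V) \cap \cone(V)$, the definition of $\cone$ directly exhibits $\lambda$ as a finite non-negative combination, giving a finite $V' \subset V$ with $\lambda \in \cone(V')$, and $\lambda$ must lie in $\partial\cone(V')$ because any interior point of $\cone(V') \subset \cone(V)$ would also be interior in $\cone(V)$. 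The lower bounds in (ii) and (iii) transfer with essentially no work: the $\E^* X^{d/2} = \infty$ bound uses only a supporting-hyperplane functional at $\lambda$, which is still available, and the $e^{-cr^d}$ bound comes from a local Poisson event that cannot be resolved into families inside a ball, independently of which family types are allowed.

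The non-existence claim in clause~(i) is the subtler part. Let $\M$ be a translation-invariant $V$-matching; the mass-transport principle produces non-negative family-type densities $\mu_v$ with $\sum_{v \in V} \mu_v v = \lambda$ coordinate-wise, the series converging because each coordinate of $\lambda$ is finite. If $\lambda \notin \overline{\cone(V)}$, a separating hyperplane yields a linear functional $f$ with $f(v) \geq 0$ on $V$ and $f(\lambda) < 0$, contradicting $f(\lambda) = \sum_v \mu_v f(v) \geq 0$. If instead $\lambda \in \partial\cone(V) \setminus \cone(V)$, then $\lambda \in \partial\overline{\cone(V)}$ (since $\cone(V)^\circ = \overline{\cone(V)}^\circ$ for convex sets), so a supporting hyperplane $f$ at $\lambda$ satisfies $f(\lambda) = 0$ and $f \geq 0$ on $V$; the same identity forces $\mu_v = 0$ whenever $f(v) > 0$. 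Hence almost every family has type in $V_0 := V \cap \ker f$, which lies in an at-most-$(q{-}1)$-dimensional linear subspace. Since $\lambda \in \ker f$ and $\lambda \notin \cone(V_0)$ (as $\cone(V_0) \subset \cone(V)$), the restricted problem is again an instance of clause~(i) with one fewer colour dimension, and I close the argument by induction on $q$: the base case $q=1$ is immediate since $\cone(V) = [0,\infty)$ for any nonempty $V \subset \N \setminus \{0\}$.

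The main technical obstacle is exactly this new sub-case $\lambda \in \partial\cone(V)\setminus\cone(V)$ of (i), which never arises when $V$ is finite because $\cone(V)$ is then automatically closed. Separation alone places $\lambda$ only in $\overline{\cone(V)}$, so the argument must descend along a supporting face to $V_0 \subset \ker f$ and strictly lower the colour dimension; verifying that the hypothesis $\lambda \notin \cone(V_0)$ is preserved under this descent --- and that the induction on $q$ therefore terminates --- is the delicate step that makes the reduction work.
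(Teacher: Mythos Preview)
Your proposal is correct and follows essentially the same route as the paper: reduce the upper bounds to a finite $V'\subset V$ (via Carath\'eodory / finitely many witnesses for a neighbourhood of $\lambda$) and invoke \cref{T:main}, note that the lower-bound arguments from \cref{s:critical_lower_bound} and the trivial $e^{-cr^d}$ bound carry over verbatim, and handle the new sub-case $\lambda\in\partial\cone(V)\setminus\cone(V)$ of clause~(i) by repeatedly passing to the kernel of a supporting functional until the allowed types are exhausted. One phrasing point: your ``induction on $q$'' is really induction on the dimension of the linear span of $V\cup\{\lambda\}$ (the number of colours never changes), which is exactly how the paper frames the descent---after at most $q$ steps the restricted type set is empty and the conclusion is immediate.
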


To clarify the difference between this and \cref{T:main}, note that when
there are infinitely many family types, it is possible that $\cone(V)$ is
not closed.  For example, with family types $V=\{(n,n+1):n\in\N\}$, the
cone is $\{0\leq x < y\}$.  Thus it is possible that
$\lambda\notin\cone(V)$ but $\lambda\in\partial\cone(V)$.  For example
with that $V$, if the two intensities are equal there is no matching.
Increasing $\lambda_2$ by an arbitrarily small amount makes
matchings possible (and indeed, the setting becomes underconstrained).

\begin{remark*}
  With infinitely many colours fairly general tail behaviours can be
  forced. For instance, for any sequence of distances $r_k$ and sequence of
  probabilities $p_k$ it is not hard to construct a set of
  intensities and a countable family of types $V$ such that any
  translation invariant $V$-matching scheme satisfies $\P^*(X>r_k) > p_k$
  (e.g.\ by having colours with very low intensity that only take part in
  very large families).
\end{remark*}

\paragraph{Matching in pairs.}
Finally, we consider the natural special case when the matching
consists only of pairs of points, with some restrictions on which colour
pairs are allowed.  In the general formulation used above,
this corresponds to having $\|v\|_1=2$ for all $v\in V$. Such a
setting can be described in terms of a graph,
possibly with self-loops.  The vertices are the colours and an edge
indicates that two points of the corresponding colours can form a pair in
the matching.  Vertices $i$ and $j$ are neighbours in the graph if and only
if $e_i + e_j\in V$ (i.e., matching points of colours $i$ and $j$ is
allowed), and then we write $i\sim j$.  Here $e_i$ is the $i$th unit vector.

In this case, we can give alternative criteria for criticality and
unsatisfiability, similar to the conditions of the K\"onig-Hall marriage
theorem.  For a set $S\subset[q]$ define $N(S)$ to be the set of its
neighbours in the graph:
\[
N(S) := \{x : \exists y\in S \text{ such that } x\sim y\}.
\]
For a set $S$ we define $\lambda(S) = \sum_{i\in S} \lambda_i$ to be the
total intensity of points with colours in $S$.  Given the intensities
$\lambda$ and the graph, a non-empty set $S\subset[q]$ is called:
\begin{itemize}[nosep]
\item {\bf deficient} if $\lambda(N(S)) < \lambda(S)$,
\item {\bf critical} if $\lambda(N(S)) = \lambda(S)$ and $S\neq N(S)$, and
\item {\bf excessive} if $\lambda(N(S)) > \lambda(S)$.
\end{itemize}

The following relates existence of deficient and critical sets to the
location of $\lambda$ w.r.t.\ $\cone(V)$.  The corresponding case of
\cref{T:main} then applies.

\begin{prop}[Matching in pairs]\label{P:pairs} 
  Fix the intensity vector $\lambda\in(0,\infty)^q$ and let $V$ and the
  graph be as above.
  \begin{enumerate}[label={\textup{(\roman*)}},nosep]
  \item If there exists a deficient set $S\subset[q]$, then
    $\lambda\notin\cone(V)$ (and there is no translation invariant
    $V$-matching scheme).
  \item If there is no deficient set, but there is a critical
    $S\subset[q]$, then $\lambda\in\partial\cone(V)$.
  \item If all non-empty subsets $S\subset[q]$ are excessive or have
    $N(S)=S$, then $\lambda\in(\cone(V))^\circ$.
  \end{enumerate}
\end{prop}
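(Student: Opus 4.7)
The plan is to characterise $\cone(V)$ via LP duality. Since $V$ consists of vectors $e_i+e_j$ (with the self-loop convention $e_i+e_i=2e_i$), $\cone(V)$ is a closed convex polyhedral cone, and Farkas' lemma gives: $\mu\in\cone(V)$ iff $y\cdot\mu\ge 0$ for every $y\in\R^q$ with $y_i+y_j\ge 0$ on each edge (in particular $y_i\ge 0$ at each self-looped vertex).

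For part (i), I would argue directly: given $\lambda=\sum_e a_e v_e$ with $a_e\ge 0$, I'd compare the contribution of each edge to $\lambda(S)$ versus $\lambda(N(S))$. For a non-self-loop $\{i,j\}$, if both endpoints lie in $S$ then both also lie in $N(S)$ (each being a neighbor of the other); if only $i\in S$ then $j\in N(S)$; for a self-loop at $i\in S$, $i\in N(S)$ automatically. In every case the contribution to $\lambda(N(S))$ is at least that to $\lambda(S)$, giving $\lambda(S)\le\lambda(N(S))$, contradicting deficiency.

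For parts (ii) and (iii) the crucial converse is: \emph{absence of a deficient set implies $\lambda\in\cone(V)$}. Given any $y$ with $y_i+y_j\ge 0$ on every edge, use the layer-cake formula
\[
\sum_i \lambda_i y_i \;=\; \int_0^\infty \bigl[\lambda(\{y>t\}) - \lambda(\{y<-t\})\bigr]\,dt.
\]
For $S_t:=\{i:y_i<-t\}$, any $j\sim i\in S_t$ has $y_j\ge -y_i>t$, so $N(S_t)\subset\{y>t\}$; and $S_t$ contains no self-looped vertex, since those satisfy $y_i\ge 0$. The hypothesis gives $\lambda(S_t)\le\lambda(N(S_t))\le\lambda(\{y>t\})$, so the integrand is nonnegative and Farkas delivers $\lambda\in\cone(V)$.

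With this converse in hand, part (ii) is finished by exhibiting a supporting hyperplane at a critical $S$: set $y_i:=\mathbf{1}_{N(S)\setminus S}(i)-\mathbf{1}_{S\setminus N(S)}(i)$. Any edge with one endpoint in $S\setminus N(S)$ has its other endpoint in $N(S)\setminus S$ (otherwise the first would lie in $N(S)$, contradiction), and on all other edges $y_i,y_j\ge 0$; so $y_i+y_j\ge 0$ everywhere. Moreover $\sum_i y_i\lambda_i=\lambda(N(S))-\lambda(S)=0$ and $y\ne 0$ since $S\ne N(S)$. Perturbing $\lambda$ in direction $-y$ exits $\cone(V)$, so $\lambda\in\partial\cone(V)$. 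Part (iii) then follows by continuity: each excessive $S$ remains non-deficient under small perturbations of $\lambda$, each $S$ with $N(S)=S$ is non-deficient for every $\lambda'$, and finiteness of the subset family shows the whole neighborhood of $\lambda$ lies in $\cone(V)$, giving $\lambda\in\cone(V)^\circ$. The main obstacle is the layer-cake implication above — setting up the right threshold comparison and correctly handling self-loops — together with finding the asymmetric $y$ in (ii) (the naive $\mathbf{1}_{N(S)}-\mathbf{1}_S$ fails on edges internal to $S$).
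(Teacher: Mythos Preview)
Your argument is correct and complete, but it takes a genuinely different route from the paper's. The paper proves the key equivalence ``no deficient set $\Leftrightarrow$ $\lambda\in\cone(V)$'' via the Max~Flow--Min~Cut theorem: it builds an auxiliary bipartite network with source $\sigma$, sink $\tau$, vertices $s_i,t_i$ for each colour, infinite-capacity edges $(s_i,t_j)$ whenever $i\sim j$, and capacity-$\lambda_i$ edges at the source and sink. A min cut of capacity $<\|\lambda\|_1$ corresponds exactly to a deficient set, while a max flow of value $\|\lambda\|_1$ yields (and is yielded by) a conic representation of $\lambda$. You instead use Farkas' lemma together with the layer-cake/threshold comparison $N(\{y<-t\})\subset\{y>t\}$, which is an elegant and more elementary substitute for the flow argument; it also makes transparent \emph{why} the deficiency condition is precisely the dual obstruction. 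For part~(ii) the paper is slightly more direct: rather than building your supporting functional $y=\mathbf{1}_{N(S)\setminus S}-\mathbf{1}_{S\setminus N(S)}$, it simply picks some $x\in S\setminus N(S)$ (nonempty since $\lambda>0$ and $S$ is critical) and observes that increasing $\lambda_x$ makes $S$ deficient, pushing $\lambda$ out of the cone. Both approaches handle~(iii) by the same finiteness/continuity argument. The paper's method is more constructive (the max flow literally gives the coefficients $a_e$) and ties into the fractional-matching literature; yours is self-contained and avoids importing network-flow machinery.
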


For instance, in Example \ref{eg:triangle} above, (three colours and the
constraint is that pairs are of distinct colours), let the intensities of
the point processes be $\lambda_1,\lambda_2,\lambda_3$, and assume without
loss of generality $\lambda_1\geq\lambda_2,\lambda_3$.  If
$\lambda_1 > \lambda_2+\lambda_3$, then $S = \{1\}$ is a deficient set.
If $\lambda_1 = \lambda_2+\lambda_3$, then $S = \{1\}$ is critical.

The main issue in this setting is existence of a perfect weighted fractional
matching in the graph where the total weight of edges at vertex $i$ is
$\lambda_i$, which holds if an only if if there are no deficient sets.
In the case $\lambda \equiv 1$
this is due to Tutte, see \cite{tutte,fgt}.

\subsection{Further notation}


Recall that the number of distinct colours is denoted by $q$; the number of
family types is $k$ and the allowed families are $\bv^1,\dots,\bv^k$. For two
vectors $x = (x_1,\dots,x_m)$ and $y = (y_1,\ldots,y_m)$ in $\R^m$ we
denote the inner product $x \cdot y := \sum_{i=1}^m x_i y_i$.  We sometimes
treat the
set $V$ as a $k\times q$ matrix with rows $\bv^i\in\N^q$ (in some arbitrary
but fixed order), allowing us to write $a V := \sum_{i=1}^k a^i \bv^i$ for
any vector $a \in \R^k$.

Recall that
\[
  \lattice = \lattice(V) := \{n V \ : \ n \in \Z^k\}
\]
is the lattice spanned by $V$, and define also the non-negative lattice
\[
\lattice_+ = \lattice_+(V) := \{n V \ : \ n \in \N^k\}.
\]
(With the convention that $\N=\{0,1,\dots\}$.)  Note that
$\lattice_+\subset \big(\lattice \cap \cone(V) \big) \subset \N^q$.
However, in general the former inclusion is strict; see \cref{fig:cone} for an example.  A vector $x$ is called {\df matchable} if $x\in\lattice_+$, since a set
containing $x_i$ points of colour $i$ can be partitioned into valid
families.

\begin{figure}[t]
  \centering
  \includegraphics[width=.6\textwidth]{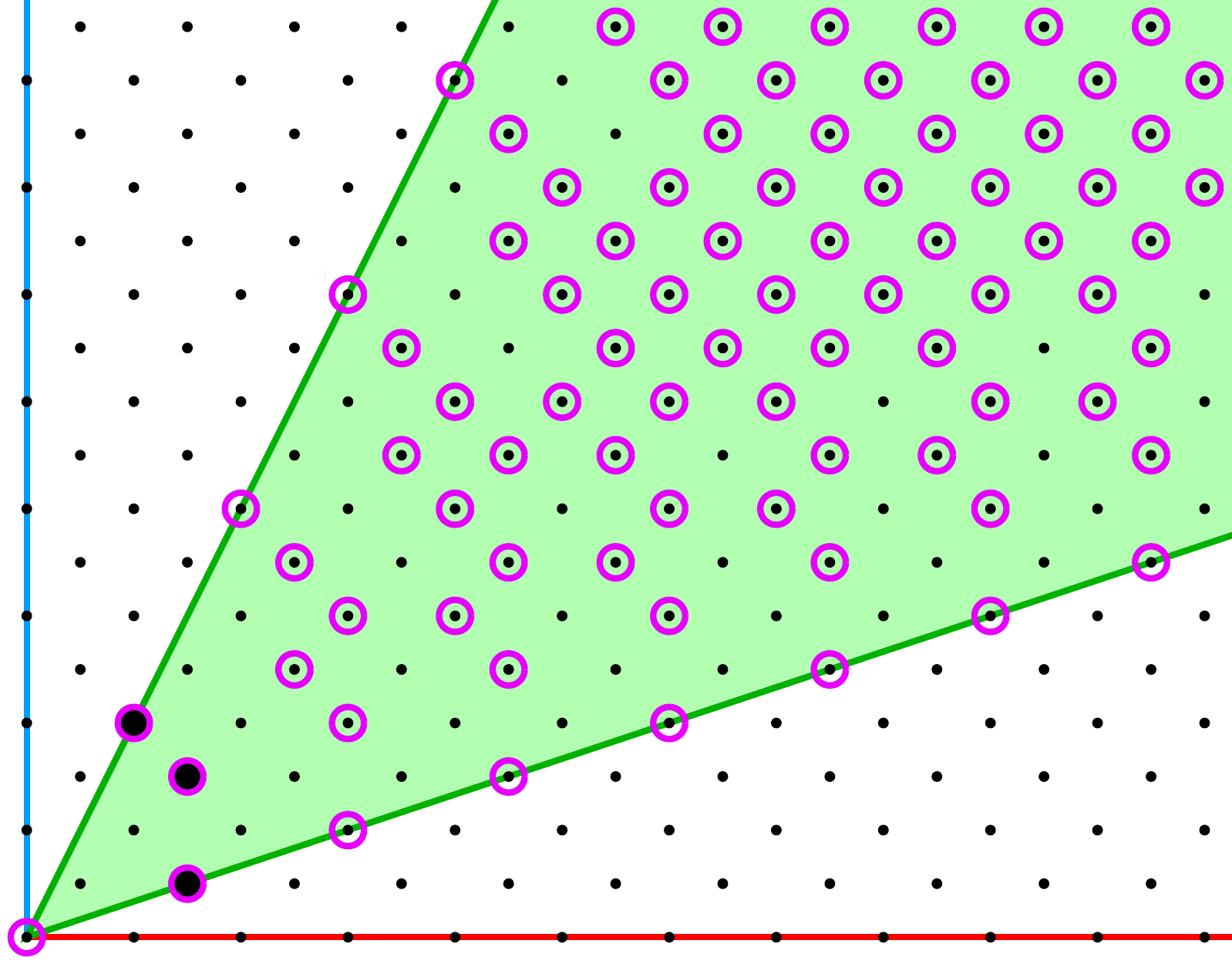}
  \caption{Key geometric objects for the matching rule
    $V=\{(3,1),(3,3),(2,4)\}$. (Families may consist of 1 red and 3 blue,
    or 3 red and 3 blue, or 4 red and 2 blue points.)  Elements of $V$ are
    marked with filled discs.  The set $\cone(V)$ is shaded: matchings with
    exponential tails are possible for intensity vectors
    $(\lambda_{\text{red}}, \lambda_{\text{blue}})$ in its interior, while
    for $d\le2$ only a power law is possible in its boundary.  The lattice
    $\lattice(V)$ is shown by the dots, and the matchable vectors
    $\lattice_+(V)$ (corresponding to sets of points that can be
    partitioned into families) are circled.  Since $\lattice\neq \Z^2$,
    factor matchings must have infinite mean family size in $d=1$, even in
    the interior of the cone.}
  \label{fig:cone}
\end{figure}

We denote by $C,c$ positive constants whose value may change from line to
line. Generally statements would hold for $c$ small enough and $C$ large
enough.

\subsection{Charge and mass transport}

As noted, the behaviours we get for matchings in general are similar to the
previously studied cases of one and two colour matchings.
A central new idea is to define a {\bf charge} function with useful properties.  We
will assign each colour $i$ a real number $\eta_i$ called the charge. We
think of charge $\eta_i$ as located on each point of colour $i$, and write
$\eta(x)=\eta_i$ for $x\in[\Pi_i]$.  We will choose $\eta$ so that the
total charge in each family is non-positive.  In the unsatisfiable case we
can do this in such a way that the average charge over space is positive,
which leads to a contradiction using the mass transport principle (see
below).  In the critical case the average charge is $0$, and conservation
is used to derive lower bounds on the tail of the matching distances. In
order to choose appropriate charges, we use hyperplane separation
 (see e.g.\ \cite[Chapter 11]{Rock}).

\begin{prop}[Hyperplane separation]\label{T:hyperplane}
  If $C,K\in\R^n$ are disjoint convex sets and $K$ is compact, then there
  exists a non-zero $\eta\in\R^n$ so that $\inf_{x\in K} (\eta\cdot x) \geq
  \sup_{y\in C} (\eta\cdot y)$.  If $C$ is closed then the inequality is
  strict.
\end{prop}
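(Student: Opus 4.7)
The plan is to reduce both statements to the familiar problem of separating a point from a convex set. Define the Minkowski difference $D := K - C = \{x-y : x\in K,\; y\in C\}$. Being the Minkowski sum of the convex sets $K$ and $-C$, it is convex, and the hypothesis $K\cap C = \emptyset$ gives $0\notin D$. Any $\eta\neq 0$ with $\eta\cdot z \geq \alpha$ for all $z\in D$ and some $\alpha\geq 0$ immediately translates into $\inf_{x\in K}\eta\cdot x - \sup_{y\in C}\eta\cdot y \geq \alpha$, which gives the non-strict or strict conclusion according to whether $\alpha=0$ or $\alpha>0$.

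For the strict statement (when $C$ is closed), I would first verify that $D$ is itself closed: if $x_n - y_n \to z$ with $x_n\in K$ and $y_n\in C$, then compactness of $K$ yields a subsequence with $x_{n_k}\to x^*\in K$, so $y_{n_k} = x_{n_k} - (x_{n_k}-y_{n_k}) \to x^*-z$, and closedness of $C$ places this limit in $C$; thus $z = x^* - (x^*-z) \in D$. Then $D$ is closed, convex, and avoids the origin, so there is a unique nearest point $p\in D$ to $0$, and I would take $\eta := p$. A standard convexity calculation (using $p + t(z-p)\in D$ for $t\in [0,1]$ together with $\|p+t(z-p)\|^2\geq\|p\|^2$ and sending $t\downarrow 0$) gives $\eta\cdot z \geq \|\eta\|^2 > 0$ for every $z\in D$, which is the strict separation with $\alpha = \|\eta\|^2$.

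For the general (non-strict) statement, the nearest-point argument does not apply directly since $D$ need not be closed and $0$ may lie in $\overline D$. The standard workaround is the classical fact that any convex set in $\R^n$ not containing the origin admits a non-zero supporting functional at $0$: if $0\notin \overline D$ one reduces to the strict case applied to $\overline D$; otherwise $0$ is a boundary point of the closed convex set $\overline D$, at which a supporting hyperplane exists by the standard finite-dimensional supporting hyperplane theorem, providing the required $\eta$ with $\eta\cdot z \geq 0$ on $\overline D \supseteq D$. This splitting into the two subcases is the main subtlety of the proof, though both subcases are entirely classical, and the result can alternatively be quoted directly from \cite[Chapter 11]{Rock}. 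The desired inequalities on $K$ and $C$ then follow as in the opening paragraph.
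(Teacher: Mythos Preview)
The paper does not give its own proof of this proposition; it is stated as a standard result with a reference to \cite[Chapter~11]{Rock} and used as a black box. Your argument via the Minkowski difference $D=K-C$, nearest-point projection for the strict case, and the supporting hyperplane theorem at $0\in\partial\overline D$ for the non-strict case is the standard textbook proof and is correct.
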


We use this for the singleton set $K=\{\lambda\}$, and $C=\cone(V)$.
Clearly for any $\eta$ and cone $C$ we have $\sup_{y\in C} (\eta\cdot y)
\in\{0,+\infty\}$; the inequality then implies that the supremum must be
$0$, so that the charge in each family is non-positive.

Another important tool is the mass-transport principle, which we use in the
measure-theoretic form below.  For background and extensions, see \cite{lyons,BS,BLPS}

\begin{lemma}[Mass transport]\label{L:CMTP}
  Let $\mu$ be a measure on $\R^d \times \R^d$ that is invariant under the
  diagonal action of translations, i.e.\ $\mu((A+x)\times(B+x)) =
  \mu(A\times B)$ for any $x\in\R^d$ and any Borel sets $A,B\subset\R^d$.
  Then $\mu(B\times\R^d) = \mu(\R^d\times B)$ for any Borel $B$.
\end{lemma}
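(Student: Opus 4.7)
The cleanest route is a change of variables. Define the measurable bijection $\phi \colon \R^d \times \R^d \to \R^d \times \R^d$ by $\phi(x,y) = (x, y-x)$, and let $\tilde\mu := \phi_* \mu$ denote the push-forward. Under $\phi$, the diagonal translation $(x,y) \mapsto (x+z, y+z)$ becomes the pure translation of the first coordinate $(u,w) \mapsto (u+z, w)$. Thus the hypothesis on $\mu$ translates to: for all Borel $A,B \subset \R^d$ and all $z \in \R^d$,
\[
\tilde\mu\bigl((A+z) \times B\bigr) = \tilde\mu(A \times B).
\]

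The plan is then to argue that $\tilde\mu = \text{Leb} \otimes \nu$ for some Borel measure $\nu$ on $\R^d$, where $\text{Leb}$ is Lebesgue measure. For each fixed $B$, the map $A \mapsto \tilde\mu(A \times B)$ is a translation-invariant Borel measure on $\R^d$; by uniqueness of Haar measure (the standard Haar argument applies once one reduces to sets $B$ on which $\tilde\mu(\,\cdot\, \times B)$ is $\sigma$-finite, which one may do by exhausting $\R^d$ by bounded Borel sets), it equals $\nu(B) \cdot \text{Leb}$ for some $\nu(B) \in [0,\infty]$. Standard monotone class reasoning then promotes $\nu$ to a Borel measure on $\R^d$ and yields the product decomposition $\tilde\mu = \text{Leb} \otimes \nu$.

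Given this decomposition, a direct computation finishes the proof. For any Borel $B \subset \R^d$,
\[
\mu(B \times \R^d) = \tilde\mu\bigl(\phi(B \times \R^d)\bigr) = \tilde\mu(B \times \R^d) = \text{Leb}(B)\,\nu(\R^d),
\]
since for $x \in B$ the variable $y - x$ ranges freely over $\R^d$. Similarly
\[
\mu(\R^d \times B) = \tilde\mu\bigl(\phi(\R^d \times B)\bigr) = \tilde\mu\bigl(\{(x,w) : w + x \in B\}\bigr) = \int_{\R^d} \text{Leb}(B - w)\,d\nu(w) = \text{Leb}(B)\,\nu(\R^d),
\]
by Fubini and translation invariance of Lebesgue measure. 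Hence the two sides agree (with the convention $0 \cdot \infty = 0$ handled in the obvious way, and both sides simultaneously infinite if $\nu(\R^d) = \infty$ and $\text{Leb}(B) > 0$).

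The only real subtlety is the $\sigma$-finiteness needed to legitimize the product decomposition and the application of Fubini; this is the main technical point to handle carefully, and it is addressed by the usual exhaustion of $\R^d$ by relatively compact Borel sets together with the Haar uniqueness argument on each piece. Everything else is bookkeeping around the change of variables $\phi$.
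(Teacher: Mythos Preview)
Your approach is correct (under the implicit local-finiteness assumption you flag) and takes a genuinely different route from the paper. The paper's proof is entirely elementary: for $B=[0,1)^d$ it sets $f(x,y)=\mu((x+B)\times(y+B))$ on $\Z^d\times\Z^d$, uses diagonal invariance to get $f(0,x)=f(-x,0)$, and sums over $x\in\Z^d$ to obtain $\mu(B\times\R^d)=\sum_x f(0,x)=\sum_x f(-x,0)=\mu(\R^d\times B)$. The same works for any cube, hence for open sets as countable disjoint unions of cubes, and then one passes to Borel sets. No change of variables, no Haar uniqueness, no Fubini --- just countable additivity and a reindexing of a non-negative sum.

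Your shear $\phi(x,y)=(x,y-x)$ together with the decomposition $\tilde\mu=\mathrm{Leb}\otimes\nu$ gives a more structural picture: you identify what diagonally invariant measures actually look like, and the identity then drops out of Fubini plus translation invariance of Lebesgue measure. This buys more information (the full form of $\mu$) at the cost of heavier tools and an explicit reliance on $\sigma$-finiteness for both the Haar step and Fubini. The paper's summing argument, by contrast, needs essentially nothing for cubes and open sets; the only place any finiteness enters either proof is the final extension from open to general Borel sets, which both approaches handle by the same monotone-class reasoning. Your ``exhaustion by relatively compact sets'' remark is the right instinct but a bit thin as written; the clean way to make it rigorous is to note that in every application in the paper $\mu$ is finite on products of bounded sets, so $A\mapsto\tilde\mu(A\times B)$ is locally finite for each bounded $B$ and Haar uniqueness applies directly, after which $\nu$ extends to all Borel sets by countable additivity.
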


In applications, $\mu$ is often taken to be the expectation of a diagonally
invariant \emph{random} measure, and then we think of $\mu(A\times B)$ as
the expected amount of mass sent from $A$ to $B$.  Then the mass transport
principle says that the total expected mass transported out of a set equals
the expected mass transported into it.

\begin{proof}
  Suppose first that $B$ is the unit cube $[0,1)^d$. Define a function on
  $\Z^d\times\Z^d$ by $f(x,y) = \mu((x+B)\times(y+B))$.  Invariance of
  $\mu$ implies that this function is invariant under the action of $\Z^d$,
  and so $f(0,x) = f(-x,0)$.  Summing this over $x\in\Z^d$ yields the claim
  for $B$ the unit cube (since all terms are non-negative, the order of
  summation can be changed.)

  Similarly, using $B=[t,t+a)^d$ and summing over $x\in(a\Z)^d$ we get the
  claim for cubes of side $a$. Unions give any open set $B$, and therefore
  also any Borel set $B$.
\end{proof}

\paragraph{Structure of the paper.}

\cref{sec.unsatisfiable,sec.underconstrained,sec.critical} contain proofs
involving the unsatisfiable, underconstrained and critical cases
respectively. \cref{T:factor} about factor matchings is proved in
\cref{s:factor}.  In \cref{sec.infinite} we prove \cref{T:infinite}
concerning the case of infinitely many allowed family types.
\cref{s:pairs} contains the proof of \cref{P:pairs} on colourful pair
matchings.  We end with some open questions in \cref{s:questions}.

\paragraph{Acknowledgment.}

This work was initiated during a UBC Probability Summer School, and
advanced while some of the authors were visiting Microsoft Research.  We
are grateful to Microsoft Research for their support.  OA is supported by
NSERC.

\section{The unsatisfiable case}
\label{sec.unsatisfiable}


\begin{proof}[Proof of \cref{T:main} (i)]
  As with most proofs based on mass transport, the key is to find a useful
  mass transport function. Given an invariant matching scheme, we show how
  to construct a mass transport that contradicts the principle. Since
  $\lambda\not\in\cone(V)$ and $\cone(V)$ is closed convex set, by
  \cref{T:hyperplane} there is a vector of charges $\eta\in\R^q$ such that
  $\eta\cdot\lambda > \sup\{\eta\cdot x : x\in\cone(V)\}$. Since $\cone(V)$
  is a cone, this supremum is in $\{0,+\infty\}$ and thus must be $0$, and
  so $\eta\cdot\lambda > 0$.  To apply the mass transport principle, it is
  convenient to work with non-negative charges.  To this end, we let
  $K = -\min_i \eta_i$. 
  By a slight abuse of notation, we let $\eta(x)=\eta_i$ for any point
  $x\in[\Pi_i]$.

  Suppose that $\M$ is a translation invariant $V$-matching, and recall
  that $\M(x)$ is the family of the matching that contains the point
  $x$. Define the translation invariant measure $\mu$ on $\R^d\times\R^d$
  by
  \[
  \mu(A\times B) = \E \sum_{x\in A\cap [\Psi]} \  \sum_{y\in\M(x)\cap B}
  \frac{K+\eta(x)}{\#\M(x)}.
  \]
  This corresponds to the mass transport in which each point $x\in[\Pi_i]$ sends
  out a total mass $K+\eta_i$ divided evenly to its family $\M(x)$, and no
  mass to points outside its family. The total mass received by a point $y$
  is $\sum_{x\in\M(y)} \frac{K+\eta(x)}{\#\M(y)} \leq K$, since the total
  $\eta$-charge in a family is non-positive.

  We apply \cref{L:CMTP} to $\mu$.  Let $B$ be a set of volume $1$. We have
  \begin{align*}
    \mu(\R^d\times B) &= \E \sum_{x\in[\Psi]} \ \sum_{y\in\M(x)\cap B}
    \frac{K+\eta(x)}{\#\M(x)} \\
    &= \E \sum_{y\in [\Psi] \cap B} \sum_{x\in\M(y)}
    \frac{K+\eta(x)}{\#\M(x)} \\
    &\leq \sum_i K \lambda_i ,
  \end{align*}
  since the inner sum on the second line is at most $K$ for any $y$.  However,
  \begin{align*}
    \mu(B\times\R^d) &= \E \sum_{x\in[\Psi]\cap B} K+\eta(x) \\
    &= \sum_i \lambda_i(K+\eta_i) > \sum_i K\lambda_i,
  \end{align*}
  since $\eta\cdot\lambda > 0$. The contradiction implies that an invariant
  $V$-matching does not exist.
\end{proof}

\section{The underconstrained case}
\label{sec.underconstrained}

While the cases of \cref{T:main} are split according to the tail behaviour
of $X$, the proofs are separate for the cases $\lambda\in\partial(\cone(V))$
and $\lambda\in\cone(V)^\circ$. We begin with the latter, forming part of
case (iii).

We first show the existence of an invariant matching scheme that gives the
desired tail bounds for the diameter of the family of a typical point. We
begin with the case of dimension $d=1$, and then use the one-dimensional case
to derive the claim for general $d$.

Assume $d=1$, and consider the process $\Pi(0,t] =
(\Pi_1(0,t], \dots, \Pi_q(0,t])$, taking values in $\N^q$ (for $t\geq0$).
Define also
\begin{equation}\label{eq:TM_def}
T = \inf\Big\{t>0 : \Pi(0,t]\in \lattice_+(V) \text{ and } \Pi(0,t]\neq 0\Big\},
\end{equation}
i.e.\ the first $t$ for which the vector $\Pi(0,t]$ is matchable
and non-zero.  The main step in the proof is the following lemma.

\begin{lemma}\label{L:TM_tail}
  Suppose $d=1$ and $\lambda\in\cone(V)^\circ$.  Then $T$ as defined in
  \eqref{eq:TM_def} has an exponential tail: there exist constants $C,c>0$
  such that $\P(T>t) < C e^{-c t}$ for any $t>0$.
\end{lemma}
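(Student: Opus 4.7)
The plan is to reduce the lemma to a large-deviation estimate for a random walk on the full-rank sublattice $\lattice \subset \Z^q$.

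First I pass to the embedded discrete walk. Let $\tau_n$ denote the $n$th jump time of $\Psi := \sum_i \Pi_i$, and let $X_n := \Pi(0,\tau_n]$, which takes step $e_i$ with probability $\lambda_i/\Lambda$ where $\Lambda := \sum_j \lambda_j$. Then $T = \tau_N$ for $N := \inf\{n \geq 1 : X_n \in \lattice_+\setminus\{0\}\}$, and a Chernoff bound on $\Psi(t)$ reduces the claim to showing $N$ has an exponential tail. Since $\lambda\in\cone(V)^\circ$ forces $V$ to span $\R^q$, the lattice $\lattice$ has full rank and $G := \Z^q/\lattice$ is a finite abelian group. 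The projection $Z_n := X_n \bmod \lattice$ is an irreducible random walk on $G$ (its generators $e_i\bmod\lattice$ span $G$), so its first return time $\sigma_1$ to $0$ has an exponential tail. Writing $0 =: \sigma_0 < \sigma_1 < \sigma_2 < \cdots$ for the successive returns and $W_k := X_{\sigma_k} - X_{\sigma_{k-1}}$, the strong Markov property makes the $W_k$ i.i.d.\ elements of $\lattice\cap \N^q$ with exponential tails, and by Wald's identity their common mean $\mu$ is a positive multiple of $\lambda$, so $\mu \in \cone(V)^\circ$. Set $S_k := W_1 + \cdots + W_k = X_{\sigma_k}$.

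The central deterministic input will be the following structural lemma: there exists $M > 0$ such that every $x \in \lattice$ satisfying $x - M\sum_i v^i \in \cone(V)$ lies in $\lattice_+$. To prove it, decompose $x - M\sum v^i = \sum s^i v^i$ with $s^i \geq 0$, so $x = \sum(M + s^i)v^i$. The integer truncation $\sum \lfloor M + s^i\rfloor v^i$ lies in $\lattice_+$, and the residual $y := \sum \{M + s^i\}v^i$ lies in the bounded set $B := \sum_i [0,1)v^i$. Since $y\in\lattice$ and $\lattice\cap B$ is finite, one fixes for each such $y$ an integer representation $y = \sum n^i(y)v^i$ (possibly with negative entries); choosing $M$ strictly larger than $\max_{y,i}|n^i(y)|$ makes $\lfloor M+s^i\rfloor + n^i(y) \geq 0$, so $x = \sum(\lfloor M+s^i\rfloor + n^i(y))v^i\in\lattice_+$.

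Writing $\cone(V) = \bigcap_j\{x : \eta_j\cdot x \geq 0\}$ for its finitely many facet normals, the complementary event $\{S_k - M\sum v^i \notin \cone(V)\}$ is a finite union of events $\{\eta_j\cdot S_k < M\eta_j\cdot\sum v^i\}$. Each $\eta_j\cdot\mu > 0$ and $W_k$ has exponential moments, so Cram\'er's theorem yields $\P(S_k - M\sum v^i \notin \cone(V)) \leq Ce^{-ck}$; combined with the structural lemma, $\P(S_k \notin \lattice_+) \leq Ce^{-ck}$. Hence the first $k$ with $S_k\in\lattice_+$ has an exponential tail, $N$ is bounded by the corresponding $\sigma_k$ and inherits an exponential tail, and Poisson concentration on $\Psi(t)$ then delivers the conclusion for $T$. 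The main obstacle is the structural lemma: because $\lattice_+$ can differ from $\lattice\cap\cone(V)$ along rays parallel to the facets of the cone (as in \cref{fig:cone}), merely being inside the cone is not sufficient, and one must use the fractional-part argument above to push uniformly into the interior by a bounded amount.
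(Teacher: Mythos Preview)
Your proof is correct and uses the same three ingredients as the paper: the finite quotient $\Z^q/\lattice$ to control when $\Pi(0,t]\in\lattice$, Poisson/Cram\'er large deviations to keep the walk deep inside $\cone(V)$, and the structural lemma that $\lattice\cap\bigl(M\sum v^i + \cone(V)\bigr)\subset\lattice_+$ (the paper's \cref{L:dual}). The only organizational difference is that you discretize to the jump chain and subsample at the return times of the quotient walk before applying large deviations, whereas the paper establishes the cone condition (for all large $s$) and the lattice condition (for some $s\in(t,2t)$) separately in continuous time and then intersects; your packaging is arguably a little cleaner, and your fractional-part proof of the structural lemma is a pleasant variant of the paper's argument via the relation subspace $W$.
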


The proof consists of three steps. We show that with high probability at
all large times, $\Pi(0,t]$ is ``well inside'' $\cone(V)$ in a certain
sense, that $\Pi(0,t]$ visits the lattice $\lattice$ regularly, and finally
that any point in $\lattice$ that is well inside the cone corresponds to a
matchable set.

We begin with a simple geometric statement.  Let $\|\cdot\|$ denote the
Euclidean norm, and $\dist(x,A)$ the Euclidean distance from the point $x$
to the set $A$.

\begin{lemma}\label{L:coneincone}
  Suppose $\lambda\in\cone(V)^\circ$. Let $\delta = \frac12\dist(\lambda,
  \cone(V)^\mathrm{c})$, and suppose $\pi\in\R^q$ satisfies $\|\pi - s\lambda\| <
  \delta s$ for some $s>0$, and let $\xi\in\R^q$. Then $s > \|\xi\|/\delta$
  implies $\pi \in \xi+\cone(V)$ (the translated cone).
\end{lemma}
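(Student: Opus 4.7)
The plan is to show that $\pi - \xi$ lies in $\cone(V)$, which gives $\pi \in \xi + \cone(V)$ directly. The natural route is to use that $\lambda$ lies in the interior of the cone with a quantitative margin $2\delta$, then exploit that $\cone(V)$ is closed under positive scaling to get an enlarged ball around $s\lambda$ inside the cone, and finally apply the triangle inequality to bound how far $\pi - \xi$ is from $s\lambda$.

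First I would record the geometric input: by definition of $\delta$, the open Euclidean ball $B(\lambda, 2\delta)$ is disjoint from $\cone(V)^{\mathrm{c}}$, hence contained in $\cone(V)$. Since $\cone(V)$ is a cone (closed under multiplication by non-negative scalars), scaling by $s > 0$ yields $B(s\lambda, 2\delta s) \subset \cone(V)$.

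Next I would use the triangle inequality:
\[
\|(\pi - \xi) - s\lambda\| \le \|\pi - s\lambda\| + \|\xi\| < \delta s + \|\xi\|.
\]
Under the hypothesis $s > \|\xi\|/\delta$, we have $\|\xi\| < \delta s$, so $\|(\pi - \xi) - s\lambda\| < 2\delta s$. Therefore $\pi - \xi \in B(s\lambda, 2\delta s) \subset \cone(V)$, which is exactly the claim.

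This is essentially a one-line computation, so there is no real obstacle; the only thing worth double-checking is that $\cone(V)$ is genuinely closed under scaling by $s > 0$ (immediate from its definition as a non-negative combination of the $\bv^i$) and that the initial ball $B(\lambda, 2\delta)$ really sits inside the cone, which is the content of the definition of $\delta$ combined with $\lambda \in \cone(V)^\circ$.
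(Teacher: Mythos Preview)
Your proof is correct and follows essentially the same approach as the paper: both bound $\|(\pi-\xi)-s\lambda\|$ by the triangle inequality and then use that the open ball of radius $2\delta s$ about $s\lambda$ lies in $\cone(V)$ (the paper phrases this last step as ``by linearity, $2\delta s = \dist(s\lambda,\cone(V)^{\mathrm c})$'', which is exactly your scaling argument). Your write-up is slightly more explicit about why scaling preserves the cone, but the argument is the same.
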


\begin{proof}
  We have $\|(\pi-\xi)-s\lambda\| \leq \|\xi\| + \|\pi-s\lambda\| < 2\delta
  s$. By linearity, $2\delta s = \dist(s\lambda, \cone(V)^c)$, hence
  $\pi-\xi\in\cone(V)$.
\end{proof}


For $\alpha>0$ let
\[
\cone_\alpha(V):=\left\{\sum a^i \bv^i : a \in [\alpha,\infty)^k \right\}.
\]
Clearly $\cone_\alpha(V)\subset\cone(V)$ and is just a translation of the
cone, since $\cone_\alpha(V) = \cone(V) + \alpha \sum \bv^i$.

\begin{lemma}\label{L:in_cone_c}
  For any $V,\lambda$, satisfying $\lambda\in\cone(V)^\circ$, and any
  $\alpha>0$ there exist $C,c>0$ such that for any $t$,
  \begin{equation}
    \P\Bigl( \forall s>t : \Pi(0,s] \in \cone_\alpha(V) \Bigr)
    \geq 1 - C e^{-c t}
  \end{equation}
\end{lemma}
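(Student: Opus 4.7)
The plan is to combine \cref{L:coneincone} with standard Poisson concentration and a union bound over a time discretization. Set $\delta = \tfrac{1}{2}\dist(\lambda,\cone(V)^c)>0$ and $\xi = \alpha\sum_i \bv^i$. By \cref{L:coneincone}, whenever $s > \|\xi\|/\delta$ and $\|\Pi(0,s]-s\lambda\|<\delta s$, we have $\Pi(0,s]\in\cone_\alpha(V)$. Thus it suffices to show that with probability at least $1-Ce^{-ct}$, this Euclidean deviation bound holds for all $s$ beyond a threshold; we may assume $t\geq\|\xi\|/\delta$ throughout, absorbing the small-$t$ case into the constant $C$.

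For a single value of $s$, the vector $\Pi(0,s]$ has independent Poisson coordinates with means $\lambda_i s$, so the standard Chernoff bound for the Poisson distribution gives $\P(\|\Pi(0,s]-s\lambda\|\geq (\delta/3)s)\leq C e^{-cs}$ for suitable $c,C>0$. To pass from a single $s$ to all $s>t$ simultaneously, I would discretize time at integer scale. For each integer $n\geq\lceil t\rceil$ let
\[
E_n := \bigl\{\|\Pi(0,n]-n\lambda\| < (\delta/3) n\bigr\},\qquad F_n := \bigl\{\Psi(n,n+1] \leq (\delta/3) n\bigr\},
\]
where $\Psi = \sum_i\Pi_i$. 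The increment $\Psi(n,n+1]$ is Poisson with bounded mean $\sum_i\lambda_i$, so $\P(F_n^c)$ decays super-exponentially in $n$, while $\P(E_n^c)\leq Ce^{-cn}$ by the Chernoff bound.

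On $E_n\cap F_n$, for every $s\in[n,n+1]$ the triangle inequality gives
\[
\|\Pi(0,s]-s\lambda\| \leq \|\Pi(0,n]-n\lambda\| + \Psi(n,s] + (s-n)\|\lambda\| < (\delta/3) n + (\delta/3) n + \|\lambda\|,
\]
which is strictly less than $\delta s$ once $n$ exceeds a constant depending only on $\lambda$ and $\delta$. Combined with $s\geq t\geq\|\xi\|/\delta$, \cref{L:coneincone} yields $\Pi(0,s]\in\cone_\alpha(V)$. Union-bounding over integers $n\geq\lceil t\rceil$ then gives
\[
\P\bigl(\exists s>t:\Pi(0,s]\notin\cone_\alpha(V)\bigr) \leq \sum_{n\geq\lceil t\rceil}\bigl(\P(E_n^c)+\P(F_n^c)\bigr) \leq Ce^{-ct},
\]
after adjusting $C$ to absorb the finitely many small-$n$ terms where the triangle-inequality argument does not yet deliver $<\delta s$.

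The main obstacle here is purely bookkeeping of constants — choosing the concentration radius ($\delta/3$ versus $\delta$) and the integer cutoff so that the triangle inequality on each unit interval closes at $<\delta s$ rather than $\leq\delta s+O(1)$. There is no conceptual difficulty, since Poisson concentration plus a unit-scale union bound is tailor-made for upgrading pointwise deviation estimates to uniform-in-$s$ statements once the geometric input from \cref{L:coneincone} is available.
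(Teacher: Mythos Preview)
Your proof is correct and follows essentially the same approach as the paper: Poisson large-deviation bounds at integer times, a union bound over $n\geq\lceil t\rceil$, interpolation to all real $s$, and then \cref{L:coneincone} with $\xi=\alpha\sum_i\bv^i$. The only cosmetic difference is the interpolation step---the paper exploits monotonicity of $s\mapsto\Pi_i(0,s]$ to pass from integers to all $s$, whereas you control the unit-interval increment $\Psi(n,n+1]$ via an extra tail bound---but both are routine and the overall structure is the same.
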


\begin{proof}
  Fix $\eps>0$. For any given $t$ we have
  \[
  \P\Big(|\Pi_i(0,t] - t\lambda_i| \geq \eps t\Big) \leq C e^{-c t},
  \]
  where $c,C$ depend only on $\eps,\lambda_i$.  A union bound shows that
  \[
  \P\Big(\forall \text{ integers } n>t : |\Pi_i(0,n] - n\lambda_i| < \eps n
  \Big) \geq 1-C e^{-c t},
  \]
  where only $C$ has changed. Since $\Pi_i(0,s]$ is monotone in $s$, as
  long as $t>\lambda_i/\eps$ it follows that
  \[
  \P\Big(\forall s>t : |\Pi_i(0,s] - s\lambda_i| < 2\eps s\Big) \geq 1-C
  e^{-c t},
  \]
  and by changing $C$ again, this holds for all $t$. Since this holds for
  each of $d$ coordinates, we get
  \begin{equation}\label{eq:Poisson_LD}
    \P\Big( \forall s>t : \|\Pi(0,s] - s\lambda\| < 2d\eps s\Big)
    > 1 - C e^{-c t}.
  \end{equation}

  Apply \eqref{eq:Poisson_LD} with $\eps = \frac1{4d} \dist( \lambda,
  \cone(V)^c)$, and let $\xi = \alpha\sum \bv^i$, so that $\cone_\alpha(V) =
  \xi + \cone(V)$. If $t>\|\xi\|/\delta$, then by \cref{L:coneincone} with
  exponentially high probability (in $t$), for all $s>t$ we have
  $\Pi(0,s]\in\cone_\alpha(V)$.

  This completes the proof for $t>\|\xi\|/\delta$. By adjusting $C$ we get
  the result for smaller $t$.
\end{proof}

\begin{lemma}\label{L:hit_lattice}
  Assume $\lambda\in\cone(V)^\circ$. Then for some constants and all $t$,
  \begin{equation}\label{eq:in_lambda}
    \P\Big(\exists s\in(t,2t) \text{ such that } \Pi(0,s] \in \lattice \Big)
    > 1 - C e^{-c t}.
  \end{equation}
\end{lemma}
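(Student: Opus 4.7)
The plan is to reduce the statement to a question about an irreducible continuous-time Markov chain on a finite quotient group. The hypothesis $\lambda \in \cone(V)^\circ$ implies that $\cone(V)$ has non-empty interior in $\R^q$, so $V$ spans $\R^q$ linearly. Consequently the lattice $\lattice = \lattice(V)$ has full rank $q$ inside $\Z^q$, and $G := \Z^q/\lattice$ is a finite abelian group. Writing $\pi \colon \Z^q \to G$ for the projection and setting $Y_s := \pi(\Pi(0,s])$, the condition $\Pi(0,s]\in\lattice$ becomes $Y_s = 0$, so the lemma reduces to showing that $Y$ visits $0$ at some time in $(t,2t)$ with probability at least $1 - C e^{-c t}$.

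The process $Y$ is a continuous-time Markov chain on $G$ whose transitions are jumps by $\pi(e_i)$ at rate $\lambda_i$, where $e_i$ denotes the $i$-th standard basis vector. The first real step is to verify irreducibility of $Y$. This deserves some care because the underlying walk on $\Z^q$ only has forward steps, and so $Y$ only makes forward jumps in $G$. However, finiteness of $G$ resolves this cleanly: since $e_1,\dots,e_q$ generate $\Z^q$, the images $\pi(e_1),\dots,\pi(e_q)$ generate $G$ as a group, and each has some finite order $m_i$, so $-\pi(e_i) = (m_i - 1)\pi(e_i)$ is reachable by forward jumps alone. Hence every element of $G$ is a non-negative integer combination of the $\pi(e_i)$, and $Y$ can move between any pair of states.

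Once irreducibility is in hand, standard finite Markov chain theory supplies the rest. For any irreducible continuous-time chain on a finite state space, the hitting time $\tau_0$ of any fixed state has a uniform exponential tail,
\[
\max_{g \in G} \P_g(\tau_0 \geq u) \leq C e^{-c u},
\]
which can be obtained by bounding the expected hitting times uniformly and iterating the Markov property. Applying the Markov property at the deterministic time $t$ and conditioning on the value of $Y_t$ then yields
\[
\P\bigl(Y_s \neq 0 \text{ for all } s \in (t,2t)\bigr) \leq \max_{g\in G} \P_g(\tau_0 \geq t) \leq C e^{-c t},
\]
which is the desired bound.

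The only step I expect to require real thought is the irreducibility observation for $Y$, where the forward-only nature of the walk could a priori be an obstruction and is overcome only by finiteness of the quotient; everything else is a standard reduction to finite Markov chain estimates, in the same spirit as the use of large deviations in the proof of \cref{L:in_cone_c}.
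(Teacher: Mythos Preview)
Your proposal is correct and follows essentially the same approach as the paper: pass to the quotient $\Z^q/\lattice$, observe that it is finite because $\lattice$ has full rank, note that $\Pi(0,s]$ projects to an irreducible continuous-time random walk on this finite group, and conclude exponential tails for the hitting time of $0$. Your treatment is in fact slightly more detailed than the paper's (which dispatches irreducibility in a clause and leaves the Markov-property-at-time-$t$ step implicit), but the argument is the same.
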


\begin{proof}
  Since $\cone(V)$ has a non-empty interior, $V$ contains a basis for
  $\R^q$. (This is actually all we need to know about $\lambda$ and $V$ for
  this lemma.)
  Hence the lattice $\lattice$ has full dimension $q$, and so the quotient
  $\Z^q/\lattice$ is a finite group. Identifying $\Pi(0,t]$ with its coset
  $\Pi(0,t]+\lattice$, we see that the process $\{\Pi(0,t]\}_{t\geq0}$ is a
  continuous time random walk on a finite group. It is irreducible since
  the possible jumps include adding a single point of any colour, and so
  generate $\Z^q$ and its quotients. Thus the probability of avoiding the
  $0$ coset for time $t$ is exponentially small.
\end{proof}

Combining \cref{L:hit_lattice,L:in_cone_c} we have proved:

\begin{coro}\label{C:T_alpha}
  Let $T_\alpha := \min\{t\geq 0 : x(t)\in \cone_\alpha(V) \cap \lattice\}$,
  then there are $c,C$ depending only on $V$, $\lambda$ and $\alpha$ such
  that $\P(T_\alpha > t) < C e^{-c t}$.
\end{coro}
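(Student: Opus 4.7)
The plan is to derive this corollary by a direct union bound on the events already furnished by the two preceding lemmas, so the work is essentially organizational. Concretely, I would fix $t>0$ and introduce two events: let $E_1$ be the event that $\Pi(0,s]\in\cone_\alpha(V)$ for every $s>t/2$, and let $E_2$ be the event that there exists $s\in(t/2,t)$ with $\Pi(0,s]\in\lattice$. By \cref{L:in_cone_c} applied at time $t/2$, we have $\P(E_1^c)\leq C e^{-c t/2}$, and by \cref{L:hit_lattice} applied at time $t/2$, we have $\P(E_2^c)\leq Ce^{-c t/2}$, with constants depending only on $V$, $\lambda$, and $\alpha$.

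The crux of the argument is then the observation that the two windows are compatible: any $s\in(t/2,t)$ that witnesses $E_2$ automatically lies in the range $s>t/2$ on which $E_1$ forces $\Pi(0,s]\in\cone_\alpha(V)$. Hence on $E_1\cap E_2$ there is a single time $s<t$ for which $\Pi(0,s]$ lies in both $\cone_\alpha(V)$ and $\lattice$, which means $T_\alpha\leq s<t$. A union bound gives
\[
\P(T_\alpha>t)\leq \P(E_1^c)+\P(E_2^c)\leq 2Ce^{-ct/2},
\]
which is of the claimed form after relabelling the constants.

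I do not expect any substantive obstacle here, since the exponential decay is already built into both input lemmas; the only minor point is making sure the time windows in the two lemmas overlap, which is handled automatically by the $t/2$ split. One should also note that the case of very small $t$ requires no separate argument, as the bound becomes trivial once $C$ is chosen large enough.
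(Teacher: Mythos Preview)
Your proposal is correct and is exactly the combination of \cref{L:in_cone_c} and \cref{L:hit_lattice} that the paper intends; the paper itself gives no further details beyond stating that the corollary follows from these two lemmas. Your explicit $t/2$ split making the two time windows overlap is the natural way to spell this out.
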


The last ingredient for \cref{L:TM_tail} is the following lemma.

\begin{lemma}\label{L:dual}
  Assume $\lambda\in\cone(V)^\circ$, then there exists $\alpha>0$ for which
  \[
  \lattice \cap \cone_\alpha(V) \subset \lattice_+.
  \]
\end{lemma}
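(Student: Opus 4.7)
The plan is to exploit the representation theory of the matrix $V$. Given $x\in\lattice\cap\cone_\alpha(V)$, by the definitions there exist $n\in\Z^k$ and $a\in[\alpha,\infty)^k$ with $nV=aV=x$. The goal is to find $m\in\N^k$ with $mV=x$. The difference $a-n$ lies in the real kernel $\ker:=\{b\in\R^k:bV=0\}$ of the linear map $b\mapsto bV$ from $\R^k$ to $\R^q$, so it suffices to perturb $n$ by an element of the integer kernel $K_\Z:=\ker\cap\Z^k$ to push it into $\N^k$.

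First I would argue that $K_\Z$ is a lattice of full rank inside $\ker$. This uses the fact that the rows $\bv^i$ of $V$ have integer (in particular rational) entries, so Gaussian elimination produces a basis of $\ker$ consisting of rational vectors; clearing denominators yields a basis of $\ker$ lying in $\Z^k$. Consequently $\ker/K_\Z$ is a compact torus, so there is a finite constant
\[
D:=\sup_{y\in\ker}\inf_{m'\in K_\Z}\|y-m'\|_\infty,
\]
depending only on $V$.

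I would then set $\alpha:=D$. Given $x$ as above, choose $m'\in K_\Z$ with $\|(a-n)-m'\|_\infty\le D$, and put $m:=n+m'\in\Z^k$. Then $mV=nV+m'V=x+0=x$. Writing $m=a+\bigl(m'-(a-n)\bigr)$, each coordinate satisfies $m^i\geq a^i-D\geq \alpha-D=0$, so $m\in\N^k$ and hence $x=mV\in\lattice_+$.

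The main obstacle is the full-rank claim for $K_\Z$ in $\ker$; everything else reduces to a standard compactness/covering-radius argument on the torus $\ker/K_\Z$. Once the rationality of $V$ is invoked to secure that claim, the choice $\alpha=D$ (or any larger value) makes the perturbation argument go through without further effort.
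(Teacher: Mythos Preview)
Your argument is correct and follows essentially the same route as the paper's proof: both identify the kernel $W=\{b\in\R^k:bV=0\}$, observe that the rationality of $V$ forces $W\cap\Z^k$ to be a full-rank lattice in $W$, and then use a covering-radius bound to shift the integer representation $n$ by an element of $W\cap\Z^k$ so that it lands coordinate-wise close to $a$ and hence in $\N^k$. The only cosmetic differences are that the paper works in the Euclidean norm and makes the covering radius explicit by rounding coordinates in a fixed basis $\ell_1,\dots,\ell_{k-q}$ (setting $\alpha=\sum\|\ell_i\|$), whereas you invoke compactness of the torus $W/(W\cap\Z^k)$ and use $\|\cdot\|_\infty$.
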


Thus if a vector can be represented as a combination of vectors of $V$
with sufficiently large coefficients, and can also be represented using
integer coefficients then it can be represented using positive integer
coefficients. Recall that we use superscripts for indices of family types.

\begin{proof}
  For any $\alpha$, and $x\in\R^q$, suppose $x \in \lattice \cap
  \cone_\alpha(v)$. Then $x = \sum n^i\bv^i $ for some integer vector
  $n=(n^1,\dots,n^k)$ and also $x = \sum a^i \bv^i$ for a vector $a$ with
  $\min_i a^i \geq \alpha$. In particular $\sum (a-n)^i \bv^i = 0$. Thus
  we consider the subspace $W \subset \R^k$ of linear relations between the
  elements of $V$, namely $W:=\{b\in\R^k : \sum b^i\bv^i = 0\}$.  Let
  $\lattice^* :=
  W \cap \Z^k$ be the dual lattice of integer vectors in $W$. If $W =
  \{0\}$ then there is a unique way to write each vector $x\in \R^q$ as a
  linear combination of vectors in $V$ and thus $n=a$ and in particular
  $n^i\geq\alpha$. In this case the lemma holds with any $\alpha>0$. We
  assume therefore that $\dim W > 0$.


  We now show that any point in $W$ (and in particular $n-a$) is within
  bounded distance from $\lattice^*$. Since the vectors $\bv^i$ have integer
  coordinates, and since a set of integer vectors is linearly independent
  over the reals if and only if they are linearly independent over the
  rationals (or equivalently, over the integers), $\lattice^*$ contains a
  basis for $W$, which we denote $\ell_1,\dots,\ell_{k-q}$.  We now fix our
  $\alpha$ to be $\alpha=\sum \|\ell_i\|$. Any $w\in W$ we can written in this
  basis as $w = \sum b_i\ell_i$. Let $w' = \sum \lfloor b_i\rfloor \ell_i$,
  then $w'\in\lattice^*$, and $\|w-w'\| \leq \sum\|\ell_i\| = \alpha$.

  Apply the above to $w=a-n\in W$.  Then $w'\in W$, so $x = \sum(n+w')^i
  \bv^i$ is an integer combination of the $\bv^i$s.  Moreover, since
  $\|a-(n+w')\| = \|w-w'\|\leq\alpha$ we find $(n+w')^i \geq a^i - \alpha
  \geq 0$. In particular, $x \in \lattice_+$.
\end{proof}

\begin{proof}[Proof of \cref{L:TM_tail}]
  This follows from \cref{C:T_alpha,L:dual}.
\end{proof}

\begin{prop}\label{P:d1_super}
  If $\lambda\in\cone(V)^\circ$ then there exists a translation invariant
  matching scheme on $\R$ such that $\P^*(X>r) < e^{-c r}$ for some $c>0$.
\end{prop}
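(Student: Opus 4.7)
The plan is to decompose $\R$ into a translation-invariant sequence of blocks, each containing a matchable multiset of Poisson points and having exponentially-tailed length; within each block the points can then be freely partitioned into valid $V$-families, and the typical family diameter will be bounded by its enclosing block length.

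First, on the half-line $[0,\infty)$, I would iteratively define renewal times $0 = T_0 < T_1 < T_2 < \cdots$ by setting $T_i - T_{i-1}$ to be the stopping time from \eqref{eq:TM_def} applied to the shifted Poisson process $\Pi(T_{i-1}+\,\cdot\,)$. By the strong Markov property the gaps $T_i - T_{i-1}$ are i.i.d., and by \cref{L:TM_tail} they have exponential tails. Within each block $(T_{i-1}, T_i]$ the type vector $\Pi(T_{i-1}, T_i]$ lies in $\lattice_+\setminus\{0\}$ by definition of $T$, so the points there can be partitioned into families of types in $V$ by any fixed deterministic rule, producing a valid $V$-matching on $[0,\infty)$.

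Next I would upgrade this half-line construction to a translation-invariant scheme on $\R$. Running the same construction from starting point $-n$ for each $n\in\N$ produces a block partition of $[-n,\infty)$; by weak-$*$ compactness (the joint law of $(\Pi,\M)$ restricted to any fixed window is tight) I can extract a subsequential limit that is invariant under integer translations, since translating $\Pi$ by $k\in\Z$ corresponds to shifting the starting point from $-n$ to $-n-k$, which for $n$ large does not affect the restriction to a fixed window. Averaging over an independent $\mathrm{Uniform}[0,1)$ phase shift then gives invariance under all real translations; this external randomness is permitted because the theorem only requires translation invariance, not a factor construction. Under the resulting stationary law the block lengths are i.i.d.\ and the block containing a typical point has the size-biased distribution $L^*$, which still has an exponential tail; since any family is contained in its block, $X \leq L^*$, giving $\P^*(X>r) \leq Ce^{-cr}$.

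The hard part will be the translation-invariant extension: the forward-renewal sequences started from different origins need not coalesce --- for instance, in the 1-colour pair-matching problem, two origins between consecutive Poisson points produce matchings of opposite parity --- so there is no canonical factor block structure on $\R$. The subsequential-limit-plus-phase-shift argument circumvents this by exploiting that only translation invariance (not a factor property) is required here; a factor construction would need a more delicate symmetry-breaking mechanism (as is indeed handled separately in the treatment of \cref{T:factor}).
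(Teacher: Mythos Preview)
Your proposal and the paper's proof share the same core idea: use the stopping time $T$ from \cref{L:TM_tail} to build a renewal-type block structure with exponentially tailed block lengths, then pass to a stationary version.  The technical route to stationarity differs.  The paper observes that the ``current population vector'' $\Pi(T_{i-1},t]$ is a positive recurrent continuous-time Markov chain on $\N^q$ (return times to $0$ are exactly the increments $T_i-T_{i-1}$, which have exponential tails by \cref{L:TM_tail}); it then simply takes the doubly infinite stationary version of this chain, from which both the Poisson processes and the block endpoints can be read off.  This gives translation invariance in one step with no limiting procedure.

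Your compactness argument is workable but the justification as written has a gap.  A \emph{subsequential} limit along $(n_j)$ need not be integer-translation invariant merely because shifting $\Pi$ by $k$ turns $\M_{n_j}$ into $\M_{n_j+k}$: the limits along $(n_j)$ and $(n_j+k)$ could differ.  You need either a Ces\`aro average before passing to the limit, or the stronger statement that the full sequence of laws converges --- which is in fact true here, since the block structure started at $-n$ is a delayed renewal process converging (in law, on any bounded window) to the stationary renewal process by the classical renewal theorem.  Your own parity example shows why ``for $n$ large does not affect the restriction to a fixed window'' is false at the level of realisations; it is only the \emph{law} that stabilises.  Once you make this precise, the rest of your argument (size-biasing, phase randomisation, $X$ bounded by the block length) goes through.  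The paper's direct stationary-Markov-chain construction is simply a cleaner way to obtain the same stationary object without invoking a limit.
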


Roughly, we search in a greedy manner for intervals containing matchable
sets of points, and partition points in each such interval to valid
families in an arbitrary manner.  The
resulting construction depends on a starting point $0$. From this we
construct a translation invariant matching by considering a stationary
version of a related Markov chain.

\begin{proof}
  Consider the following continuous time Markov chain on $\N^q$.  At rate
  $\lambda_i$ increase the $i$th coordinate.  If the resulting state is in
  $\lattice_+$, jump immediately to $0$.  This corresponds to accumulating
  points along $\R$.  The state gives the number of unmatched points of
  each colour. As soon as it is possible to match {\em all} points yet
  unmatched, we match them and the state reverts to $0$.

  By \cref{L:TM_tail} the time to return to $0$ after leaving it has an
  exponential tail, and thus the Markov chain is positive recurrent, and
  has a stationary distribution. We now use a stationary version of this
  Markov chain in order to construct our matching.

  If the chain moves from
  $v$ to $v+e_i$ at time $t$ then we have a point of colour $i$ at position
  $t$. There is a slight complication since when the chain jumps to state
  $0$ we might not be able to determine from the trajectory of the Markov
  chain what colour of point has just arrived. We could resolve this by
  additional randomness, but instead let us modify the state space to $\N^q
  \times \{1,\dots,q\}$, and use the second coordinate to record the index
  of the last coordinate changed.
  Clearly positive recurrence is maintained, and the
  trajectory of the Markov chain at stationarity determines a Poisson
  processes $\Pi_i$. The times $(T_i)$ at which the chain jumps to
  $0\times\{1,\dots,q\}$ partition $\R$ in a stationary way into intervals
  $(T_i,T_{i+1}]$ so that the points in each interval form a matchable set.
  We can now fix an arbitrary way of matching the points in each of these
  intervals and we are done.
\end{proof}

This concludes the proof of the upper bound for the $d=1$ case. In order to
extend our analysis to $d\geq 2$, we use a dimension reduction trick. A
similar trick has been used in \cite{h-liggett}. The key is to make use of a
suitable random isomorphism between the measure spaces $\R$ and $\R^d$ and
appeal to the $d=1$ case proved above.


\begin{lemma}[\cite{h-liggett}]\label{L:spacefill}
  There exists a random directed graph $H$ with vertex set $\Z^d$
  and only nearest-neighbour edges, with the following properties.
  \begin{enumerate}[label={\textup{(\roman*)}},nosep]
  \item $H$ is almost surely a directed bi-infinite path spanning $\Z^d$.
  \item $H$ is invariant in law under translations of $\Z^d$.
  \item There exists $C=C(d)\in(0,\infty)$ such that for any $x,y\in\Z^d$
    we have almost surely $\|x-y\|^d \leq C d_H(x,y)$, where $d_H$ denotes
    the graph distance along the path $H$.
  \end{enumerate}
\end{lemma}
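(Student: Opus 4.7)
The plan is to build $H$ as a random translation-invariant version of a discrete space-filling (Hilbert) curve on $\Z^d$. The point is that the classical Hilbert curve on $\{0,\dots,2^n-1\}^d$ is a Hamiltonian path using only nearest-neighbour edges, and it satisfies the quasi-isometry property that two vertices at graph distance $k$ are at Euclidean distance at most $C k^{1/d}$, where $C$ depends only on $d$. Rearranging gives exactly the inequality in (iii). Once such a path is constructed on arbitrarily large cubes in a consistent, nested way, a standard random-origin trick promotes it to a translation-invariant object on all of $\Z^d$.

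The first step is the deterministic combinatorial construction. For each $n \geq 0$, I would define a Hamiltonian path $P_n$ on the cube $Q_n = \{0,\dots,2^n-1\}^d$ using only nearest-neighbour edges, built recursively: $Q_{n+1}$ decomposes into $2^d$ translates of $Q_n$, and one traverses each sub-cube using (a reflected/rotated copy of) $P_n$, splicing together the $2^d$ sub-paths along a fixed Gray-code-like ordering of the sub-cubes so that consecutive endpoints are nearest neighbours. One arranges the recursion so that $P_{n+1}$ contains $P_n$ as an initial sub-path on the sub-cube $Q_n \subset Q_{n+1}$, which forces consistency across scales. A standard induction shows the quasi-isometry bound $\|x-y\|^d \leq C\, d_{P_n}(x,y)$ uniformly in $n$: the curve's structure forces two vertices within Euclidean distance $r$ to lie in a constant number of sub-cubes of side $O(r)$, each of which contributes $O(r^d)$ to the path distance.

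The second step is randomization. I pick a random offset $\tau \in \Z^d$ defined by choosing, independently for each coordinate and each bit position $j \geq 0$, a uniformly random bit, so that $\tau \bmod 2^n$ is uniform on $\{0,\dots,2^n-1\}^d$ for every $n$. Set $B_n := -\tau + Q_n$ modulo $2^n \Z^d$ in each coordinate; then $B_n \subset B_{n+1}$ a.s., and every $x \in \Z^d$ lies in $B_n$ for all sufficiently large $n$. Place a translated copy of $P_n$ on each $B_n$, consistent across $n$ by the nesting built in the previous step. The union of these directed edges defines $H$. Invariance in law under translations of $\Z^d$ follows from the fact that translating $\tau$ induces a measure-preserving shift on the bit sequences; and since $B_n \uparrow \Z^d$ with probability one, $H$ is a.s.\ a bi-infinite directed path spanning $\Z^d$, verifying (i) and (ii). For (iii), given $x,y\in\Z^d$ with $\|x-y\|=r$, almost surely both lie in $B_n$ for some $n$ with $2^n \leq C' r$ (a single doubling suffices once we are at the right scale), and then the deterministic bound on $P_n$ gives $d_H(x,y) \leq d_{B_n}(x,y) \leq C \|x-y\|^d$.

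The main obstacle is the combinatorial construction of $P_n$ itself in general dimension $d$: writing down an explicit nearest-neighbour Hamiltonian path on $Q_n$ that is both self-similar (so that the infinite nested limit exists) and quasi-isometric with the sharp exponent $1/d$ requires care in stitching the $2^d$ sub-cube paths together in the right order and orientation. Everything else (the 2-adic random offset, translation invariance, passing to the limit on $\Z^d$) is routine once this is in hand, and indeed this is precisely the construction used in \cite{h-liggett}, so I would quote it rather than redo it in detail.
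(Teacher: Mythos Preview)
Your overall plan---a Hilbert-type space-filling curve on dyadic cubes, promoted to $\Z^d$ via a random $2$-adic offset---is exactly the construction the paper invokes; indeed the paper does not prove this lemma at all but simply cites \cite[Proposition~5]{h-liggett} and remarks that it is ``based on taking a random translation of a $d$-dimensional Hilbert curve.''  So your decision to quote that reference is precisely what the paper does.

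That said, your sketch of the argument for part~(iii) has the inequality running the wrong way.  The lemma asserts $\|x-y\|^d \leq C\, d_H(x,y)$, i.e.\ a \emph{lower} bound on the path distance in terms of Euclidean distance; equivalently, points at path distance $k$ are at Euclidean distance $O(k^{1/d})$.  You state this correctly once, but your justification in step~1 (``two vertices within Euclidean distance $r$ \dots\ contributes $O(r^d)$ to the path distance'') and your conclusion in step~2 (``$d_H(x,y) \leq d_{B_n}(x,y) \leq C\|x-y\|^d$'') both argue for the \emph{reverse} inequality $d_H(x,y) \leq C\|x-y\|^d$, which is false for the Hilbert curve: Euclidean-close points can be arbitrarily far along the path when they straddle a high-level cube boundary.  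Relatedly, your claim that $x,y$ with $\|x-y\|=r$ a.s.\ lie in a common $B_n$ with $2^n \leq C'r$ is not true in general, and in any case is not what is needed.  The correct argument is simply H\"older continuity of the curve: if $d_H(x,y)=k$, the recursive structure confines $x$ and $y$ to a bounded number of sub-cubes of side $O(k^{1/d})$, giving $\|x-y\|=O(k^{1/d})$.  Once you fix this direction, the rest of your outline is fine.
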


For a proof, see \cite[Proposition 5]{h-liggett}.  The construction there
is based on taking a random translation of a $d$-dimensional Hilbert curve
(see Figure~\ref{fig:curve}).

\begin{figure}[h]
  \begin{center}
    \includegraphics[width=.6\textwidth]{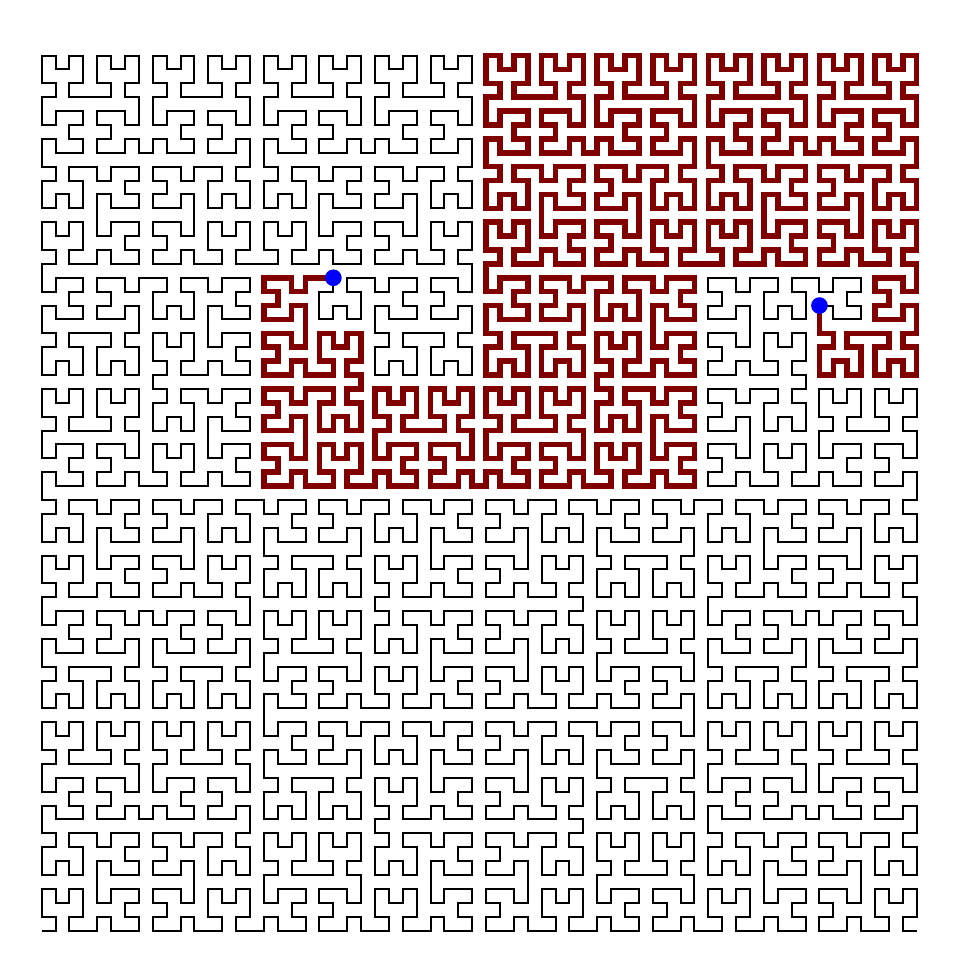}
    \caption{Part of Hilbert's space filling curve. The number of sites
      visited between two given points is at least a constant times the
      square of the distance. A similar construction works in higher
      dimensions.}
    \label{fig:curve}
  \end{center}
\end{figure}

\begin{samepage}
\begin{lemma}\label{L:dim_red}
  Let $\M$ be a translation invariant $V$-matching scheme of independent
  Poisson processes in $\R$, with typical family diameter $X$. Then for any
  $d>1$ there exists a translation-invariant $V$-matching scheme $\M'$ of
  independent Poisson processes in $\R^d$ of the same intensities whose
  typical family diameter $X'$ satisfies $\P(X' > r^d) \leq \P(X > c r -
  C)$ for all $r$, where $c,C$ are constants depending only on $d$.
\end{lemma}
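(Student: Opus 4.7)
The plan is to transport the one-dimensional matching scheme $\M$ to $\R^d$ via a translation-invariant measure-preserving bijection $\Phi:\R^d\to\R$ built from the random Hilbert path of \cref{L:spacefill}.

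Let $H$ be the random path on $\Z^d$, with vertex sequence $(x_n)_{n\in\Z}$ under some measurable (translation-covariant) enumeration; the specific choice of root will not matter since $\M$ is translation invariant on $\R$. Choose $U$ uniformly on $[0,1)^d$ independently of $H$, and fix once and for all a measure-preserving bijection $\phi:[0,1)^d\to[0,1)$ (for instance, digit-interleaving). Every $y\in\R^d$ lies in a unique translated cube $U+x_n+[0,1)^d$, and we set
\[
\Phi(y) := n + \phi\bigl(y-U-x_n\bigr) \in \R.
\]
By construction $\Phi$ is a measure-preserving bijection, so the pushforwards of independent homogeneous Poisson processes $\Pi_1,\dots,\Pi_q$ on $\R^d$ with intensities $\lambda_i$ are independent homogeneous Poisson processes $\Phi_\ast\Pi_i$ on $\R$ with the same intensities.

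Now apply the given 1D matching $\M$ to $(\Phi_\ast\Pi_1,\dots,\Phi_\ast\Pi_q)$ and pull each family back by $\Phi^{-1}$ to obtain a $V$-matching $\M'$ of $(\Pi_1,\dots,\Pi_q)$. For the diameter estimate, a family with 1D diameter $X$ meets at most $\lfloor X\rfloor+2$ consecutive unit intervals $[n,n+1)$, hence its preimage meets at most that many consecutive cubes $U+x_n+[0,1)^d$. By \cref{L:spacefill}(iii), the corresponding vertices $x_n$ all lie within distance $\bigl(C(X+2)\bigr)^{1/d}$ of one another, so the Euclidean diameter of the pulled-back family is at most $C'(X+2)^{1/d}+\sqrt{d}$. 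Solving, the event $\{X'>r^d\}$ forces $X+2 > \bigl((r^d-\sqrt{d})/C'\bigr)^{d}$, which for $r\geq 1$ implies $X>cr-C$, yielding the stated tail bound. Since $\Phi$ preserves Lebesgue measure, the Palm measures used to define the typical family diameters in $\R^d$ and in $\R$ correspond under $\Phi$, so the pointwise inequality $X'\le C'(X+2)^{1/d}+\sqrt{d}$ transfers to the claimed inequality between distribution functions.

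The most delicate point is translation invariance. The path $H$ is only invariant in law under $\Z^d$-translations by \cref{L:spacefill}(ii), so without the shift the construction would give only $\Z^d$-invariance of $\M'$; the independent uniform shift $U\in[0,1)^d$ repairs this, upgrading the joint law of $(\Phi,\Pi)$ to full $\R^d$-translation invariance, and the translation invariance of $\M$ on $\R$ then makes the pulled-back $\M'$ translation invariant on $\R^d$.
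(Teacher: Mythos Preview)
Your argument is correct and follows essentially the same route as the paper: transfer the one-dimensional scheme to $\R^d$ via the Hilbert path of \cref{L:spacefill}, with a uniform shift to upgrade $\Z^d$-invariance to full $\R^d$-invariance. The paper runs the map in the opposite direction (sending each point $x\in[\Pi_i]\subset\R$ to an independent uniform location in the cube $S(\lfloor x\rfloor)+[0,1)^d$, rather than building a fixed measure-preserving bijection $\phi$) and applies the uniform shift only at the end; the remark immediately following the paper's proof explicitly describes the measure-isomorphism variant you use.
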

\end{samepage}

\begin{proof}
  Let $\M$ be the $V$-matching of Poisson processes $(\Pi_i)$ in $\R$. It
  suffices to find a $V$-matching $\M'$ on $\R^d$ that is invariant
  in law under translations of $\Z^d$ and satisfies the claimed bound; then
  we obtain a fully translation invariant version by translating $\M'$ by
  an independent uniform element of $[0,1]^d$.

  Let $H$ be as in \cref{L:spacefill} and independent of $\M$. As in
  \cite{h-liggett} we define a bijection $S:\Z\to\Z^d$ by letting
  $S^{-1}(x)$ be the signed graph-distance along the path $H$ from
  $0\in\Z^d$ to $x$ (i.e.\ $\pm d_H(0,x)$, with sign $+$ if the path
  $H$ is directed from $0$ to $x$, and $-$ otherwise).

  Now, given the processes $\Pi_i$ on $\R$ define point processes $\Pi'_i$
  on $\R^d$ as follows. For each point $x\in [\Pi_i]$, let $x'$ be a
  uniform point in the cube $S(\lfloor x \rfloor)+[0,1]^d$, independent of
  all others. Let $\Pi'_i$ be the simple point process whose support is the
  set of resulting points $x'$. Clearly $(\Pi'_i)_{i=1}^q$ are independent
  Poisson process of intensities $\lambda_i$ on $\R^d$. For each family
  $F\in [\M]$ be can define a corresponding family $F'=\{x':x\in F\}$; let
  $\M'$ be resulting $V$-matching. The invariance property in
  \cref{L:spacefill} implies that $\M'$ is invariant in law under
  translations of $\Z^d$.

  If a family $F\in[\M]$ has diameter $r$ in $\R$, then by
  \cref{L:spacefill}(iii) $F'$ has diameter at most $C(r+1)^{1/d} + \surd
  d$ in $\R^d$. The required bound follows.
\end{proof}

\begin{remark*}
  It is possible to avoid the discretization to $\Z^d$ by constructing a
  (continuous) space filling curve $H:\R\to\R^d$ which is an isomorphism of
  measure spaces and satisfies $\|x-y\|^d \leq |H^{-1}(x)-H^{-1}(y)|$ for
  a.e.\ $x,y$. The construction is not very different from that of
  \cref{L:spacefill}. If $H$ is considered only up to translation of its
  parameter, then it can be made translation invariant (this is analogous
  to taking a directed path and not a bijection from $\Z$ to $\Z^d$). Then
  we simply set $\Pi'$ be the push-forward of $\Pi$ and $\M'$ the push-forward
  of $\M$ under the diagonal action of $H$.
\end{remark*}

\pagebreak

\begin{proof}[Proof of \cref{T:main} (iii), case
  $\lambda\in\cone(V)^\circ$]
  The upper bound is a combination of \cref{P:d1_super,L:dim_red}.

  The lower bound is trivial: By our assumptions on $V$ there is at least
  one unit vector that is not in $V$, meaning there is at least one colour
  $i$ for which a single point of colour $i$ is not a legal family. The
  lower bound now follows from the event of a having a point of colour $i$
  in the unit cube with no other points (of any colour) within radius
  $r$. This event has probability $c e^{-c r^d}$
\end{proof}

\section{Factor matchings}\label{s:factor}

Here we prove \cref{T:factor}.  The proof builds on some of the ideas from
the proof of \cref{T:main}, but additional ideas are needed.  Indeed, as we
shall discuss below, the construction giving the upper bound in the
underconstrained case above can be seen as a special case of the construction
used for \cref{T:factor}\ref{factor_exp}.  Note that throughout this
section we have $d=1$.

\subsection{A lower bound}

We start with the lower bound in the case $\lambda\in\cone(V)^\circ$ and
$\lattice\neq\Z^q$: (clause \ref{factor_cauchy}), which is different from
the exponential tail given by \cref{T:main}.

\begin{lemma}\label{L:factor_lower}
  If $\lattice\neq\Z^q$, then any translation invariant matching that is a
  factor of the Poisson processes has $\E^* X = \infty$.
\end{lemma}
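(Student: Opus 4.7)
The plan is a modularity argument via a character of the finite abelian group $\Z^q/\lattice$, extending the parity argument of \cite{HPPS} to the general lattice setting. Since in the interesting case $\lambda\in\cone(V)^\circ$ the lattice $\lattice$ has full rank, so $\Z^q/\lattice$ is a non-trivial finite abelian group and hence admits a non-trivial character. Concretely, I will pick an integer $m\geq 2$ and an integer linear functional $\phi\colon\Z^q\to\Z$ such that $\phi(v)\in m\Z$ for every $v\in V$ while some $\phi(e_j)\not\equiv 0\pmod m$; by adding suitable multiples of $m$ in each coordinate, I also arrange $\phi(\lambda)\neq 0$ (using $\lambda\in(0,\infty)^q$) and that the jumps $\{\phi(e_i)\bmod m\}$ generate a non-trivial subgroup of $\Z/m\Z$. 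Thinking of $\phi(e_i)$ as the ``charge'' of a colour-$i$ point, every legal family has total charge in $m\Z$.

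For the cumulative charge $Y(t):=\phi(\Pi(0,t])$, summing family-by-family gives $Y(t)=m\,N(t)+S(t)$, where $N(t)\in\Z$ is the sum of $\phi(F)/m$ over families $F$ lying inside $(0,t]$ and $S(t)\in\Z$ is the partial contribution of the families crossing $\{0\}$ or $\{t\}$; in particular $Y(t)\equiv S(t)\pmod m$. Suppose for contradiction that $\M$ is a factor $V$-matching with $\E^*X<\infty$. A Palm/mass-transport estimate as in \cref{sec.unsatisfiable} bounds the expected number of families crossing any point by a constant times $\E^*X$, and since $V$ is finite (so family sizes are bounded) this forces $\E|S(t)|$ to be bounded uniformly in $t$. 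Writing $S(t)=S_0(t)+S_t(t)$ for the contributions from families crossing $\{0\}$ and $\{t\}$ respectively, the factor assumption together with the tail-triviality of Poisson factors implies that for large $t$ these two become independent, each distributed as a common stationary one-sided boundary charge $\alpha\in\Z$ with $\E|\alpha|<\infty$.

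Now $Y(t)\pmod m$ is a continuous-time random walk on the finite group $\Z/m\Z$ and converges exponentially fast to the uniform distribution on the subgroup it generates. By the decomposition above the law of $S(t)\pmod m$ has the same limit, while it is also asymptotically the combination of two independent copies of $\alpha\pmod m$; so a Fourier argument on $\Z/m\Z$ (giving $|\widehat{\mu_\alpha}(\chi)|\in\{0,1\}$ for every character $\chi$) pins down the law of $\alpha\pmod m$ as uniform on a coset of a subgroup. This modular rigidity is the first half of the contradiction.

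The final step---deriving the actual contradiction from this rigidity together with $\phi(\lambda)\neq 0$, the integer-valuedness of $\alpha$, and the finite mean $\E|\alpha|<\infty$---is the extension of the HPPS parity argument, and I expect this to be the main obstacle. The idea is to play off the deterministic drift $\phi(\lambda)t$ of $Y(t)$ and its $O(\sqrt{t})$ Poisson CLT fluctuations against the at-most-$O(1)$ size of $S(t)$: in dimension $1$ the boundary of the interval $(0,t]$ consists of only two isolated points, so the ``$S(t)$ is a combination of two iid copies of $\alpha$'' description is tight, and the combined constraints become incompatible with the existence of any such $\alpha$ as a factor of Poisson. In higher dimensions the boundary of a growing box has far more degrees of freedom and the analogous obstruction disappears, which matches the authors' expectation that the $\lattice\neq\Z^q$ dichotomy is peculiar to $d=1$.
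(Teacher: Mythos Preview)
Your setup is essentially the paper's own argument, projected through a character $\phi:\Z^q\to\Z/m\Z$ of $\Z^q/\lattice$ rather than working in the full quotient. The paper defines $R(t)\in\N^q$ counting points in $(t,\infty)$ whose family meets $(-\infty,t]$, shows $R(t)-R(0)+\Pi(0,t]\in\lattice$ for all $t$, and observes that this is (up to sign) your identity $Y(t)\equiv S_0(t)+S_t(t)\pmod m$. Your use of $\E^*X<\infty$ to guarantee finiteness of the boundary counts, and of the factor hypothesis to approximate $S_0,S_t$ by local functions of $\Pi$, also match the paper.

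Where you go wrong is the final step. The contradiction has nothing to do with the drift $\phi(\lambda)t$, CLT fluctuations, integer-valuedness of $\alpha$, or $\E|\alpha|<\infty$; modular arithmetic is blind to all of these. Your Fourier argument only yields that the boundary law is uniform on a coset, which is perfectly consistent and not a contradiction at all. (Also, the claim that $S_0$ and $S_t$ share a \emph{common} law is unjustified --- one is a ``right half'' of crossing families and the other a ``left half'' --- though fortunately you do not actually need it.)

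The contradiction the paper uses, and which you almost have in hand, is a pure independence argument. Split $\Pi(0,t]=\Pi(0,A]+\Pi(A,t{-}A]+\Pi(t{-}A,t]$. By the factor property, $S_0$ is (with high probability) measurable with respect to $\Pi|_{[-A,A]}$ and $S_t$ with respect to $\Pi|_{[t-A,t+A]}$; in particular both are independent of $\Pi|_{(A,t-A)}$. Your identity $Y(t)\equiv S_0+S_t\pmod m$ then says that $\phi(\Pi(A,t{-}A])\pmod m$ equals something measurable with respect to $\Pi$ restricted to the two boundary windows --- i.e.\ it is independent of itself. Hence it is a.s.\ constant. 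But $\phi(\Pi(A,t{-}A])\pmod m$ is a random walk on $\Z/m\Z$ that converges to the uniform distribution on the non-trivial subgroup generated by $\{\phi(e_i)\}$, so it is certainly not constant. This is the whole argument; the ``modular rigidity'' and drift considerations should be deleted.
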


\begin{proof}
  Suppose for a contradiction that there is such a factor matching with
  $\E^* X < \infty$.  For any $t\in\R$, let $R(t)=(R(t)_1,\dots,R(t)_q)$ be
  the vector with $R(t)_i$ the number of points in $[\Pi_i]\cap(t,\infty)$
  whose family intersects $(-\infty,t]$.  Note that if there is a point at
  $a$ which contributes to $R(0)$, then $\diam(\M(a)) \geq a$, and so
  (by a standard property of the Palm process, \cite[eq.\ (5)]{HPPS})
  \[
  \E \left[\sum_i R(0)_i\right] \leq \sum \lambda_i \int_0^\infty
  \P^*(X\geq a) da = \E^* X.
  \]
  Thus in any matching with $\E^* X < \infty$, the coordinates of $R(0)$
  are almost surely finite.  Since $R(t)-R(0)$ is finite, it follows that
  a.s.\ $R(t)$ is finite for all $t$.

  Next, let $Q(t) = R(t) + \Pi(0,t]$, and consider how the process $Q(t)$
  evolves as $t$ is increased across a point of some colour $i$.  If the
  point at $t$ is the minimal of its family then $R$ increases by $\bv-e_i$
  for some $\bv\in V$, and $\Pi(0,t]$ increases by $e_i$, so $Q(t)$
  increases by some family type.  If the point at $t$ is not the minimal of
  its family then $Q$ has no jump at $t$.  It follows that $Q(t)$ is in the
  same coset of $\Z^d/\lattice$ for all $t>0$, and hence $R(t) - R(0) +
  \Pi(0,t] \in \lattice$ for all $t$.

  Consider the three cosets $(R(0)+\lattice, R(t)+\lattice,
  \Pi(0,t)+\lattice)$.  We
  claim that as $t\to\infty$ they converge jointly in distribution to a
  triplet $(X,Y,Z)$ of independent cosets, with $Z$ uniform on
  $\Z^q/\lattice$.  This contradicts the identity $Z=X-Y$ above.  To check
  the claim, note that $R(0)$ may be approximated by some function of $\Pi$
  restricted to $[-A,A]$, in the sense that there is a function of the
  restricted process taking values in $\Z^q/\lattice$ that is equal to
  $R(0)+\lattice$ with high probability.
  Similarly $R(t)$ may be approximated by the
  same function applied to $\Pi$ restricted to $[t-A,t+A]$.  Finally,
  $\Pi(0,t)$ is a sum of independent terms $\Pi(0,A)+\Pi(A,t-A)+\Pi(t-A,t)$,
  and the middle term is asymptotically uniform in $\Z^q/\lattice$.
\end{proof}

\subsection{Exponential tail}

When $\lattice=\Z^q$, the last argument does not give any barrier to
existence of a matching with a thinner tail, and indeed such matchings
exist.  We now adapt the construction from \cref{sec.underconstrained} to
construct a factor matching.  To demonstrate that another idea is needed,
consider a particular case of Example 7: points of a single colour, where
families consist of either two
or three points.  If (starting from some point) we wait for a matchable set
and match it, then we get a partition of the points of $\Pi$ into
consecutive pairs.  Given $\Pi$, there are two such matchings.  The
construction above gives a random one of these,  clearly not a factor of
$\Pi$.  Indeed, \cref{T:factor}\ref{factor_cauchy} shows that any factor
matching with exponential tail for $X$ must incorporate both family types.

The new idea is as follows.  Suppose first that each point of $[\Pi]$ is
given an independent fair coin toss.  Modify the construction above so that
a pair is matched if the coin of the right point is heads, while if the
coin is tails then the next point of $[\Pi]$ is added to form a triplet.
It is not hard to show that if this procedure is applied to the points of
$(T,\infty)$ then the resulting matchings converge as $T\to-\infty$, and
the limit is a factor of $\Pi$ together with the coins.  Clearly we cannot
define these coins as a factor of $\Pi$ while keeping them independent of
$\Pi$.  However, in the construction below we assign a coin to each point
of $\Pi$ as a factor of $\Pi$, by looking at the distances to the previous
point of $[\Pi]$, so that the resulting coins are i.i.d.\ and independent
of the colours.

To make this construction of a factor matching precise in the general case,
we first consider
integer indexed processes.  Without loss of generality we may normalize
$\lambda$ to have $\sum \lambda_i = 1$.  Consider a doubly infinite i.i.d.\
sequence of colours $(\xi_i)_{i\in\Z}$ with distribution given by
$\lambda$.  Consider also an independent sequence of i.i.d.\ Bernoulli(1/2)
random variables $(\epsilon_i)_{i\in\Z}$.

We define the population count in an interval $I\subset\Z$ as the vector
$\Pi(I)$ with $\Pi(I)_i = \#\{t\in I : \xi_t = i\}$.  Define the
\textbf{good block} starting at $s\in\Z$ to be the interval $(s,t]$ where
$t>s$ is minimal such that $\epsilon_t=1$ and $\Pi(s,t]\in\lattice_+$, that is
the points in $(s,t]$ are matchable and the extra variable at $t$ is $1$.

\begin{lemma}\label{L:good_tail}
  With the above notations, there are constants $c,C$ so that for any
  $s\in\Z$, if $(s,t]$ is
  the (unique) good block starting at $s$, then we have that $\P(t-s>x) < C
  e^{-cx}$.
\end{lemma}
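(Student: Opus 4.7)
The strategy is to reduce the claim to a discrete-time analogue of \cref{L:TM_tail} combined with a geometric thinning argument using the independent sequence $(\epsilon_i)$. First I would establish the following discrete version: setting $\tau_1 := \min\{t > s : \Pi(s,t] \in \lattice_+\}$, we have $\P(\tau_1 - s > x) \leq Ce^{-cx}$. The proof is line-by-line parallel to the proof of \cref{L:TM_tail} via \cref{L:in_cone_c}, \cref{L:hit_lattice} and \cref{L:dual}. Namely: (i) by Cram\'er-type large deviations applied to the i.i.d.\ sum $\Pi(s, s+n] = \sum_{j=1}^n e_{\xi_{s+j}}$, whose mean is $\lambda \in \cone(V)^\circ$, one has $\P(\forall n > N : \Pi(s,s+n] \in \cone_\alpha(V)) \geq 1 - Ce^{-cN}$; (ii) the projection of $\Pi(s, s+\cdot]$ onto $\Z^q/\lattice$ is a random walk on a finite group whose increments are the unit vectors $e_1,\dots,e_q$, which generate $\Z^q$ and hence $\Z^q/\lattice$, so the walk is irreducible and its return time to the identity coset has exponential tail; and (iii) \cref{L:dual} is a deterministic geometric statement that applies verbatim, turning lattice hits deep inside the cone into $\lattice_+$ hits. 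Combining these three ingredients in the manner of \cref{L:TM_tail} gives the claimed exponential bound on $\tau_1 - s$.

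Next I would incorporate the constraint $\epsilon_t=1$. Let $\tau_1 < \tau_2 < \cdots$ be the successive times $t > s$ with $\Pi(s,t] \in \lattice_+$, and set $K := \min\{k : \epsilon_{\tau_k} = 1\}$, so that the good block ends at $t = \tau_K$. Two facts suffice: (a) each gap $\tau_{k+1} - \tau_k$ is stochastically dominated by $\tau_1 - s$; and (b) conditionally on the sequence $(\tau_k)_{k \geq 1}$, the variables $\epsilon_{\tau_k}$ are i.i.d.\ Bernoulli(1/2). For (a), let $\sigma_k := \min\{t > \tau_k : \Pi(\tau_k, t] \in \lattice_+\}$; since $\lattice_+$ is closed under addition, $\Pi(s,\sigma_k] = \Pi(s,\tau_k] + \Pi(\tau_k,\sigma_k] \in \lattice_+$, so $\tau_{k+1} \leq \sigma_k$, and $\sigma_k - \tau_k \eqd \tau_1 - s$ by translation invariance of the i.i.d.\ sequence $(\xi_i)$ and the strong Markov property. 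For (b), each $\tau_k$ is measurable with respect to $\sigma(\xi_i : i \in \Z)$, while $(\epsilon_i)$ is an i.i.d.\ Bernoulli sequence independent of $(\xi_i)$.

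Given (a) and (b), $K$ is Geometric(1/2) and independent of the (dominating) gaps, so $t - s = \sum_{k=1}^K (\tau_k - \tau_{k-1})$ is a geometric sum of random variables with a common exponential tail. A standard moment-generating-function calculation then yields an exponential tail for this sum, completing the proof. The main obstacle is verifying the domination in (a) cleanly: one must argue that $\tau_{k+1}$ is bounded by a restart of the same construction at $\tau_k$ rather than at $s$, which rests on the observation that $\lattice_+$ is closed under addition so that successive $\lattice_+$-hits compose. Once this compositional property is noted, the remaining steps are routine.
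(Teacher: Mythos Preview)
Your argument is correct, but it is more elaborate than what the paper does, because you do not use the standing hypothesis of this subsection that $\lattice=\Z^q$.  The paper's proof exploits this directly: once the discrete analogue of \cref{L:in_cone_c} gives $\Pi(s,t]\in\cone_\alpha(V)$ for all large $t$ with exponentially high probability, the assumption $\lattice=\Z^q$ means $\Pi(s,t]\in\lattice$ automatically, and then \cref{L:dual} yields $\Pi(s,t]\in\lattice_+$ for \emph{all} large $t$, not just for a sequence of times.  From that point on, the constraint $\epsilon_t=1$ is handled by a single geometric waiting time, with no need for your renewal decomposition into successive $\lattice_+$-hits and the geometric-sum MGF estimate.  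Your step (ii) (the random walk on $\Z^q/\lattice$) is vacuous here since the quotient is trivial.

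What your route buys is generality: your argument goes through verbatim when $\lattice\neq\Z^q$, which is essentially why the same ideas reappear in the paper's later \cref{L:exp_tail}.  One small point to tighten in your write-up: in (a) you need more than marginal stochastic domination of the gaps $\tau_{k+1}-\tau_k$; you need the conditional version (given the past up to $\tau_k$, the next gap is dominated by a fresh copy of $\tau_1-s$), which is what your strong Markov argument actually gives.  With that, the coupling to an i.i.d.\ sum with an independent geometric number of terms is standard, and the MGF bound follows.
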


\begin{proof}
  This is essentially the same as \cref{L:TM_tail}, and an analogous argument
  works.  There are two differences: points are indexed by $\Z$, and we
  require $\epsilon_t=1$.  As in \cref{L:in_cone_c}, with high probability
  $\Pi(s,t]\in\cone_\alpha$ for all large enough $t$, and by \cref{L:dual}
  we find $\Pi(s,t]\in\lattice_+$ for all large $t$.  Since $\epsilon_t=1$
  with probability
  $1/2$ for each $t$, the good block from $s$ has exponential tail.
\end{proof}

\begin{coro}\label{C:finite_good}
  Almost surely there are only finitely many good blocks $(s,t]$ containing
  0.
\end{coro}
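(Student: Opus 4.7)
The plan is a direct first Borel--Cantelli argument built on the exponential tail from \cref{L:good_tail}. A good block $(s,t]$ contains $0$ if and only if $s\le -1$ and $t\ge 0$, equivalently the block length $L_s:=t-s$ satisfies $L_s\ge |s|$. For each $s\in\Z$ the good block starting at $s$ is well-defined and unique, so it suffices to show that almost surely only finitely many of the events $A_s:=\{L_s\ge |s|\}$ with $s\le -1$ occur.

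By \cref{L:good_tail}, there exist constants $c,C>0$ (independent of $s$, by stationarity of the sequence $(\xi_i,\epsilon_i)_{i\in\Z}$) such that $\P(A_s)=\P(L_s\ge |s|)\le Ce^{-c|s|}$. Therefore
\[
\sum_{s\le -1} \P(A_s) \;\le\; \sum_{n\ge 1} C e^{-c n} \;<\;\infty.
\]
By the Borel--Cantelli lemma, almost surely only finitely many $A_s$ occur, and hence almost surely only finitely many good blocks contain $0$.

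I do not expect any real obstacle here: the uniqueness of the good block starting at each $s$ makes the index set of candidate blocks equal to $\{s:s\le -1\}$, which is countable, and the exponential tail from \cref{L:good_tail} is exactly what is needed to summarize over this countable index. The only mild point to keep in mind is that the constants $c,C$ in the tail bound do not depend on the starting index $s$, which follows from the translation invariance of the underlying i.i.d.\ sequence; this is implicit in the statement of \cref{L:good_tail}.
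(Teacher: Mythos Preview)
Your proof is correct and follows exactly the approach the paper takes: the paper's proof is the single sentence ``This follows from \cref{L:good_tail} and the Borel--Cantelli Lemma,'' and you have simply spelled out the details of that argument.
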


\begin{proof}
  This follows from \cref{L:good_tail} and the Borel-Cantelli Lemma.
\end{proof}

We now consider partitions of $\Z$ into good blocks.  Such a partition
arises from a doubly infinite sequence $(t_i)_{i\in\Z}$ such that
$(t_i,t_{i+1}]$ are all good blocks.  (Two such sequences are considered
equivalent if they differ only in a shift of the indices.)
Given such a partition we can
define a matching of the points by taking some arbitrary matching of the
points in each good block.  We say that the sequence $(t_i)$ is a factor of the
sequences $\xi,\epsilon$ if the indicator of the set $\{t_i\}_{i\in\Z}$ is a
factor.  In that case, so is the resulting matching.  The following is a
key step towards proving \cref{T:factor}(ii).

\begin{prop}\label{P:unique}
  Almost surely, there is a unique partition of $\Z$ into good blocks.
\end{prop}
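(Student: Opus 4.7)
My plan is to prove both existence and uniqueness of the partition into good blocks.

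For existence, I would adapt the stationary Markov-chain construction of \cref{P:d1_super} to the integer-indexed setting. Consider the process whose state at position $t$ records the accumulated $\Pi$ since the last breakpoint together with the flag $\epsilon_t$; the chain resets precisely at the end of each good block. By \cref{L:good_tail} the return time to the reset state has an exponential tail, so the chain is positive recurrent and admits a stationary version; its resets form a translation-invariant partition of $\Z$ into good blocks.

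For uniqueness, let $\phi(s)$ denote the right endpoint of the good block starting at $s$, and set $P_T := \{\phi^k(T) : k \geq 0\}$. Any partition $P$ is forward-closed under $\phi$: if $T \in P$ then $P \cap [T,\infty) = P_T$. The plan is to establish the following \emph{coalescence property}: almost surely, for every pair of integers $T_1 < T_2$, the orbits $P_{T_1}$ and $P_{T_2}$ coincide from some point onward. Given coalescence (intersected over the countably many pairs), for any $n \in \Z$ the tails $P_T \cap [n,\infty)$ stabilize as $T \to -\infty$ through $\Z$; call the limit $P^\star$. Any partition $P$ then agrees with $P^\star$ on $[n,\infty)$: take $T \in P$ with $T$ arbitrarily negative and observe that $P_T$ coalesces with all orbits $P_{T'}$ for $T' < T$. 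Since $n$ is arbitrary, $P = P^\star$.

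Coalescence for a fixed pair $T_1 < T_2$ reduces to a Markov-chain absorption argument on the joint ``residual'' process $(R_1(t), R_2(t))$, where $R_i(t) := \Pi(r_i(t), t]$ and $r_i(t)$ is the latest element of $P_{T_i}$ that is at most $t$. This is a Markov chain in $t$ whose diagonal $R_1 = R_2$ is absorbing. Each marginal is positive recurrent by \cref{L:good_tail}, and the key \emph{syncing step} asserts that from any joint state, both residuals have uniformly positive probability to reach $\lattice_+$ and see $\epsilon = 1$ at the same step, giving simultaneous resets. The syncing step uses the independent Bernoulli tie-breakers $\epsilon_t$ to ensure the two chains can reset together even when their residuals differ.

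\textbf{Main obstacle.} The most delicate point is the syncing step: establishing uniformly positive probability that the joint chain synchronizes in a bounded number of steps. This requires adapting the single-orbit arguments behind \cref{L:in_cone_c,L:hit_lattice,L:dual} to track two residuals in parallel, and relies essentially on $\lambda \in \cone(V)^\circ$ together with the independence of $(\epsilon_t)$ from $(\xi_t)$.
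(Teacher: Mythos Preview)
Your plan has the right skeleton --- existence via a stationary version of the block chain is exactly what the paper does, and your ``syncing step'' for two orbits driven by the same $(\xi,\epsilon)$ is precisely the content of the paper's \cref{L:couple}. But there is a real gap in the hypothesis you flag as essential, and a secondary issue in how you pass from pairwise coalescence to uniqueness.

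The syncing step requires $\lattice=\Z^q$, not merely $\lambda\in\cone(V)^\circ$. Track the difference $R_1(t)-R_2(t)$: at an ordinary step both residuals gain $e_{\xi_t}$ and the difference is unchanged; when one residual resets the difference changes by an element of $\lattice$. Hence the coset of $R_1-R_2$ in $\Z^q/\lattice$ is invariant for all $t\ge T_2$. If that coset is nonzero the two residuals can never lie in $\lattice$ simultaneously, so they never reset together and your syncing step fails outright. The standing hypothesis $\lattice=\Z^q$ is exactly what forces every such coset to vanish, and the paper uses it explicitly: for any bounded pair of states there is a finite colour word making both matchable (this is where $\lattice=\Z^q$ enters), and taking all $\epsilon_i=0$ except the last forces a simultaneous reset. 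When $\lattice\ne\Z^q$ the proposition is actually false --- the paper shows in the next subsection that there are then $|\Z^q/\lattice|$ distinct partitions --- so this is not a cosmetic omission.

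Separately, ``pairwise coalescence $\Rightarrow$ the tails $P_T\cap[n,\infty)$ stabilise as $T\to-\infty$'' is not automatic: the merging time $c(T_1,T_2)$ need not stay below $n$ as $T_1\to-\infty$. At minimum you need the finiteness of good blocks through a given point (\cref{C:finite_good}, an immediate Borel--Cantelli consequence of \cref{L:good_tail}) to reduce to coupling finitely many chains. The paper sidesteps the backward-limit construction with a shorter contradiction: if two partitions exist, the finitely many associated chains (finiteness by \cref{C:finite_good}) all agree from some minimal time $M$; but $M$ is then a translation-invariant integer-valued factor of $(\xi,\epsilon)$, which is impossible.
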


Towards proving \cref{P:unique}, we will define a Markov chain $(Z_n)$ with
state space $\N^q$.
The state $Z_n$ will be a deterministic function of the previous
state $Z_{n-1}$ together with $\xi_n$ and $\epsilon_n$.  Given $Z_{n-1}$,
first increase by
1 the $\xi_n$ coordinate to give $Z'_n:=Z_{n-1}+e_{\xi_n}$.  The next state
$Z_n$ is equal to $Z_n'$
unless $Z_n'$ is a matchable vector and $\epsilon_n=1$, in which case we
instead set $Z_n=0$.  With the given distribution for $\xi$ and $\epsilon$,
this defines a Markov transition matrix, but apriori
there could be multiple sequences $(Z_n)$ consistent with a given sequence
$(\xi_n,\epsilon_n)$.

Subsequently, we shall deduce from \cref{P:unique} that there is in fact a
unique process $(Z_n)$, which moreover is a factor of $(\xi_n,\epsilon_n)$.

\begin{lemma}\label{L:MC_nice}
  The transition matrix on $\N^q$ defined above is irreducible, aperiodic
  and positive recurrent.
\end{lemma}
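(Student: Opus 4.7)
The plan is to verify each of the three properties separately, leveraging the structural assumption $\lattice=\Z^q$ for aperiodicity and \cref{L:good_tail} for positive recurrence.

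For \emph{irreducibility} I would first show that from state $0$ every target $z\in\N^q$ is reached with positive probability: prescribe a colour sequence $\xi_1,\dots,\xi_n$ of length $n:=\sum_i z_i$ that accumulates $z$ step by step, and require $\epsilon_t=0$ at each intermediate index $t<n$ at which $Z'_t\in\lattice_+$. Only finitely many coins are constrained, so this event has positive probability. Conversely, from any $z\in\N^q$ the chain reaches $0$ with positive probability: by the reasoning behind \cref{L:in_cone_c,L:dual}, there exists a finite sequence $\xi_1,\dots,\xi_m$ of colours for which $z+\sum_{s=1}^m e_{\xi_s}\in\lattice_+$; specifying this sequence together with $\epsilon_m=1$ and $\epsilon_s=0$ at any intermediate matchable state yields a positive-probability path from $z$ to $0$.

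For \emph{aperiodicity} I would use a numerical-semigroup argument. Apply the sum homomorphism $s:\Z^q\to\Z$, $s(x):=\sum_i x_i$; its image on $\lattice$ is the subgroup of $\Z$ generated by $\{s(\bv^j)\}_{j=1}^k=\{\|\bv^j\|_1\}_{j=1}^k$, so the assumption $\lattice=\Z^q$ forces $\gcd_j\|\bv^j\|_1=1$. Any return from $0$ to $0$ in $n$ steps corresponds to accumulating a matchable vector of total size $n$, so the set of possible return times is contained in the numerical semigroup $S:=\{\sum_j a_j\|\bv^j\|_1:a\in\N^k\setminus\{0\}\}$; conversely each $n\in S$ is achieved with positive probability by prescribing a colour sequence that forms $a_1$ copies of $\bv^1$, then $a_2$ copies of $\bv^2$, and so on, with $\epsilon=0$ at every intermediate matchable state and $\epsilon=1$ at step $n$. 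By the Sylvester--Frobenius theorem $S$ contains all sufficiently large integers, so $\gcd S=1$ and state $0$ has period $1$; combined with irreducibility this gives aperiodicity of the chain.

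For \emph{positive recurrence}, starting from $0$ the first return to $0$ is precisely the length of the good block beginning at $0$, which by \cref{L:good_tail} has exponential tail and in particular finite mean. The delicate step is aperiodicity, which is where the lattice hypothesis $\lattice=\Z^q$ plays a substantive role: one must identify the possible return times as the numerical semigroup generated by the family sizes $\|\bv^j\|_1$, and then translate $\lattice=\Z^q$ into $\gcd_j\|\bv^j\|_1=1$ via the sum homomorphism. The irreducibility steps are routine once one exploits the freedom in the coin sequence, and positive recurrence follows directly from \cref{L:good_tail}.
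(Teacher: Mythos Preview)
Your proof is correct and follows essentially the same approach as the paper: the irreducibility argument (reach any state from $0$ with coins set to $0$, and reach $0$ from any state by climbing to a matchable vector and setting the final coin to $1$) and the positive recurrence via \cref{L:good_tail} are identical to the paper's. For aperiodicity the paper simply asserts that $\lattice=\Z^q$ forces matchable vectors of every sufficiently large $\|\cdot\|_1$; your version makes this explicit via the sum homomorphism and the numerical semigroup generated by the $\|\bv^j\|_1$, which is a more detailed rendering of the same idea (the detour through Sylvester--Frobenius is harmless but unnecessary, since $\gcd S=1$ is immediate from the generators $\|\bv^j\|_1$ lying in $S$).
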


\begin{proof}
  If $x\leq y$ coordinate-wise then $y$ is reachable from $x$, since we
  could have $\epsilon_i=0$ for as long as needed.  For every state $x$
  there is a matchable state $y\geq x$ coordinate-wise.  By having only the
  last $\epsilon=1$, we see that $0$ is reachable from $x$.  Thus the chain
  is irreducible.  Since $\lattice=\Z^q$, there are matchable states $y$ of
  any large enough $\|y\|_1$, so the possible return times to $0$ have
  greatest common divisor $1$.  Finally, if $Z_0=0$ then the return time to
  $0$ is the $t$ such that $(0,t]$ is a good block.  Since this has an
  exponential tail, the chain is positive recurrent.
\end{proof}

\begin{lemma}\label{L:couple}
  Let $(Z_n)_{n\in\N}$ and $(Y_n)_{n\in\N}$ be two instances of the Markov
  chain with
  different initial condition $Z_0\neq Y_0$ and using the same colours
  $(\xi_n)$ and coins $(\epsilon_n)$.  Then almost surely the chains agree
  eventually.
\end{lemma}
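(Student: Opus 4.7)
The plan is to analyse the joint chain $(Z_n, Y_n)$ on $\N^q \times \N^q$ driven by the shared input $(\xi_n, \epsilon_n)$. The key structural observation is that once $Z_n = Y_n$ the two chains remain equal thereafter, since the Markov update is a deterministic function of the current state and input. Hence it suffices to show that the pair chain almost surely reaches $(0, 0)$.

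The first ingredient is a \emph{local reachability} estimate: for each $M$ there exist $T_M \in \N$ and $p_M > 0$ such that from any $(z, y)$ with $\|z\|_1, \|y\|_1 \leq M$, the pair chain reaches $(0, 0)$ at time $T_M$ with probability at least $p_M$. One prescribes an input sequence whose common colour increment $s = \sum_{i=1}^{T_M} e_{\xi_i}$ is a sufficiently large multiple of $\sum_i \bv^i$, together with $\epsilon_1 = \cdots = \epsilon_{T_M - 1} = 0$ and $\epsilon_{T_M} = 1$. Since $\{\bv^i\}$ spans $\R^q$, both $z$ and $y$ admit real combinations $\sum c^i \bv^i$ with coefficients controlled by $M$, so taking the multiplier large enough (depending only on $M$) places both $z + s$ and $y + s$ in $\cone_\alpha(V) \cap \Z^q$, which by \cref{L:dual} together with $\lattice = \Z^q$ lies in $\lattice_+$. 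Both chains therefore reset simultaneously at step $T_M$, and the probability of this specific input is bounded below uniformly by some $p_M > 0$.

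The second ingredient is tightness. By \cref{L:MC_nice} each marginal chain is positive recurrent with some stationary distribution, so by the ergodic theorem the a.s.\ density of $\{n : \|Z_n\|_1 > M\}$ tends to $0$ as $M \to \infty$, and likewise for $Y_n$. A union bound shows that for $M$ large the set $\{n : (Z_n, Y_n) \in B_M\}$, with $B_M := \{(z, y) : \|z\|_1, \|y\|_1 \leq M\}$, has positive density and is therefore a.s.\ infinite. One then combines this with reachability via the strong Markov property. Define stopping times $\tau_0 = 0$ and $\tau_{i+1} = \inf\{n > \tau_i + T_M : (Z_n, Y_n) \in B_M\}$, all finite a.s. Conditionally on $\mathcal{F}_{\tau_i}$, the probability that the input block on $[\tau_i + 1, \tau_i + T_M]$ matches the reachability prescription is at least $p_M$; since successive blocks are disjoint, $\P(\text{never hit } (0,0)) \leq (1 - p_M)^k$ for every $k$, hence $0$.

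The delicate step is the reachability estimate, which requires a single colour increment that simultaneously pushes both $z$ and $y$ into the matchable region. This is where the hypothesis $\lattice = \Z^q$ is essential: without it, one of the chains could be confined to a coset of $\lattice$ not containing $0$, ruling out any simultaneous reset and thereby obstructing coalescence.
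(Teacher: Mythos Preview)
Your proof is correct and follows essentially the same two-part structure as the paper: a tightness step (positive recurrence plus the ergodic theorem yields infinitely many times with both states in a bounded set $B_M$) followed by a local reachability step (from any state in $B_M$ a fixed finite input sequence forces a simultaneous reset with uniformly positive probability), iterated via the strong Markov property. The paper asserts the reachability step in one sentence (``$\lattice=\Z^q$ implies that there is a sequence of colours that, when added to $Z$ and $Y$, will make both matchable''), whereas you supply an explicit construction via a large multiple of $\sum_i \bv^i$ and invoke \cref{L:dual}; this is a welcome clarification but not a different idea.
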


\begin{proof}
  By \cref{L:MC_nice} and the ergodic theorem there is
  some $M$ such that the set $\{n : \|Z_n\|_1\leq M\}$ has density at least
  $2/3$.  The same holds for $Y$, and therefore there are a.s.\ infinitely
  many times when $\|Z_n\|_1,\|Y_n\|_1\leq M$.  We shall show that each
  time this happens, there is some probability of coupling within some
  bounded time.  By the Markov property, the chains almost surely
  couple eventually.

  From any such states $(Z,Y)$, there is some positive probability that the
  next jump
  to $0$ of $Z$ and $Y$ is at the same time, after which $Z$ and $Y$ agree.
  To see this, note that $\lattice = \Z^q$ implies that there is a sequence
  of colours that, when added to $Z$ and $Y$, will make both matchable.
  If subsequent $\xi$'s are such a sequence with all $\epsilon_i=0$ except
  for the last, then $Z$ and $Y$ jump to $0$ together, as desired.
\end{proof}

\begin{proof}[Proof of \cref{P:unique}]
  Suppose for a contradiction that there are multiple distinct partitions
  of $\Z$ into good blocks.  Each such partition gives rise to a copy
  $(Z_n)$ of the Markov chain with $Z_n=0$ precisely at the the ends of the
  blocks of the partition.  By \cref{C:finite_good} there are only finitely
  many different good blocks containing $0$, and so only finitely many
  different values for the Markov chains at time $0$. By \cref{L:couple},
  the associated Markov chains all agree from some time on.  Thus there is
  some minimal $M\in\Z$ such that all partitions give the same value of
  $Z_M$ (and thus the chains also agree for all $n\geq M$).  This $M$ is a
  translation invariant factor of the sequences $\xi,\epsilon$, which is
  impossible.

  To prove existence, note that the sequence of triplets $(\xi_n, \eps_n,
  Z_n)$ is also a positive recurrent Markov chain. Take a stationary doubly
  infinite sequence of triplets, and note that the sequences $(\xi_n)$ and
  $(\eps_n)$ are i.i.d.\ with marginal laws $\lambda$ and Bernoulli(1/2).
  Thus there is a coupling of the sequences $\xi$, $\epsilon$, and $Z$ with
  the given marginals. The set of times when $Z_t=0$ gives the endpoints of
  a partition to good blocks.
\end{proof}

\begin{proof}[Proof of \cref{T:factor}\ref{factor_exp}]
  Assume without loss of generality that $\sum\lambda_i=1$.
  Consider the Palm process of the Poison process $\Pi$, with law $\P^*$.
  First, we construct the process $(\xi_n,\epsilon_n)$ from $\Pi$.  Index
  the points of $\Pi$ by $\Z$ in order, with the point at $0$ having
  index $0$.  Let $\xi_n$ be the colour
  of the $n$th point, and let $\epsilon_n$ be $1$ if the distance from the
  $n$th point to the previous one is at least $\log 2$.  This constructs on
  the probability space of $\Pi$ the i.i.d.\ sequence of colours $\xi$ and the
  independent collection of variables $\epsilon$.

  By \cref{P:unique} there is a unique realization of the Markov chain
  $Z_n$ driven by $\xi$ and $\epsilon$.   As noted above, this gives a
  matching as a factor of the discretized process.  Since points of $\Pi$
  naturally correspond to $\Z$, this also gives a matching as a factor of
  $\Pi$.

  It remains to see that in this matching, $X$ has an exponential tail.  If
  the family of the point at $0$ contains a point greater than $t$, then
  either $[0,t]$ contains less than $t/2$ points, or else the good block of
  $0$ contains at least $t/2$ points.  Both of these events have
  exponentially decaying probabilities in $t$, since $\sum \lambda_i=1$.
\end{proof}

We remark that it is also possible to define a continuous time version of
the Markov chain in this section, and use the continuous process to define
the factor matching (taking into account the time since the last event,
which encodes the $\epsilon_i$).  However, the
discretization makes the process easier to define, and is also useful for
formalizing the constructions in the next section.

\subsection{Factor matching construction when $\lattice\neq\Z^q$}

We now extend the ideas from the previous section to construct a factor
matching for the general underconstrained case.  Define the quotient group
$\Gamma := \Z^q/\lattice$, and note that since $\cone(V)^\circ$ is
non-empty, $\lattice$ has full rank, and so $\Gamma$ is a finite group with
canonical homomorphism from $\Z^q$.

Suppose $\lattice\neq\Z^q$, so that $|\Gamma| > 1$.  The reason that the
construction of the previous section fails is that \cref{P:unique} does not
hold.  Indeed there are precisely $|\Gamma|$ partitions of $\Z$ into good
blocks, and no way to select one as a factor of $\Pi$
(this can be proved similarly to \cref{L:factor_lower}).  We introduce two key
ideas in order to
overcome this difficulty, at the expense of a worse tail for $X$.  First,
we modify the Markov chain so that a version of \cref{L:couple}
holds (and hence also a version of \cref{P:unique}).  This is done by leaving
some points unmatched, chosen independently of their colours.  Second, we
iterate the procedure
to deal with unmatched points.  Thus we
have an infinite series of stages, each dealing with the increasingly
spread out left-overs from the previous stages.  The final product of this
argument is the following.

\begin{prop}\label{P:factor_cauchy}
  If $\lambda\in\cone(V)^\circ$, then there exists a translation invariant
  matching that is a factor of the Poisson processes with $\P^*(X>t) \leq
  C/t$ for some constant $C$.
\end{prop}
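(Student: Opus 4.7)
The plan is to build the matching in infinitely many stages, at successively larger scales. At each stage most points are matched into families whose diameter has an exponential tail at that stage's scale, while a geometrically small fraction of ``deferred'' leftover points is handed on to the next stage; the contributions to $\P^*(X>t)$ from the different stages will sum to give the $C/t$ bound.

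First I would modify the construction of the previous subsection to produce, as a factor of $\Pi$, both a partial matching and a sparser point process of deferred points. The modification introduces an extra layer of factor randomness: using further threshold functions of the inter-point spacings (analogous to the definition of $\epsilon_n$, but based on disjoint features of the gap so as to be independent), one obtains at each point of $\Pi$ an i.i.d.\ ``deferral coin'' that is independent of the point's colour. Each point is flagged as deferred with probability $\rho\in(0,1)$, independently of its colour. Since colours are independent of gaps for a Poisson process, the deferred points inherit i.i.d.\ colours with the original marginals, and their inter-point gaps have an exponential tail at scale $1/(\rho\sum_j\lambda_j)$; this is enough to iterate the construction. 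The non-deferred points are matched by a modification of the greedy Markov chain construction: in addition to the jumps defined before, a sparse factor-defined set of ``reset'' epochs force the chain to state $0$, with any points being accumulated in the chain's state at a reset added to the deferred set.

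The reset mechanism is what restores uniqueness of the factor matching despite $\lattice\neq\Z^q$. Between two consecutive resets the chain is run afresh starting from $0$ on the non-flagged points, so the within-block matching is a deterministic function of the block data. A version of \cref{L:couple} holds when restricted to a single block: two instances started from different states couple by the end of the block with positive probability, because with positive probability the block is long enough for both chains to return to $0$ on their own. This yields the analogue of \cref{P:unique}, and hence a bona-fide factor matching at each stage. Second I would iterate, applying the same construction with fresh factor randomness to the deferred process at each stage. At stage $n$ the typical inter-point spacing of the deferred process is of order $\rho^{-n}$, so stage-$n$ families have diameter with exponential tail at scale $\rho^{-n}$, and a given point is deferred past stage $n$ with probability at most $\rho^n$. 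Summing over stages,
\[
\P^*(X>t)\ \le\ \sum_{n=1}^{\infty}\rho^{n-1}\,C e^{-c t\rho^{n}}\ \le\ \frac{C'}{t},
\]
where the final estimate follows by splitting the sum at $n^*\approx\log_{1/\rho}(ct)$: for $n\le n^*$ the exponential factor dominates and the sum is of order $1/t$; for $n>n^*$ the exponential is of order $1$ and the sum is a geometric series in $\rho$ of total order $\rho^{n^*}$, again of order $1/t$.

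The main obstacle I expect is the first step: designing the skip-and-reset modification so that the stage-$1$ matching is genuinely a factor of $\Pi$, and the deferred process has the regularity needed to iterate. Specifically, one must verify the block version of \cref{L:couple} for the modified chain, and check that the secondary deferrals coming from reset-induced leftovers occur rarely enough not to destroy the exponential tail of the inter-point gaps in the deferred process (this can be arranged by choosing the reset rate much smaller than $\rho$). Once these are established, the iteration and the tail calculation above become routine.
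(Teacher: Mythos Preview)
Your multi-scale outline and the final summation estimate match the paper's approach. The gap is in your mechanism for achieving a factor at each stage. The reset idea does make the within-block matching deterministic (so your appeal to \cref{L:couple} is actually beside the point --- once the chain is restarted from $0$ at each reset there is nothing to couple). The real problem is with the secondary deferrals. When a reset fires, the points currently sitting in the chain's state have already had their colours \emph{read} by the chain; conditioned on being deferred in this way, their colours are no longer i.i.d.\ with law $\lambda$. You only check that secondary deferrals are rare enough not to spoil the inter-point gap tails, but the more serious casualty is the conditional-i.i.d.-colour hypothesis on which the exponential return-time estimate for the greedy chain depends. Without it you cannot apply the same construction at stage $2$, and there is no evident control on how the colour bias accumulates over infinitely many stages.

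The paper resolves both issues --- uniqueness and preservation of i.i.d.\ colours --- with a single device that replaces your reset. Instead of restarting, it introduces a \emph{skip} rule: a point $n$ is skipped (deferred) precisely when the coset process $Y_{n-1}\in\Z^q/\lattice$ is at $0$ and an auxiliary coin $\zeta_n$ is $0$. Crucially, the skip decision never looks at the colour $\xi_n$, so conditioned on the skipped set the colours there remain i.i.d.\ with law $\lambda$, and the hypotheses needed for the next stage are preserved exactly. At the same time, if two chains are driven by the same $(\xi,\epsilon,\zeta)$ from different initial cosets, then at a time when one has $Y=0$ and the other does not, a skip alters their coset difference; this is what lets the coupling argument of \cref{L:couple} succeed despite $\lattice\neq\Z^q$, giving uniqueness. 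The paper also assigns each point a geometric first-active stage $G_n$ up front (rather than thinning stage by stage), which injects a fresh i.i.d.\ batch at every stage and keeps the intensity recursion $\beta_s\le C2^{-s}$ straightforward.
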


\begin{proof}[Proof of \cref{T:factor}\ref{factor_cauchy}]
  The last proposition proves the upper bound, while the lower bound is
  given by \cref{L:factor_lower}.
\end{proof}

\paragraph{Overview of the construction.}
The matching will be constructed in stages.  In each stage we shall
construct a partial matching, which consists of valid families but leaves
some points unmatched.  In each stage the partial matching is constructed
using a variation of the Markov chain that we describe below.  The diameter
of families will have exponential tail.  Unmatched points will be matched
in some later stage.  The probability that a point is matched at stage $s$
will decay exponentially in $s$.  However, points at stage $s$ will
typically be far apart.  It will turn out that the dominant contribution to
$\P^*(X>t)$ will come from $s$ with $2^s \approx t$.

\paragraph{Modified Markov chain.}
Recall the Markov chain $(Z_n)$ from the previous section, taking values in
$\N^q$, with steps defined in terms of colours $\xi_n$ and coins
$\epsilon_n$, which at step $n$ increases the $\xi_n$th coordinate, and
possibly jumps to $0$.

We modify this in two ways to define a new process $Z_n$.  First, we allow
an extra value $\xi_n=\emptyset$, signifying that $n$ is ``unoccupied''.
In that case set $Z_n=Z_{n-1}$.  The set of occupied times can have
correlations, so $Z_n$ is no longer a Markov chain, but is still a time
change of a Markov chain.

Let $Y_n\in\Gamma$ be the coset containing $Z_n$.  Note that $Y_n$ is
determined by $Y_{n-1}$ and $\xi_n$, with no need to know $\epsilon_n$,
since the jumps of $Z_n$ to $0$ do not show in $Y_n$.  Moreover, if we take
two copies of this chain driven by the same sequence $\xi$, started at
$Z_0$ and $Z'_0$, then the coset $Y'_n-Y_n$ does not depand on $n$.  If
$Y'_0\neq Y_0$ then the two Markov chains do not couple.

The second  modification is in terms of a third sequence $\zeta$, in
addition to $\xi$ and $\epsilon$.  We make the following
assumptions about their distribution.
\begin{itemize}
\item[(A1)] $\xi_n$ takes values in $\{\emptyset,1,\dots,q\}$.  The
  set $S = \{n:\xi_n\neq\emptyset\}$ (for \emph{support}) is non-empty, and
  its indicator is an ergodic process.
\item[(A2)] Conditioned on $S$, the $\xi|_S$ are i.i.d.\ with law $\lambda$.
\item[(A3)] Conditioned on $S$, the restrictions $\epsilon|_S$ and
  $\zeta|_S$ take independent uniform values in $\{0,1\}$, and are also
  independent of $\xi$.
\end{itemize}
The $\zeta$ variables indicate some points which \emph{might} be left unmatched
(where $\zeta_n=0$).

Given the sequences $\xi,\epsilon,\zeta$, we now define the transitions of
the process $Z$ and its coset process $Y$ using the following procedure.
This process will be a single stage in an
iterative approach, and is depicted in \cref{F:stage}.
Suppose $Z_{n-1}$ is given.
\begin{itemize}[nosep]
\item If $\xi_n=\emptyset$ then set $Z_n=Z_{n-1}$.
\item If $Y_{n-1}=0$ and $\zeta_n=0$, then also $Z_n=Z_{n-1}$.  In this
  case we say a point at $n$ is \textbf{skipped}.
\item Otherwise, let $Z'_n = Z_n + e_{\xi_n}$ (if we reach this case,
  $\xi_n\neq\emptyset$).
\item If $Z'_n\in\lattice_+$ (i.e.\ is matchable) and $\epsilon_n=1$
  then $Z_n=0$.  Otherwise, $Z_n=Z'_n$.
\end{itemize}

\begin{figure}
  \begin{center}
    \includegraphics[width=.95\textwidth]{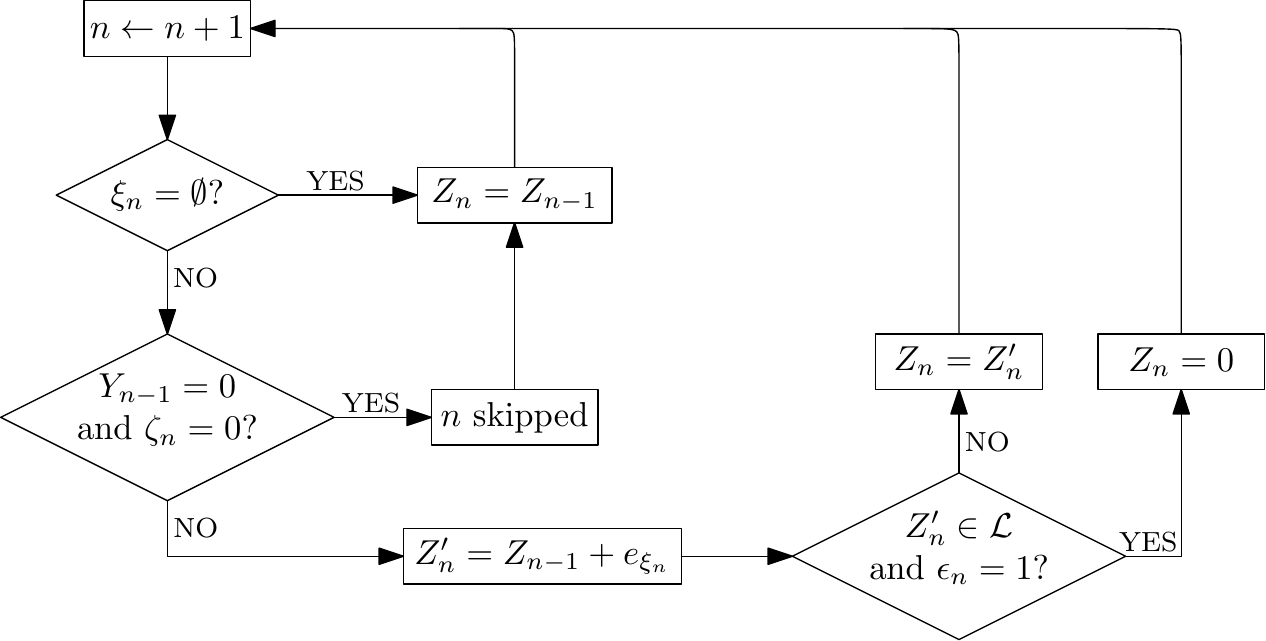}
  \end{center}
  \caption{A stage in the iterative construction of a factor matching.
    This flowchart shows how to compute $(Z_n)_{n\geq a}$ given $Z_a$ and
    the sequences $\xi,\epsilon,zeta$. \cref{P:unique2} implies that there
    is a unique choice of the entire sequence $(Z_n)_{n\in\Z}$ consistent
    with the recursion.}
  \label{F:stage}
\end{figure}

Some observations should be made at this time.  First, since the $\xi$ are
not assumed to be i.i.d., the resulting processes $(Z_n)$, $(Y_n)$ are not
Markov chains.  Instead, these are time changed Markov chains which make a
step at times $n\in S$.  Second, $Y_n$ can be determined from $Y_{n-1}$,
$\xi_n$ and $\zeta_n$, since we always have $Z_n - Z'_n \in \lattice$.
Thus $(Y_n)$ is itself a (time changed) Markov chain.  Finally, let
$\widehat S$ be the set of times at which a point is skipped.  Then the
indicator of $\widehat S$ is also ergodic. (It follows from \cref{P:unique2}
below that $\widehat S$ is a factor of $\xi$ and $\zeta$.)
Moreover, for $n\in\widehat S$, the
procedure above does not observe $\xi_n$.  Consequently, conditioned on
$\widehat S$, the restriction $\xi|_{\widehat S}$ is an i.i.d.\ process with law
$\lambda$.

The first steps in analysis of the process are just as in the case
$\lattice=\Z^q$.  While $Z_n$ and $Y_n$ are not Markov chain, their
transitions at times $n\in S$ are Markovian.  Consider the transition
probabilities of $Z_n$ and $Y_n$ at a time $n\in S$, i.e.\ with $\xi_n$
with law $\lambda$ and independent $\epsilon_n$,$\zeta_n$.

First, we control the return times to $0$ of $Z_n$, in terms of the number
of points in $S$.

\begin{lemma}\label{L:exp_tail}
  Let $T$ be the return time to $0$ for $(Z_n)$, started at $Z_0=0$ and
  $L_t = |S\cap(0,t]|$.  Then for some $c,C>0$ depending only on $V$ and
  $\lambda$ we have $\P(L_T>n) \leq Ce^{-cn}$.
\end{lemma}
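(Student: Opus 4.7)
My plan is to bound $L_T$ by separating the $S$-times in $(0,T]$ into \emph{active} steps (those that genuinely update $Z_n$) and \emph{skipped} steps (those with $Y_{n-1}=0$ and $\zeta_n=0$), to bound the number of active steps using a discrete analogue of the argument for \cref{L:TM_tail}, and then to dominate the skipped steps by a simple geometric overhead. Write $\tau$ for the number of active steps in the excursion and $\sigma$ for the number of skipped ones, so that $L_T=\tau+\sigma$.

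For the active steps, let $W_k\in\N^q$ denote the accumulated colour vector after the first $k$ active $S$-times. By assumptions (A2) and (A3), the restrictions $\xi\mid_{\text{active}}$ and $\epsilon\mid_{\text{active}}$ are independent i.i.d.\ sequences with laws $\lambda$ and $\mathrm{Bernoulli}(1/2)$ respectively, and the transition rule at active steps is exactly: add $e_{\xi}$, and return to $0$ if the result is in $\lattice_+$ and $\epsilon=1$. Hence $\tau$ equals the first $k$ with $W_k\in\lattice_+$ and $\epsilon=1$ at that step, which is precisely the ``good block'' quantity of \cref{L:good_tail} adapted to our lattice. Running the proof of \cref{L:TM_tail} in discrete time gives: (i) a Cram\'er-type bound, as in \cref{L:in_cone_c}, forces $W_k\in\cone_\alpha(V)$ for all $k\geq n$ with failure probability $Ce^{-cn}$; (ii) $W_k\bmod\lattice$ is an irreducible random walk on the finite quotient group $\Z^q/\lattice$ (finite because $\cone(V)^\circ\neq\emptyset$ forces $\lattice$ to have full rank), so the gaps between its visits to $0$ have exponential tails, as in \cref{L:hit_lattice}; (iii) choosing $\alpha$ as in \cref{L:dual} ensures that $W_k\in\lattice\cap\cone_\alpha(V)$ implies $W_k\in\lattice_+$. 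Combined with the i.i.d.\ $\mathrm{Bernoulli}(1/2)$ outcomes of $\epsilon$ at each $\lattice_+$-visit, this yields $\P(\tau>n)\leq Ce^{-cn}$.

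For the skipped steps, I observe that skipping at $S$-time $n$ requires $Y_{n-1}=0$, i.e.\ $Z_{n-1}\in\lattice$. At any $S$-time at which the chain is in $\lattice$, the indicator of being skipped is the independent fair coin $\zeta_n$ (by (A3)), independent of the colour and active-step dynamics. Consequently, the number of $S$-steps between two consecutive active steps is stochastically dominated by a $\geom(1/2)$ random variable (with independent copies across different active-step intervals), so $L_T\leq\sum_{i=1}^\tau G_i$ for i.i.d.\ $\geom(1/2)$ variables $G_i$ independent of $\tau$. Since both $\tau$ and each $G_i$ have exponential tails, so does the sum: checking the moment generating function, $\E[e^{s L_T}]\leq\E\bigl[(\E[e^{sG}])^\tau\bigr]$ is finite for all sufficiently small $s>0$, from which $\P(L_T>n)\leq Ce^{-cn}$ follows.

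The only genuinely new element compared to \cref{L:good_tail} is the skip mechanism, and the main obstacle is a clean accounting of independence: we need the colour sequence $\xi$ restricted to active times to remain i.i.d.\ $\lambda$, the coin $\epsilon$ at active times to remain independent fair, and the $\zeta$-driven skip pattern to be independent of both. All three are supplied directly by (A2) and (A3), so no additional randomness is required; the proof then reduces to a discrete-time rerun of the arguments of \cref{sec.underconstrained} plus a routine tail bound for a random sum.
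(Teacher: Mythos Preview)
Your proof is correct. The paper's own proof is a single terse sentence: it argues directly in terms of $L_n$, claiming that once $L_n$ is large, $Z_n\in\cone_\alpha(V)$ with high probability, a positive fraction of subsequent steps land $Z'_n$ in $\lattice_+$, and $\epsilon_n=1$ then triggers the return---exactly as in \cref{L:good_tail}. You instead \emph{decompose} $L_T=\tau+\sigma$ into active and skipped steps, observe that the active-step subchain is literally the chain of \cref{L:good_tail} (so $\tau$ has exponential tail verbatim), and control the skipped overhead by the random-sum bound $L_T\le\sum_{i=1}^\tau G_i$ with i.i.d.\ $\geom(1/2)$ variables $G_i$ independent of $\tau$. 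The two routes rest on the same facts---in particular, the paper's claim that large $L_n$ forces $Z_n\in\cone_\alpha(V)$ implicitly uses that skipped steps cannot outnumber active ones by much, which is exactly your geometric overhead---but your explicit separation makes the independence bookkeeping cleaner: since the decision to skip at step $n$ depends only on $Y_{n-1}$ and $\zeta_n$, never on $(\xi_n,\epsilon_n)$, optional skipping preserves the i.i.d.\ law of $(\xi,\epsilon)$ on active steps, and the $G_i$ are driven by the independent $\zeta$ sequence. The paper's route is shorter; yours isolates the new ingredient (the $\zeta$-skips) and handles it by an elementary tail bound rather than folding it into the large-deviation estimate.
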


\begin{proof}
  This is proved by the exact same argument as \cref{L:good_tail}: With
  high probaility, once $L_n$ is large we have $Z_n\in\cone_\alpha(V)$.  A
  positive fraction of the time it is in $\lattice$ and hence also in
  $\lattice_+$.  Once $Z'_n\in\lattice_+$ and $\epsilon_n=1$ the process
  jumps to $0$.
\end{proof}

As before, if we set $Z_s=0$, and let $t$ be the next time at which
$Z_t=0$, we call $(s,t]$ a good block.  Thus the number of points of $S$ in
the good block starting at $s\in S$ has exponential tail.

\begin{coro}\label{C:exp_tail}
  For some $C,c>0$, the probability that there is some good block $(a,b]$
  containing $0$ and at least $n$ points of $S$ is at most $Ce^{-cn}$.
\end{coro}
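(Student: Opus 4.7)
The plan is to deduce the bound from \cref{L:exp_tail} via a standard size-biasing argument. Under the stationary coupling of $(\xi_n,\epsilon_n,\zeta_n)_{n\in\Z}$, the zeros of $Z$ enumerate as $\dots<B_{-1}<B_0<B_1<\dots$ and partition $\Z$ into good blocks $(B_i,B_{i+1}]$. Since the dynamics restart at each $B_i$ and, by assumption (A2), $\xi|_S$ is i.i.d., the $S$-counts $L_i := |S\cap(B_i,B_{i+1}]|$ form an i.i.d.\ sequence with the same law as the variable $L_T$ of \cref{L:exp_tail}. In particular, $\P(L_0\geq n)\leq Ce^{-cn}$.

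Let $(A,B]$ denote the (a.s.\ unique) good block containing $0$, set $L^* := |S\cap(A,B]|$, and write $\Lambda_i := B_{i+1}-B_i$ for the $\Z$-length of the $i$th block. Because $0$ is a deterministic point of $\Z$ and the partition of $\Z$ is stationary, the block $(A,B]$ is selected in a size-biased fashion weighted by its $\Z$-length. A standard Palm inversion identity then gives
\[
\P(L^*\geq n) \;=\; \frac{\E\!\left[\Lambda_0\,\mathbf 1\{L_0\geq n\}\right]}{\E\Lambda_0}.
\]
By (A1), the indicator of $S$ is ergodic and $S$ has positive asymptotic density, so $\Lambda_0$ is at most $L_0$ plus a sum of $L_0+1$ i.i.d.-like inter-$S$ gaps with bounded mean. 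A direct moment bound (or Cauchy--Schwarz, using the resulting exponential tail on $\Lambda_0$) then yields $\E[\Lambda_0\,\mathbf 1\{L_0\geq n\}]\leq C'e^{-c'n}$ for some $c'>0$, which gives the required exponential bound on $\P(L^*\geq n)$.

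The main obstacle is that $(Z_n)_{n\in\Z}$ is not itself a Markov chain -- only its time-change to $S$ is -- so the i.i.d.\ renewal structure of the pairs $(L_i,\Lambda_i)$ must be extracted by first conditioning on $S$ and then invoking (A2) to recover i.i.d.\ colours on $S$. A secondary technical point is to establish that the doubly-infinite partition of $\Z$ into good blocks exists and is stationary in the first place; this is the analog of \cref{P:unique} in the present modified setting, and can be obtained by combining the recurrence underlying \cref{L:exp_tail} with the ergodicity of the indicator of $S$ furnished by (A1).
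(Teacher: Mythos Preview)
Your approach diverges from the paper's and has a real gap.

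First, the corollary is about \emph{all} good blocks, not a distinguished one. At this point in the paper a ``good block'' is defined for every starting point: set $Z_a=0$ and run forward to the next zero. These blocks overlap, and many of them may contain $0$. The event in the corollary is the union over all such $a$ of ``the good block from $a$ contains $0$ and has $\ge n$ points of $S$''. You instead assume the unique stationary partition (the content of \cref{P:unique2}, proved only later) already exists, and bound the $S$-count of \emph{that} block. Since the partition block is just one of many good blocks containing $0$, your event is a strict subset of the corollary's event, and an upper bound on its probability does not bound the larger one.

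Second, even for your weaker target, the moment step is unjustified. Palm inversion gives $\P(L^*\ge n)=\E[\Lambda_0\,\mathbf 1\{L_0\ge n\}]/\E[\Lambda_0]$, but to extract exponential decay via Cauchy--Schwarz you need $\E[\Lambda_0^2]<\infty$ (or some higher moment). Assumption (A1) only says the indicator of $S$ is ergodic; the inter-$S$ gaps need not be independent and need not have any moment beyond the first, so $\Lambda_0$ may fail to have a second moment, let alone an exponential tail. The phrase ``i.i.d.-like inter-$S$ gaps with bounded mean'' is not supported by (A1)--(A3).

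The paper's argument sidesteps both difficulties with a direct union bound requiring only \cref{L:exp_tail}. Enumerate the points of $S$ to the left of $0$ as $a_1>a_2>\cdots$. For each $m\le n$, the good block starting at $a_m$ has $\ge n$ points of $S$ with probability at most $Ce^{-cn}$; for $m>n$, the good block from $a_m$ can reach $0$ only if it contains at least $m$ points of $S$, which has probability at most $Ce^{-cm}$. Summing over $m$ gives $nCe^{-cn}+\sum_{m>n}Ce^{-cm}\le C'e^{-c'n}$. No partition, no Palm theory, and no moment control on the gaps of $S$ are needed.
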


\begin{proof}
  There is a unique good block starting at each point of $S$.  For the $n$
  points just to the left of $0$, the probability that the corresponding
  block contains $0$ is at most $C e^{-cn}$ by the previous lemma.  For
  blocks starting further to the left, the probability of containing $0$
  decays exponentially, and the result follows by a union bound.
\end{proof}

We now deduce that the Markov chains are well behaved.

\begin{lemma}\label{L:MC_nice2}
  The transition probabilities of $(Z_n)$ and $(Y_n)$ define irreducible,
  and positive recurrent Markov chains.
\end{lemma}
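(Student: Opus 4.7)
The plan is to verify each property separately for $(Z_n)$ and for its projection $(Y_n)$, working throughout with the time-changed chains observed along $S$, whose one-step kernel is described by the bullet list above the lemma applied to $\xi_n$ with law $\lambda$ and independent fair coins $\epsilon_n,\zeta_n$ (this is the conditional distribution at any $n\in S$ by assumptions (A2)--(A3)).

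For irreducibility of $Z$, I would first show that $0$ is reachable from every $x\in\N^q$: since $\lattice$ has full rank and $\cone(V)^\circ$ is nonempty, there is a matchable $y\in\lattice_+$ with $y\geq x$ coordinate-wise, and with positive probability the next few $S$-steps append exactly the missing coordinates of $y-x$ with $\epsilon=0$ throughout, and then a single $\epsilon_n=1$ at the matchable state triggers a jump to $0$. One small wrinkle is that while at the coset $0$ we must have $\zeta=1$ at the relevant step so as not to skip, but this has probability $1/2$ and so only changes a constant. Conversely, from $0$ any target $x\in\N^q$ is reached by choosing the right $\|x\|_1$ colours in sequence with $\zeta_n=1$ on the first step (to leave $0$) and $\epsilon_n=0$ throughout (to avoid matching, whether or not intermediate states happen to be matchable).

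For positive recurrence of $Z$, \cref{L:exp_tail} already gives that the number of $S$-steps $L_T$ between successive visits of $(Z_n)$ to $0$ has an exponential tail; in particular $\E L_T<\infty$, so $0$ is positive recurrent in the chain $(Z_n)_{n\in S}$, and irreducibility extends this to every state. For $Y$, the state space $\Gamma=\Z^q/\lattice$ is finite because $\lattice$ has full rank. From $Y_{n-1}=0$ the chain stays at $0$ with probability $1/2$ (via $\zeta_n=0$) and otherwise moves to $[e_{\xi_n}]$, while from any other state it moves to $Y_{n-1}+[e_{\xi_n}]$; since the cosets $[e_1],\dots,[e_q]$ generate $\Gamma$ (as the $e_i$'s generate $\Z^q$), the chain is irreducible, and a finite irreducible chain is automatically positive recurrent.

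I do not anticipate a real obstacle here: the argument is a direct adaptation of \cref{L:MC_nice} in the modified setting, and the new ingredients (the skipping rule and the time-change along $S$) only require minor book-keeping. The one point worth stressing in the write-up is that ``Markov chain'' here refers to the process indexed by $S$, not by $\Z$, so that ``return time'' in the sense of positive recurrence corresponds to $L_T$ rather than $T$, which is exactly the quantity controlled by \cref{L:exp_tail}.
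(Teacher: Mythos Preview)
Your proposal is correct and follows essentially the same approach as the paper's (very terse) proof: irreducibility of $Y$ via the cosets $[e_i]$ generating $\Gamma$, irreducibility of $Z$ by routing through $0$, and positive recurrence of $Z$ via the exponential tail on the $S$-indexed return time from \cref{L:exp_tail}. One small imprecision: in your path from $0$ to a given $x$ you only force $\zeta=1$ on the first step, but you may pass through further states in $\lattice$ along the way and need $\zeta=1$ there too; since you already flagged the analogous wrinkle in the $x\to 0$ direction this is clearly just an oversight in the write-up.
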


\begin{proof}
  For $n\in S$, we have $Y_n = Y_{n-1}+e_{\xi_n}$ unless $Y_{n-1}=0$ and
  $\zeta_n=0$, which gives an irreducible Markov chain.  Since it is
  possible for the chain $Z_n$ to reach $0$ from any state, and reach any
  state from $0$, it is also irreducible.  As in \cref{L:MC_nice}, the
  return times to $0$ have exponential tail, hence $Z_n$ is positive
  recurrent.
\end{proof}

\begin{prop}\label{P:unique2}
  Almost surely, there is a unique partition of $\Z$ into good blocks.
  Moreover, there are unique doubly infinite processes
  $(Z_n)_{n\in\Z}$, $(Y_n)_{n\in\Z}$ consistent with the sequences $\xi$,
  $\eps$ and $\zeta$.
\end{prop}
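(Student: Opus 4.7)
My plan is to prove \cref{P:unique2} by adapting the argument for \cref{P:unique}, with additional care to handle the skipping mechanism introduced by $\zeta$. The key step is to establish a coupling lemma; the rest then proceeds much as in the unmodified case.

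First I will establish the analogue of \cref{L:couple}: two instances $(Z_n)$ and $(\tilde Z_n)$ of the modified chain driven by the same $(\xi_n,\eps_n,\zeta_n)$ but with different initial states almost surely agree from some finite time onward. This proceeds by showing that from any pair of states the pair chain $(Z_n,\tilde Z_n)$ reaches $(0,0)$ in bounded time with positive probability; iterating gives a.s.\ coupling. I would execute this in three sub-stages. (a) Bring the coset pair $(Y_n,\tilde Y_n)$ to $(0,0)$: since the skip condition depends only on each chain's own coset, one can hold one chain frozen at $Y=0$ via $\zeta=0$ while the other performs an irreducible random walk on the finite group $\Gamma$ until it too hits $0$. (b) Once $(Y_n,\tilde Y_n)=(0,0)$ at some time $n_0$, evolve with $\zeta=1$ and $\eps=0$ for a long enough window; both $Z$ and $\tilde Z$ receive the same $e_{\xi_n}$ increments without resetting, and the concentration argument of \cref{L:in_cone_c} puts both inside $\cone_\alpha(V)$. (c) At the next moment when the coset returns to $0$ and $\eps_n=1$, both $Z+e_{\xi_n}$ and $\tilde Z+e_{\xi_n}$ lie in $\lattice\cap\cone_\alpha(V)\subset\lattice_+$ by \cref{L:dual}, and so reset simultaneously to $0$.

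With the coupling lemma in hand, uniqueness of the partition follows as in \cref{P:unique}: if multiple partitions existed, each would correspond to a full chain trajectory, and by \cref{C:exp_tail} only finitely many good blocks contain $0$, so only finitely many values of $Z_0$ arise. All such trajectories agree from some time onward, and the minimal merging time $M$ satisfies the equivariance $M(\theta_k\cdot)=M(\cdot)-k$; by stationarity $\P(M=m)$ would then be independent of $m\in\Z$, so $M$ cannot be a.s.\ finite, a contradiction. For existence, I would run the chain from $Z_{-T}=0$ and let $T\to\infty$; the coupling lemma ensures that for each fixed $n$ the value $Z_n$ stabilizes almost surely, defining a doubly infinite process consistent with the recursion, whose zero set gives the block endpoints. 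The process $(Y_n)$ is then its image in $\Gamma$.

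The main obstacle is sub-stage (c) of the coupling: forcing the two chains to reset \emph{simultaneously} despite their $\N^q$ values possibly differing. The essential ingredient making this work is \cref{L:dual}, which ensures that once both chains are sufficiently deep inside $\cone(V)$, landing in $\lattice$ automatically puts them in the matchable set $\lattice_+$, so that the reset events of the two chains become synchronized.
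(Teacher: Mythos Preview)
Your proposal is correct and follows the paper's route: establish that any two copies of the modified chain driven by the same $(\xi,\eps,\zeta)$ eventually coalesce, then repeat the argument of \cref{P:unique}. The paper's own proof is only two sentences, deferring entirely to \cref{L:couple} and \cref{P:unique}; your three-stage coupling (synchronize the cosets via $\zeta$-skips, drive both chains into $\cone_\alpha(V)$ with $\zeta=1,\eps=0$, then reset simultaneously via \cref{L:dual}) spells out exactly the adaptation needed when $\lattice\neq\Z^q$, and your limiting construction for existence is a valid alternative to the stationary-chain argument used in \cref{P:unique}.
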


\begin{proof}
  As in the proof of \cref{L:couple}, starting $Z$ and $Z'$ at any two
  initial states and running them using the same sequences $(\xi_n)$,
  $(\epsilon_n)$, $(\zeta_n)$, we have that $Z_n=Z'_n$ from some time
  on.
  The proof of \cref{P:unique} now applies.
\end{proof}

\paragraph{Partial matchings.} For a process $(\xi_n)$ taking values in
$\{\emptyset,1,\dots,q\}$, a \textbf{partial matching} is a translation
equivariant partition of some
subset of $\{n : \xi_n\neq\emptyset\}$ into finite sets so that the values
of $\xi$ in each set are a valid family type, with some points possibly
left unmatched.  This may be defined formally similarly to matchings of
Poisson processes, and we omit a detailed definition.  Given a partial
matching, we let $X$ denote the diameter of the family of $0$ if $0$ is
matched, and set $Z=0$ if $\xi_0=\emptyset$ or if $0$ is unmatched.  Given
a partition of $S=\{n : \xi_n\neq\emptyset\}$ into skipped points and good
blocks, there is a partial matching of the unskipped points with families
contained in good blocks.

Let $\widehat S$ denote the sets of $n$ such that a point at $n$ is skipped.
For sequences $\xi,\epsilon,\zeta$ satisfying assumptions (A1)--(A3), the
process $1_{\widehat S}$ is ergodic, and has intensity $\alpha$ times the
intensity of $\xi$, for some $\alpha<1/2$. (It is not hard to find
$\alpha=(1+|\Gamma|)^{-1}$ by analyzing the Markov chain $Y$.)

\paragraph{Discretization.}
Having analyzed the modified Markov chains, we are now ready to continue
constructing our factor matching.  We first move to a discretized
version of the Poisson process $\Pi$.  We will then introduce partial
matchings and use the discrete process to construct iteratively our factor
matching.  We
conclude the proof by analyzing the tail of $X$ for the resulting matching.

Assume again that $\sum\lambda_i=1$.  We replace the Poisson process by a
discretized version with some additional random variables reserved for
later use.  Label the points of $\Pi$ by the integers $\Z$ in order, with
the maximal point in $(-\infty,0]$ having label $0$.  (Under $\P^*$ this
point is at $0$.)  Let $T_n$ be the location of point $n$, so that $T_0
\le 0 < T_1$ and the gaps $T_n-T_{n-1}$ for $n\neq1$ are
i.i.d.\ $\Exp(1)$ random variables.  Under $\P^*$, the gap $T_1-T_0$ is
no different, though under $\P$ it is biased by its size.  We now create
a discrete process $\hat\Pi$, by rounding each gap up to an integer,
i.e., define the new locations of points by $\hat T_0 = \lceil T_0
\rceil$ and $\hat T_n-\hat T_{n-1} = \lceil T_n-T_{n-1}\rceil$ for
$n\in\Z$.  Let $\hat\Pi = (\hat\Pi_1, \dots, \hat\Pi_q)$, where
$\hat\Pi_i$ is the resulting process of points of colour $i$ supported
within $\Z$.

Note that if $X=\Exp(1)$ then $\lceil X\rceil$ is a geometric random
variable, and therefore the gaps in $\hat\Pi$ are geometric.  Moreover,
$-\lceil T_0 \rceil = \lfloor -T_0 \rfloor \eqd \lceil X\rceil-1$, so that
$\hat T_1 - \hat T_0 \eqd \lceil X\rceil + \lceil X'\rceil -1$ is a size
biased geometric.  It follows that every point is present in $[\hat\Pi]$
independently with the same probability (which happens to be $1-1/e$),
except that $0\in[\hat\Pi]$ almost surely.

Observe that the fractional part of an exponential is independent of its
integer part.  For each $n\in[\hat\Pi]$ let $U_{\hat T_n}$ be the
fractional part of $T_n-T_{n-1}$.  Conditioned on $\hat\Pi$, the
$(U_n)_{n\in[\hat\Pi]}$ are i.i.d..  Thus we have constructed a coloured
Bernoulli process on $\Z$, where each occupied point is also assigned some
independent continuous random variable.  These variables can be used for
making random choices associated with $n$ as a function of the original
process $\Pi$.

\paragraph{Iterative matching.}
We are ready now to describe the complete construction of the factor
matching. The procedure we describe will have an infinite sequence of
stages.  At each stage $s$ there will be sequences $\xi^{(s)}$,
$\epsilon^{(s)}$ and $\zeta^{(s)}$ as above. The Bernoulli variables
$\epsilon^{(s)}_n$ and $\zeta^{(s)}_n$ will be functions of $U_n$.  The
sequences $\xi^{(s)}$ are more delicate.

The set $S_s$ defined below will be equal to the set $\{n : \xi^{(s)}_n
\neq \emptyset\}$.  These
sequences will satisfy assumptions (A1)--(A3), and so \cref{P:unique2}
applies, and there is a unique partition of $\Z$ into good blocks.  Each
block contains a matchable set of points and some skipped points.  The
sequences $\xi^{(s)}$ will be defined
inductively.  If $n$ has been matched in some stage prior to $s$ then
$\xi^{(s)}_n=\emptyset$.  However, we sometimes set $\xi^{(s)}_n=\emptyset$
also for unmatched points, in order to reserve a point for later stages.

For each $n\in[\widehat\Pi]$, from $U_n$ we define a $\geom(1/2)$ variable
$(G_n)_{n\in\widehat\Pi}$, as well as sequences of i.i.d.\ Bernoullis
$(\epsilon_n^{(s)})_{s\in\N}$ and $(\zeta_n^{(s)})_{s\in\N}$.  We now
define the sets $S_s$ inductively as follows.  The set $S_s$ consists of
all $n$ that were skipped at stage $s-1$, as well as all points with
$G_n=s$.  Note that there is no stage $0$, so no points are skipped at
stage $0$.

More precisely, we define $S_s$
inductively: At stage $1$ we have $S_1 = \{n : G_n = 1\}$.  At each stage
we define
\[
\xi^{(s)}_n = \begin{cases}
  \emptyset & n \notin S_s, \\
  i & n \in S_s, n\in \hat\Pi_i.
\end{cases}
\]
We generate a partial matching using $\xi^{(s)}$, $\epsilon^{(s)}$ and
$\zeta^{(s)}$.  We denote by $\widehat S_s \cup \{n
: G_n = s+1\}$ (see \cref{F:iterations}).
Thus $n\in S_s$ if $G_n \leq s$ and $n$ has not been matched in any
previous stage.
 If $n\in S_s$ then we say that $n$ is \textbf{active} at stage $s$.

\begin{figure}
  \begin{center}
    \includegraphics[width=.85\textwidth]{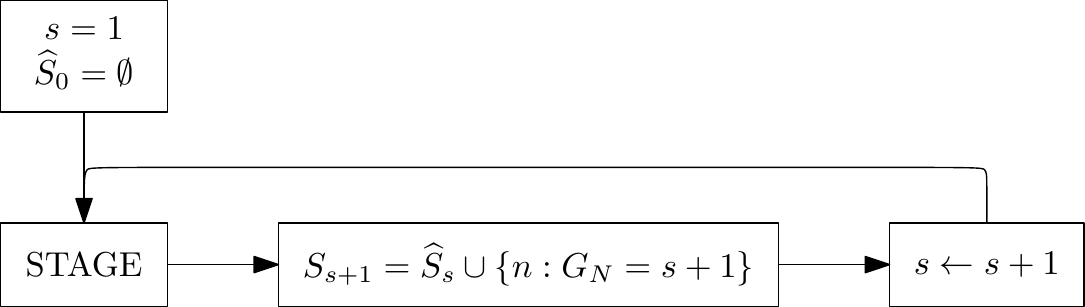}
  \end{center}
  \caption{The iterations in the construction of the factor matching. Here
    "STAGE" refers to defining a partial matching on points in $S_s$, and
    setting $\widehat S_s$ to be the set of skipped points.}
  \label{F:iterations}
\end{figure}

\paragraph{Family size distribution.}
We are finally able to complete our proof of
\cref{T:factor}\ref{factor_cauchy}.

\begin{proof}[Proof of \cref{T:factor}\ref{factor_cauchy}]
  The lower bound is given by \cref{L:factor_lower}.  For the upper bound,
  we analyze the sequential matching procedure described above.  First,
  note that for any $n\in\hat\Pi$ there is a.s.\ some $s\geq G_n$ with
  $\zeta^{(s)}_n=1$, so a point at $n$ is a.s.\ matched at some stage, and
  the iterative procedure indeed yields a factor matching.  It remains to
  estimate the tail of $X$ under $\P^*$.

  Recall that under $\P^*$, there is a point at $0$, and let $\hat X$ be the
  diameter of its family in the discrete process.  Points of $\hat\Pi$ are
  in natural correspondence with points of $\Pi$, and distances in
  $\hat\Pi$ are larger, so deterministically $X\leq\hat X$.  Note that
  $\hat\Pi$ is not ergodic since there is always a point at $0$.  However,
  it is an ergodic process conditioned to have a point at $0$.  Since this
  condition has positive probability for the discrete process, any almost
  sure statement about the ergodic process applies also to $\hat\Pi$, and
  any bound on a probability holds with a constant factor.  We therefore
  consider from now on a process where there is a point at $0$ with the
  same probability as any other $n$.

  The idea for bounding the probability that $\hat X$ is large, is that one
  of several things must
  happen.  Either $0$ is matched at a late stage, or else the good block
  containing $0$ at an early stage $s$ is atypically large.  In the latter
  case, either there are many active points in the good block, or there are
  unusually few.  We show that all of these are unlikely.

  Let us consider the process $1_{S_s}$ of points which are active at stage
  $s$. This is an ergodic process, and our first task is to compute its
  intensity, denoted $\beta_s$.  Let $\gamma$ be the intensity of $\hat\Pi$
  (which happens to be $(1-1/e)$, though the value is not important to us).
  Then $\beta_1 = \gamma/2$, as these are the points of $\hat\Pi$ with
  $G_n=1$.  Each point of $S_s$ is skipped, and so in $\hat S_s$ with
  probability $\alpha$,
  so recursively, $\beta_s = \alpha \beta_{s-1} + 2^{-s}\gamma$, with
  $2^{-s}\gamma$ being the intensity of points of $\hat\Pi$ with $G_n=s$.
  This leads to $\beta_s = \frac{\gamma}{1/2-\alpha} (2^{-s}-\alpha^s) \leq
  C2^{-n}$ for some universal constant $C$. (Recall that $\alpha<1/2$.)

  Fix some $t$.
  We split the event $\hat X>t$ according to the stage at which $0$ is
  matched.  If this stage is $s$, then necessarily $0\in S_s$.
  Let $I\subset\Z$ be the good block in stage $s$ containing $0$.  We have
  \[
  \P^*\big(X \geq t\big) \leq \P^*\big(\hat X \geq t\big)
  \leq \sum_s \P^*\big(0\in S_s, |I| \geq t\big)
  \]

  Now for any $s$ we have
  $\P^*(0\in S_s) = \beta_s/\gamma \leq (C/\gamma) 2^{-s}$ (the factor of
  $1/\gamma$ is since $\P^*$-almost surely $0\in\hat\Pi$).  Thus the
  contribution to the above sum from $s$ with $2^s>t$ is at most
  $\frac{2C}{\gamma t}$.  Let us focus on $\P^*(0\in S_s, |I| \geq t)$ for
  smaller $s$.

  For some $\delta>0$ to be specified below, let
  $B_s = \{|S_s\cap I| > \delta\beta_s t\}$ be the event that $I$ contains
  many points of $S_s$.  By \cref{C:exp_tail},
  $\P(B_s|0\in S_s) \leq C_1 e^{-c_2\delta \beta_s t}$ for some constants.

  Let $D_s$ be the event that at least one of $(0,t/2]$ and $[-t/2,0)$
  contains at most $\delta \beta_s t$ points of $S_s$.  The set $S_s$
  includes all $n$ with $G_n=s$, which is a Bernoulli percolation with
  intensity $2^{-s}\gamma$.  Thus the number of points of $S_s$ in an
  interval $J$ dominates a binomial $\Bin(|J|, 2^{-s}\gamma)$.  We fix
  $\delta$ so that $2^{-s}\gamma t/2 > 2\delta\beta_s t$, i.e. the binomial
  gives in expectation twice as many points as are allowed on the event
  $D_s$.  By a standard large deviation estimate for binomials,
  $\P(D_s|0\in S_s) \leq C_3e^{-c_4\beta_s t}$.

  Observe that $0\in S_s\cap\{|I| \geq t\}$ implies $B_s\cup D_s$, and
  therefore
  \begin{align*}
    \P(0\in S_s\cap\{|I| \geq t\})
    &\leq \P(0\in S_s) \left[ \P(B_s|0\in S_s) + \P(D_s|0\in S_s) \right] \\
    &\leq (C/\gamma) 2^{-s} \left[ C_1 e^{-c_2\delta \beta_s t} +
      C_3e^{-c_4\beta_s t} \right] \\
    &\leq C_5 2^{-s} e^{-c_6 t/2^s}.
  \end{align*}
  It is easy to verify that summing this over $s$ with $2^s\leq t$ gives a
  total of order $1/t$.
\end{proof}

\section{The critical case}
\label{sec.critical}

To complete the proofs of \cref{T:main,T:factor}, we turn to the critical case.
As noted above, the simplest example of this case is matching points of two
Poisson processes of equal densities. In this case the upper and lower
bounds were proved in \cite{HPPS} (in all dimensions). Our proof of the
lower bound follows a similar argument to the one in \cite{HPPS}. To prove
the upper bound, we reduce the general critical case to the two colour
case.  The upper bound of \cref{T:main}(ii), as well as
\cref{T:factor}\ref{factor_crit} both follow from the construction in
\cref{L:critical_upper}.

\subsection{Upper bound: constructing a matching}

\begin{lemma}\label{L:critical_upper}
  With the notations of \cref{T:main}, suppose $\lambda
  \in \partial(\cone(V))$.  Then for some $C$, there exists a
  translation-invariant $V$-matching scheme such that for all $r$
  \[
  \P^*(X>r)\leq \begin{cases} C r^{-d/2} & d\leq 2, \\ e^{-C r^d} & d>2.
  \end{cases}
  \]
\end{lemma}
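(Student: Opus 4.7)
The plan is to reduce, via hyperplane separation, first to a matching on the zero-charge face of $V$, then by independent thinning to the single-family-type case, and finally to iterated critical two-colour matchings of \cite{HPPS}. Since $\lambda\in\partial\cone(V)$, \cref{T:hyperplane} yields a supporting charge $\eta\in\R^q\setminus\{0\}$ with $\eta\cdot\lambda=0$ and $\eta\cdot v\leq 0$ for every $v\in V$. Writing $\lambda=\sum_v a_v v$ with $a_v\geq 0$ and using $0=\eta\cdot\lambda=\sum a_v(\eta\cdot v)$ forces $a_v=0$ whenever $\eta\cdot v<0$, so $\lambda\in\cone(V_0)$ with $V_0:=\{v\in V:\eta\cdot v=0\}$. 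Repeating the mass-transport argument of clause~\ref{c:unsatisfiable} with the non-negative charge $\eta+K\mathbf 1$ now produces equality throughout, which forces $\sum_{x\in F}\eta(x)=0$ for almost every family $F$; hence any invariant $V$-matching is in fact a $V_0$-matching, and it suffices to construct one such. A decomposition $\lambda=\sum_{v\in V_0}a_v v$ with $a_v>0$, together with independent thinning of each $\Pi_i$ into Poisson pieces of intensities $a_v v_i$ indexed by $v\in V_0$, reduces further to the single-type setting $V_0=\{v^*\}$ with $\lambda=a v^*$.

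In this single-type setting I would first build the matching in dimension $d=1$. For each colour $i$ with $v^*_i\geq 2$, apply the underconstrained one-colour HPPS matching to group the points of $\Pi_i$ into $v^*_i$-tuples of exponentially small diameter, and let $\widetilde\Pi_i$ be the resulting process of representatives (for instance centroids), of intensity $a$. It remains to match the $q$ processes $\widetilde\Pi_i$ into $q$-tuples containing one point of each colour. For $q=2$ this is precisely the critical two-colour setting of \cite{HPPS}, which yields $\P^*(X>t)\leq C/\sqrt t$ in $d=1$. For $q\geq 3$ I would iterate: match $\widetilde\Pi_1$ to $\widetilde\Pi_2$, then match pair representatives against $\widetilde\Pi_3$, and so on. Over the $O(q)$ stages the triangle inequality yields a family diameter whose tail matches that of a single critical two-colour sub-matching, up to a constant factor.

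To obtain the dependence on $d$, the plan is to perform the above construction in $d=1$ and lift to $d\geq 2$ using the Hilbert-curve bijection of \cref{L:spacefill}, exactly as in \cref{L:dim_red}: the one-dimensional tail $C/\sqrt t$ is transformed into $\P^*(X>r)\leq C r^{-d/2}$, matching the claim for $d\leq 2$ and supplying the ``Hilbert curve construction for $d\geq 2$'' promised by the table. For $d\geq 3$ the Hilbert lift only yields $r^{-d/2}$, whereas the lemma demands $e^{-Cr^d}$; for these dimensions I would instead carry out all two-colour sub-matchings natively in $\R^d$, invoking the HPPS critical two-colour matching (which has exponential tail in $d\geq 3$) and the HPPS one-colour grouping (exponential in every $d$). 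The face reduction and the iteration both preserve the exponential form, so the final family diameter again has tail $e^{-C r^d}$.

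The hard part will be preserving the Poisson hypothesis through the iteration: after the first sub-matching, the process of pair representatives is no longer Poisson, so the HPPS results cannot be invoked as a black box at later stages. The remedy I would adopt is to pre-split the original Poisson processes by independent thinnings at the outset, reserving a fresh pair of independent homogeneous Poisson processes of the correct equal intensity for each of the bounded number of sub-matchings; every invocation of HPPS then sees honest Poisson inputs, at the cost of only constant factors in the intensities and hence in the final tail constants. Verifying that the family-diameter tails survive this thinning bookkeeping, together with the triangle-inequality chaining of diameters over the finitely many stages, will be the technical core of the argument.
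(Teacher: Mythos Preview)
There is a genuine gap, and it occurs earlier than where you flag it. The representative processes $\widetilde\Pi_i$ obtained by one-colour grouping of $\Pi_i$ into $v^*_i$-tuples and passing to centroids are \emph{not} Poisson (they are translation-invariant factors of $\Pi_i$ with built-in spacing constraints), so already the very first two-colour invocation of \cite{HPPS}---even when $q=2$---is unjustified. Your ``hard part'' paragraph only notices the later failure, where pair representatives lose the Poisson property during the iteration, and your pre-splitting remedy is too vague to assess: you do not say what the ``fresh pair of Poisson processes'' for a later stage actually is, nor how those fresh pieces are tied back to the families begun at earlier stages. Since the output must partition the \emph{original} points of $\Pi$, any thinning has to be reconciled with the group-then-match order you have already committed to, and it is not clear how.

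The paper avoids both problems by reversing the order of operations. For a single type $v^*$ it does not group first; instead it thins each $\Pi_i$ into $v^*_i$ independent Poisson sub-processes of intensity $a$, thereby reducing to the all-ones type $(1,\dots,1)$ on $q':=\sum_i v^*_i$ colours. For that case it fixes one process $\Pi_1$ as a hub and, conditionally on $\Pi_1$, takes $q'-1$ independent samples from the HPPS two-colour scheme; the family of a hub point $x$ is then $x$ together with its $q'-1$ partners. Every call to \cite{HPPS} now sees an honest pair of independent Poisson processes, and the diameter bound follows from a union over the finitely many sub-matchings. All of this is carried out directly in $\R^d$ using the $d$-dimensional HPPS result, so neither your Hilbert-curve lift nor the preliminary reduction to the zero-charge face $V_0$ is needed (both are correct but superfluous here).
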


\begin{proof}
  Our starting point is \cite[Theorem~1]{HPPS} which shows that there
  exists a two-colour matching between equal intensity Poisson process with
  the desired tail bounds. In our notations, this is the case $q=2$, with
  $\lambda_1 = \lambda_2$ and $V=\{(1,1)\}$. We proceed to generalize this
  in several steps.

  The next case we consider is $V=\{(1,1,\dots,1)\}$, i.e.\ there is just
  one family type, consisting of one point of each colour. Since $\lambda
  \in \cone(V)$, all the Poisson processes must have the same intensity,
  which without loss of generality we may assume is $1$. To construct the
  required matching, start with a Poisson process with unit intensity
  $\Pi_1$.  The two-colour matching conditioned on
  $\Pi_1$ gives a law for a second Poisson process together with a matching
  between its points and those of $\Pi_1$. Take $q-1$ independent samples
  from this conditional law.  Together with $\Pi_1$ we now have $q$
  independent Poisson
  processes as well as matchings between $\Pi_1$ and each of the others.
  This gives a natural partition of the points of all $q$ Poisson processes
  into valid families.  Since there are only finitely many colours, up to
  constants this matching scheme has the same tail behaviour as the
  two-colour scheme: $\P^*(X>r) \leq (q-1)\P^*(X'>r/2)$, where $X'$ is the
  distance in the two-colour matching.

  The next step is the case $V=\{\bv\}$, where there is just a single
  family type.  Note that necessarily $\lambda = a \bv$ for some $a>0$.  To
  construct the matching, let $q' = \sum \bv^i$, and start with a
  $q'$-colour matching where all colours have intensity $a$ and all
  families have type $(1,1,\dots,1)$. Such a matching scheme exists by the
  previous paragraph. Next, partition the $q'$ colours into $q$ classes,
  with $\bv^i$ colours in the $i$th class. Let $\Pi_i$ be the sum of the
  Poisson processes of colours in the $i$th class. Clearly taking the
  resulting Poisson processes with the same partition to families yields
  the resulting matching scheme, with the same distribution for $X$.

  Finally we consider the general case. Since $\lambda\in\cone(V)$, we can
  write $\lambda = \sum_{j=1}^k a^j \bv^j$ for some non-negative
  coefficients $a^j$. For each $j$, consider an independent matching scheme
  $\M_j$ as above with a single family type $\bv^j$, of Poisson processes
  $\{\Pi_{i,j}\}_{i\leq q}$ with intensities given by the vector $a^j
  \bv^j$. Let $\M = \sum_j \M_j$ and $\Pi_i = \sum_j \Pi_{i,j}$. Then $\M$
  is a valid matching scheme of Poisson processes with intensities given by
  $\lambda$. Finally, $\P^*(X>r) \leq \max_j \P^*(Z_j>r)$ has the required
  tail.
\end{proof}

\begin{lemma}\label{L:factor_critical}
  If $d=1$ and $\lambda\in\partial\cone(V)$, there is a factor matching
  with $\P^*(X>t) \leq C/\surd{t}$.
\end{lemma}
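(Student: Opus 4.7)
The plan is to adapt the proof of \cref{L:critical_upper} to the factor setting. That proof reduces the critical case to two-colour matching through: (a) the HPPS two-colour matching as the base case, with $\P^*(X>t)\leq C/\surd t$; (b) reducing $V=\{(1,\ldots,1)\}$ with equal intensities to (a) via conditional sampling; (c) reducing single family types $V=\{\bv\}$ to (b) by refining colours; and (d) handling general $V$ by decomposition. The HPPS construction in (a) can be taken to be a factor in dimension one, but the steps (b)--(d) involve conditional sampling and Poisson splittings that are not factors, and must be adapted.

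For step (b), I would replace the conditional sampling by a \emph{star construction}: for each $j=2,\ldots,q$, apply the two-colour factor matching to the pair $(\Pi_1,\Pi_j)$ independently, obtaining a bijection $\phi_j\colon[\Pi_1]\to[\Pi_j]$, and declare the family of each $x\in[\Pi_1]$ to be $\{x,\phi_2(x),\ldots,\phi_q(x)\}$. This is manifestly a factor of $(\Pi_1,\ldots,\Pi_q)$, and a union bound over $j$ together with the triangle inequality preserves the $C/\surd t$ tail (up to a factor $q-1$).

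For step (c) with $V=\{\bv\}$ and $\lambda=a\bv$, rather than splitting $\Pi_i$ into $\bv_i$ independent sub-Poissons (which cannot be done as a factor), I would \emph{group} the points of each $\Pi_i$ into packets of $\bv_i$ consecutive points, using the single-colour factor construction of \cref{T:factor} part (iii) applied with $V'=\{(\bv_i)\}$ on one colour of intensity $\lambda_i$. The resulting packet-head processes are stationary of intensity $a$, and the packet diameter has $C/t$ tail. I then apply the star construction of step (b) to the packet-head processes: a factor two-colour matching between colour-1 packets and colour-$j$ packets for each $j\geq2$ yields $q$-tuples of packets, which unpack to valid type-$\bv$ families. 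Since packet diameters have $C/t$ tail and packet-to-packet matching distances have $C/\surd t$ tail, the family diameter inherits the $C/\surd t$ tail. For (d), I would decompose $\lambda=\sum_j a^j\bv^j$ and split each $\Pi_i$ by family type using gap-extracted randomness (as in \cref{T:factor} part (ii)), then apply (c) to each family type $\bv^j$, the tail bounds combining by union.

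The main obstacle is verifying that the HPPS two-colour factor matching retains its $C/\surd t$ tail when applied to the stationary, non-Poisson processes arising from the packet grouping (and from the gap-extracted splittings in (d)). This amounts to revisiting the HPPS analysis to check that its ballot-style argument uses only the centred, $O(\surd t)$-fluctuating behaviour of the cumulative imbalance---a property preserved for our packet and sub-processes because their imbalances are sums of i.i.d.\ or near-i.i.d.\ contributions with finite variance. An alternative, less delicate approach would be a direct greedy Markov-chain construction as in \cref{T:factor} part (ii), but the projected walk is centred in only $\dim\cone(V)^\perp$ directions, so return times to $\lattice_+$ have $C/\surd t$ tail only when this dimension equals one (essentially the two-colour case), making the star reduction essential in any case.
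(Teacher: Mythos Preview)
Your overall strategy---reduce to the two-colour HPPS matching via a star construction and colour splittings, with gap-extracted randomness making the splittings into factors---is correct and is essentially what the paper does. The difference is in your step (c). You propose to group the points of each colour $i$ into packets of size $\bv_i$ via the single-colour factor matching of \cref{T:factor}\ref{factor_cauchy}, and then run the star construction on the resulting (non-Poisson) packet-head processes. This forces you into the obstacle you identify: re-proving the HPPS $C/\surd t$ tail for stationary but non-Poisson inputs.

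The paper sidesteps this entirely by using the \emph{same} gap-extracted randomness device you already invoke in step (d) to carry out the colour refinement of step (c) as well. After discretizing to $\hat\Pi$ with attached uniforms $U_n$, each point of colour $i$ is assigned (as a function of $U_n$) to one of $\sum_j \bv^j_i$ sub-processes, with $\bv^j_i$ of them having intensity $a^j$ for each $j$. Conditionally on $\hat\Pi$ these sub-processes are independent Bernoulli, so the discrete HPPS two-colour matching applies to pairs of them directly with its standard tail bound---no extension needed. The star construction (your step (b)) then assembles families of each type $\bv^j$ exactly as you describe.

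So your route is plausible but strictly more work: you carry an extra layer of construction (packets, with their own $C/t$ tail to track) and must genuinely extend the HPPS ballot argument to non-Poisson processes, which---while believable for the reasons you give---is not free. The paper's observation is simply that one application of the $U_n$ variables can perform both the family-type decomposition and the within-type colour refinement at once, keeping everything in the i.i.d.\ setting and making your obstacle evaporate.
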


To prove this, we adopt the proof of \cref{L:critical_upper}, together with
some ideas from the proof of \cref{T:factor}.

\begin{proof}
  First, note first that the matching in \cite{HPPS} in the one dimensional
  two colour case is a factor.  (Recall, this matching recursively matches
  a red point to a blue point immediately to its right and removes the
  pair.)  This has the required tail.  This matching is also valid for a
  discretized process with points on a subset of $\Z$, and has the same
  tail for $X$.

  As above, we can write $\lambda = \sum a^j v^j$.  We would like to split
  the process $\Pi_i$ as a sum of processe $\Pi_{i,k}$ for
  $k\leq \sum_j v^j_i$, with $v^j_i$ of these having intensity $a^j$.  We
  can then group processes of different processes into groups associated
  with the family types.  The group for $v^j$ will include the $v^j_i$ of the
  processes $\Pi_{i,j}$ with intensity $a^j$.  Finally, within each family
  we use the two colour matching to match points of the first process with
  points of the others, giving families of type $v^j$.

  Spliting the processes as above requires additional randomness.  To get a
  factor matching with the same tail behaviour, recall from the proof of
  \cref{T:factor}\ref{factor_exp} the discretized process $\hat\Pi$, where
  each point is also assigned an independent continuous random variable
  $U_n$, independent of $\hat\Pi$.  The $U$ variables can be used to split
  the points into sub-processes, where a point is in $\hat\Pi_{i,j}$ with
  probability proportional to $a^j$.  To these we apply the two colour factor
  matching.  Note that the two colour matching works in the same way for
  processes in $\Z$, and has the same tail behaviour for $X$.
\end{proof}

\subsection{Lower bound}\label{s:critical_lower_bound}

It remains to prove the lower bound in the critical case.  When $d>2$ the
bound is the same as in the underconstrained case, and the proof holds with
no change. Thus we are left with the cases $d=1,2$. The proof combines
ideas from \cite{HPPS} with consequences of $\lambda\in\partial\cone(V)$.

In preparation for proving the lower bounds, we introduce some
notations. If $\lambda\in\partial\cone(V)$, then by the supporting
hyperplane theorem, $\cone(V)$ has a supporting
hyperplane at $\lambda$, i.e.\ there is a non-zero $\eta\in\R^q$ with
\[
\eta\cdot\lambda = 0 \geq \eta\cdot \bv^i \qquad \forall i.
\]
We call $\eta_i$ the charge of a point of type $i$.  It is convenient to
denote by $\Psi_\eta$ the weighted measure $\sum \eta_i \Pi_i$.  The charge
of a set $A$ is $\Psi_\eta(A) = \eta \cdot (\Pi_i(A))_i$.  The exact same
mass transport that showed there is no invariant matching scheme in the
unsatisfiable case, shows that any translation invariant matching scheme
a.s.\ includes only family types with $\eta\cdot\bv=0$.

\begin{lemma}\label{L:lower1}
  If $d=1$ and $\lambda\in\partial\cone(V)$, then every
  translation-invariant $V$-matching scheme satisfies $\E^* \surd{X} =
  \infty$.
\end{lemma}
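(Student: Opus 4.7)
The plan is to emulate HPPS's two-colour argument using the charge function from the pre-lemma discussion: by the supporting-hyperplane theorem there is a nonzero $\eta\in\R^q$ with $\eta\cdot\lambda=0$ and $\eta\cdot\bv\le 0$ for all $\bv\in V$, and (as already observed) every family of an invariant $V$-matching satisfies $\eta(F)=0$.  I will work with the charge process $W(t):=\Psi_\eta((0,t])=\sum_i\eta_i\Pi_i((0,t])$, which by independence of the Poisson processes is centered with $\Var W(t)=ct$ for $c:=\sum_i\eta_i^2\lambda_i>0$ (positive since $\eta\cdot\lambda=0$ forces some $\eta_i\neq0$ to meet a positive $\lambda_i$).

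Decomposing $W(t)=\sum_F\eta(F\cap(0,t])$ using that every family is $\eta$-neutral, families entirely inside or outside $(0,t]$ contribute nothing, so only families straddling $\{0,t\}$ matter, and each contributes at most $M:=\|\eta\|_\infty\max_{\bv\in V}\|\bv\|_1<\infty$ in absolute value (finite since $V$ is finite).  Writing $J_s$ for the number of families straddling $s$, this gives $|W(t)|\le M(J_0+J_t)$, so by stationarity and $\E W(t)^2=ct$ we obtain $\E J_0^2\ge ct/(4M^2)$ for all $t$; in particular $\E J_0^2=\infty$.

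The crucial remaining step is the implication $\E J_0^2=\infty\Rightarrow \E^*\sqrt X=\infty$, which I would derive via Campbell--Mecke.  Writing $J_0=\sum_x\#\M(x)^{-1}\mathbf 1(\M(x)\text{ straddles }0)$ and expanding the square yields
\[
\E J_0^2=\lambda_{\mathrm{tot}}\,\E^*\Bigl[\tfrac{1}{\#\M(0)}\int_{-\max\M(0)}^{-\min\M(0)}J_x\,dx\Bigr],
\]
with an integration range of length $X$.  Swapping the integral with the sum defining $J_x$, each family $F$ contributes an overlap $|[\min F,\max F)\cap[-\max\M(0),-\min\M(0))|\le\min(\diam F,X)\le\sqrt{X\,\diam F}$ by the AM--GM inequality; this is where the factor $\sqrt X$ enters naturally.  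A second Campbell--Mecke estimate applied to $\sum_F\sqrt{\diam F}\,\mathbf 1(F\cap[-X,X]\neq\emptyset)$ then yields an upper bound on $\E J_0^2$ that is finite whenever $\E^*\sqrt X<\infty$, producing the required contradiction.

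The main obstacle will be this second Campbell--Mecke step: because $\M$ is not assumed to be a factor of $(\Pi_i)$, conditioning on $\M(0)$ alters the conditional matching elsewhere, so two-point Palm expressions are not direct products of one-point Palm expressions.  The resolution mirrors HPPS's handling of the two-colour case: Slivnyak's theorem for the underlying Poisson, combined with translation invariance of $\M$, reduces the two-point Palm expectation to a one-point expectation up to controllable error, closing the argument.
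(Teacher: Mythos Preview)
Your charge setup and the bound $|W(t)|\le M(J_0+J_t)$, hence $\E J_0^2=\infty$, are fine.  The gap is in the step ``$\E J_0^2=\infty\Rightarrow\E^*\sqrt X=\infty$''.  Your sketch reduces this to a two-point Palm expression and then appeals to Slivnyak, but Slivnyak is a statement about the Poisson process $\Pi$, not about the matching $\M$; since $\M$ is \emph{not} assumed to be a factor of $\Pi$, conditioning on the family of the origin can alter the law of $\M$ elsewhere in an uncontrolled way, and the two-point expectation does not reduce to a product of one-point Palm quantities.  Your remark that the resolution ``mirrors HPPS's handling of the two-colour case'' is not accurate: the HPPS lower bound in $d=1$ is a first-moment argument on a growing interval, not a two-point Palm computation, so there is nothing there to mirror.

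The paper's proof sidesteps all of this by replacing the single cross-section $J_0$ with the interval quantity
\[
Q_L:=\#\bigl\{x\in[\Psi]\cap[-L,L]:\ \M(x)\not\subset[-L,L]\bigr\}.
\]
Every family contained in $[-L,L]$ has total $\eta$-charge $0$, so the $Q_L$ points carry the entire charge $\Psi_\eta([-L,L])$; since each point has charge bounded by $\|\eta\|_\infty$, this gives $Q_L\ge c\,|\Psi_\eta([-L,L])|$ and hence $\E Q_L\ge c\sqrt L$.  On the other hand, a point $x\in[-L,L]$ contributes to $Q_L$ only if $\diam\M(x)>L-|x|$, and a single one-point Palm computation yields
\[
\E Q_L\;\le\;C\int_0^L\P^*(X>t)\,dt.
\]
Assuming $\E^*\sqrt X<\infty$ makes $t\mapsto\P^*(X>t)/\sqrt t$ integrable, and dominated convergence gives $\int_0^L\P^*(X>t)\,dt=o(\sqrt L)$, a contradiction.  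The whole argument is first-moment and uses only the one-point Palm formula, so the two-point obstacle you identified simply does not arise.
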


\begin{proof}
  Consider the graph with vertex set $[\Psi] \subset\R$, with an edge between
  every vertex $x$ to the (a.s.\ unique) leftmost and rightmost members of
  $\M(x)$.  For
  the Palm process, let $X' = \max\{|x|, x\in\M(0)\}$ be the length
  of the longest edge incident on the vertex at $0$. Note that $X' \leq X
  \leq 2X'$, so it suffices to prove $\E^* \sqrt{X'} = \infty$.

  Consider now $Q_L$: the number of points $x\in[\Psi]$ in $[-L,L]$ so that
  $\M(x)$ is not contained in $[-L,L]$. Since each family has total charge
  $0$, the points contributing to $Q_L$ have total charge
  $\Psi_\eta([-L,L])$, where $\Psi_\eta$ was defined above. Each point has a
  bounded charge, and hence $Q_L \geq c |\Psi_\eta([-L,L])|$.  By the
  central limit theorem, $\E Q_L \geq c\surd{L}$.

  On the other hand, a point $x\in[-L,L]$ can only count towards $Q_L$ if
  it is attached to distance at least $L-|x|$. Thus $\E Q_L \leq C
  \int_{-L}^L \P^*(X'>L-|x|) dx$. Combining these bounds we get
  \[
  \int_0^L \P^*(X'>t) dt \geq c\surd{L}.
  \]
  which we may write as
  \[
  \int_0^\infty \frac{\P^*(X'>t)}{\surd{t}} \left[1_{t<L} \sqrt{\frac{t}{L}}
    \right] dt \geq c.
  \]
  However, if $\E^* \sqrt{X'} < \infty$ then $\frac{\P^*(X'>t)}{\surd{t}}$
  is integrable, and by the dominated convergence theorem the last integral
  tends to $0$ as $L\to\infty$.
\end{proof}

We now turn to the case $d=2$. We again construct a graph on
$[\Psi]\subset\R^2$ and
also an embedding of the graph in $\R^2$. For two points $x,y$, let
$\dseg{xy}$ be the straight line segment from $x$ to $y$.  For any point
$x\in[\Psi]$, let $y$ be the leftmost point in $\M(x)$.  Then our graph has
a directed edge from $x$ to $y$, embedded in the plane along $\dseg{xy}$.
(This embedding may have crossing edges.  If $x=y$, there is a self-loop,
which will not play any role below.)

\begin{lemma}\label{L:locally_finite}
  For any translation-invariant matching scheme $\M$ with $\E^* X <
  \infty$, the number of such edges in the graph that intersect any bounded
  set $A$ has finite expectation.
\end{lemma}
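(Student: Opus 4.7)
The plan is a direct Campbell/Palm calculation. Any bounded set is contained in a ball $B(0,R)$, so by monotonicity it suffices to bound the expected number of edges $N_R$ meeting $B(0,R)$. Recall the graph has a single outgoing edge from every point $x\in[\Psi]$, going to $y(x)$, the leftmost element of $\M(x)$. Thus
\[
N_R \;=\; \sum_{x\in[\Psi]} \mathbf{1}\bigl\{\dseg{x\,y(x)}\cap B(0,R)\neq\emptyset\bigr\}.
\]

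The first step is to apply the Campbell/Mecke formula to this sum. Under the Palm measure $\P^*$ the typical point sits at the origin, and the edge from it points to some $y^*\in\R^2$ with $|y^*|\le X$. By stationarity, after shifting the typical point to an arbitrary location $v$, the associated edge runs from $v$ to $v+y^*$, and the joint law of $y^*$ does not depend on $v$. Hence
\[
\E N_R \;=\; \lambda \int_{\R^2} \P^*\!\bigl(\dseg{v,\,v+y^*}\cap B(0,R)\neq\emptyset\bigr)\,dv
\;=\; \lambda\,\E^*\!\Bigl[\mathrm{Area}\bigl(A(y^*)\bigr)\Bigr],
\]
where $A(y^*):=\{v\in\R^2 : \dseg{v,v+y^*}\cap B(0,R)\neq\emptyset\}$.

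The second step is purely geometric: $A(y^*)$ is the Minkowski sum $B(0,R)\oplus[-y^*,0]$, a stadium-shaped region whose area is $\pi R^2+2R|y^*|$. (For a general bounded set $A$ in place of $B(0,R)$, one has the cruder estimate $\mathrm{Area}(A(y^*))\le \mathrm{Area}(A)+\mathrm{diam}(A)\cdot|y^*|\cdot O(1)$, which is equally sufficient.) Plugging this in,
\[
\E N_R \;\le\; \lambda\bigl(\pi R^2 \,+\, 2R\,\E^*|y^*|\bigr).
\]

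The third step is the only place where the standing hypothesis enters: since $|y^*|$ is the length of the edge from the typical point, $|y^*|\le L^*\le X$, and by hypothesis $\E^*X<\infty$. Therefore $\E N_R<\infty$, as required. I do not foresee any genuine obstacle; the only subtlety is making sure the Palm/Campbell identity is applied correctly (the edge from a point is a translation-equivariant function of the configuration, so this is a standard application), and recognizing that the relevant locus $A(y^*)$ is a Minkowski sum whose area grows only linearly in $|y^*|$ in dimension $d=2$.
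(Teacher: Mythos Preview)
Your argument is correct and is essentially the continuous (Campbell--Mecke) version of the paper's discrete mass-transport proof. The paper tiles $\R^d$ by unit cubes $Q_u$, sets $f(u,v)$ to be the expected number of points $x\in Q_u$ whose edge meets $Q_v$, bounds $\sum_v f(0,v)\le d(1+\E^* X)$ via the observation that a segment of length $\ell$ meets at most $d(\ell+1)$ cubes, and then applies mass transport to conclude $\sum_u f(u,0)<\infty$. Your Minkowski-sum computation $\mathrm{Area}(A(y^*))=\pi R^2+2R|y^*|$ plays exactly the role of the cube-count bound, and the Campbell identity replaces the discrete mass-transport swap; the two arguments are interchangeable formulations of the same Fubini trick. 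One small point: in the paper $\lambda$ denotes the intensity \emph{vector}, so your scalar ``$\lambda$'' should be written $\sum_i\lambda_i$.
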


This is a simple variant of Lemma~10 of \cite{HPPS}, and the proof there
(in the case of 2 colours) applies in our case as well. We include it for
completeness. Note that this holds in any dimension.

\begin{proof}
  For $u\in\Z^d$, let $Q_u$ be the box $u+[0,1]^d$. For $u,v\in\Z^d$ let
  $f(u,v)$ be the expected number of $x$ in $[\Psi]\cap Q_u$ so that the
  segment $\dseg{xy}$ intersects $Q_v$.

  Since a segment of length $\ell$ intersects at most $d(\ell+1)$ cubes,
  and since the length of $\dseg{xy}$ is at most $\diam(\M(x))$, we get
  $\sum_v f(0,v) \leq d(1+\E^* X) < \infty$. By the mass transport
  principle, this equals $\sum_u f(u,0)$, which is therefore finite, but
  this is just the expected number of segments that intersect $Q_0$.

  The claim for any bounded $A$ follows immediately.
\end{proof}

A central tool for the two dimensional case, is the following construction
of a weighted directed graph from any given matching, and an embedding of
the graph in the plane
(with edge intersections allowed). For two points $x,y$, let $\dseg{xy}$ be
the directed line segment from $x$ to $y$. For any point $x\in[\Pi_i]$, let
$y$ be the leftmost point in $\M(x)$, then we have a directed edge from $x$
to $y$, embedded along $\dseg{xy}$, with weight $\eta_i$ (where $i$ is the
colour of $x$). This includes a self loop at $y$. We think of this as a
flow of charge along the directed edge. Since for any family type $\bv$ that
appear in the matching we have $\eta\cdot\bv=0$, a.s.\ the total weight of
all edges entering any given $x$ is $0$.

\begin{lemma}\label{L:lower2}
  If $d=2$ and $\lambda\in\partial\cone(V)$, then every
  translation-invariant $V$-matching scheme satisfies $\E^* X = \infty$.
\end{lemma}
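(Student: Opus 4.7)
My plan is to adapt the argument of \cref{L:lower1} from $d=1$ to $d=2$. Assume for contradiction that $\E^* X < \infty$. First, I use the weighted directed graph defined immediately before the lemma: each point $x \in [\Psi]$ has a single outgoing edge to the leftmost point $y(x) \in \M(x)$, carrying charge $\eta(x)$. Because $\eta \cdot \bv = 0$ for every family type $\bv$ appearing in the matching, the total incoming charge at each vertex vanishes.

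I then estimate the number $N_L$ of matching edges crossing $\partial Q_L$, where $Q_L := [-L,L]^2$. The zero-sum property at vertices forces $K N_L \ge |\Psi_\eta(Q_L)|$ for some constant $K$ depending only on $\eta$ and $V$. In $d=2$ the centered process $\Psi_\eta$ has variance of order $L^2$ on $Q_L$, so the central limit theorem gives $\E|\Psi_\eta(Q_L)| \ge cL$, and hence $\E N_L \ge cL$. On the other hand, \cref{L:locally_finite} (whose hypothesis is in force) together with Campbell's formula yields the complementary estimate
\[
\E N_L \;\le\; C \int_{Q_L} \P^*\bigl(X > \dist(x,\partial Q_L)\bigr)\, dx \;=\; 8C \int_0^L (L-t)\, \P^*(X > t)\, dt,
\]
since an edge from $x\in Q_L$ to $y(x)\notin Q_L$ has length at least $\dist(x,\partial Q_L)$.

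The main obstacle is that the naive combination of these two bounds yields only $\int_0^L (L-t)\,\P^*(X>t)\,dt \ge c' L$, implying $\E^* X \ge c''$ but not $\E^* X = \infty$. In $d=1$ the analogous inequality reads $\int_0^L \P^*(X>t)\,dt \ge c\sqrt{L}$ (because the boundary of $[-L,L]$ is zero-dimensional), from which $\E^*\sqrt{X}=\infty$ follows at once; in $d=2$ the extra factor $(L-t)$ coming from the codimension-one boundary is consistent with any finite mean. To bridge this gap I plan to exploit the flow structure at multiple scales: subdivide $Q_L$ into dyadic sub-boxes and sum the CLT bounds on the (essentially independent) charge fluctuations at each scale, so that the total transport cost carried by the charge graph is forced to grow like $L\sqrt{\log L}$, in analogy with the Ajtai--Koml\'os--Tusn\'ady estimate for two-dimensional optimal bipartite matching. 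Combined with the upper bound from Campbell's formula and a refinement of \cref{L:locally_finite} to handle long edges, this should produce a divergent lower bound on $\E^*\min(X,L)$ as $L\to\infty$, contradicting $\E^* X<\infty$.
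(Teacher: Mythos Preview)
Your diagnosis of the obstacle is exactly right: the direct adaptation of the $d=1$ argument yields only $\int_0^L (L-t)\,\P^*(X>t)\,dt \gtrsim L$, which is compatible with finite $\E^* X$. However, the remedy you sketch---a multi-scale AKT-type argument---is not a proof as written. You would need to (a) make precise what ``total transport cost'' means (you write $L\sqrt{\log L}$, but the relevant total edge length in $Q_L$ is $\Theta(L^2)\cdot \E^*\min(X,L)$, so you presumably intend $L^2\sqrt{\log L}$ or similar), (b) show that the charge discrepancies at different dyadic scales produce lower bounds that actually \emph{add} rather than being absorbed by the same long edges, and (c) carry out the duality argument that converts these discrepancies into a transport-cost lower bound. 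This is doable---it is essentially the Ajtai--Koml\'os--Tusn\'ady lower bound, adapted to an infinite-volume, translation-invariant matching via the charge function $\eta$---but it is a real piece of work, and nothing in your proposal indicates how steps (b) and (c) go.

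The paper bypasses all of this with a short ergodic-theoretic argument that is worth knowing. Assume $\M$ is ergodic and $\E^* X<\infty$. For $u,v\in\R^2$ (generic), let $F(u,v)$ be the net signed charge-flux across the directed segment $\overrightarrow{uv}$, using the flow that sends charge $\eta(x)/\#\M(x)$ from each $x$ to the leftmost point of $\M(x)$. \cref{L:locally_finite} guarantees $F(u,v)$ is a.s.\ finite with finite mean. Since each family has zero total charge, the sum of $F$ around any simple closed polygon equals $\Psi_\eta$ of its interior. Now apply the ergodic theorem along a fixed direction: $F(x,x+nu)/n \to \Phi(x,u)$ a.s.\ and in $L^1$. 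Running the parallelogram identity with one side $nu$ and the other a fixed $x$ shows $\Phi(x,u)=\Phi(0,u)$ a.s.; by ergodicity this is a deterministic constant $\phi(u)$. Finally, for the $n\times n$ square $S_n$ with sides in directions $u=(1,0)$, $v=(0,1)$,
\[
\frac{\Psi_\eta(S_n)}{n} = \frac{F(0,nu)}{n} + \frac{F(nu,nu+nv)}{n} - \frac{F(nv,nv+nu)}{n} - \frac{F(0,nv)}{n} \xrightarrow{L^1} \phi(u)+\phi(v)-\phi(u)-\phi(v)=0,
\]
whereas the central limit theorem gives $\Psi_\eta(S_n)/n \Rightarrow N(0,\sigma^2)$ with $\sigma>0$. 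This is the contradiction.

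The contrast: your approach tries to accumulate fluctuations over many scales to force a super-linear transport cost; the paper instead shows that the flux per unit length along any line has a \emph{deterministic} limit, so the four sides of a large square cancel to first order---but the interior charge has fluctuations of the same order, which is impossible. The ergodic argument is shorter, needs no quantitative independence between scales, and uses only $\E^* X<\infty$ (via \cref{L:locally_finite}) plus the CLT once.
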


As in the one dimensional case above, the fundamental idea is that
fluctuations in the empirical distribution of colours in a region is
likely to be unmatchable, and to require families that incorporate many
points outside the region.

\begin{proof}
  Without loss of generality we may assume that the matching scheme $\M$ is
  ergodic with respect to the full group of translations of $\R^2$; if not
  we apply the claimed result to the components in its ergodic
  decomposition.  Therefore suppose
  for a contradiction that $\M$ is an ergodic matching scheme satisfying
  $\E^*X<\infty$.

  Consider a flow along the graph defined above, where the flow along an
  edge $\dseg{xy}$ is the charge of $x$ divided by the family size (i.e.,
  $\eta_i/\#\M(x)$ if $x\in[\Pi_i]$). As in the case of $d=1$, all
  families have total charge $0$, so the total flow into any vertex is $0$
  (we include a flow from $x$ to itself along an edge of $0$ length).

  For an ordered pair $u,v\in\R^2$, we define the random
  variable $K(u,v)$ to be the total flux across the directed line segment
  $\dseg{uv}$ from left to right.  Formally, $K(u,v)$ may be defined as the
  sum of $\eta_i$ over all pairs $x,y$ with $x\in[\Pi_i]$, $y$ the leftmost
  point of $\M(x)$, and such that $(u,y,v,x)$ form the vertices of a convex
  quadrangle in counterclockwise order.  Also define $F(u,v) :=
  K(u,v)-K(v,u)$, the net flux across the segment.  We restrict this
  definition to points $u,v$ that are not themselves on any edge $x,y$ of
  the graph, and such that there is no point on the segment $u,v$.

  By \cref{L:locally_finite}, $F(u,v)$ is well defined for all such $u,v$,
  and has finite expectation.  Moreover, if $u,v,w$ are co-linear, then
  $F(u,v)+F(v,w) = F(u,w)$.  A crucial observation is that if
  $S = (u_0,u_1,\dots,u_k=u_0)$ is a
  simple, closed, positively oriented polygon, then (for any flow on a
  graph in the plane) $\sum_i F(u_i,u_{i+1})$
  is the total flow from all vertices of the graph inside the polygon minus
  the total
  flow into those vertices. (An edge may cross the polygon without
  terminating in it, in which case its contribution cancels in the sum.)
  Since in our case, the flow into each vertex is $0$, we find
  \[
  \sum_i F(u_i,u_{i+1}) = \sum \eta_i \Pi_i(S).
  \]
  By a slight abuse of notation we denote the latter sum by $\Psi_\eta(S)$.
  Similarly, if $S$ is negatively oriented the sum is $-\Psi_\eta(S)$.

  Fix vectors $x,u\in \R^2$ with $u\neq 0$. Using the ergodic
  theorem we deduce
  \begin{equation}\label{ergodic}
    \frac{F(x,x+nu)}{n} \xrightarrow{\text{a.s.\ and }L^1} \Phi(x,u)
    \quad\text{ as }n\to\infty,
  \end{equation}
  for some random variable $\Phi(x,u)$ with finite mean. We will show next
  that a.s.\ $\Phi(x,u)$ is constant in $x$, and deduce that it is
  deterministic.

  For linearly independent vectors $a,b\in\R^2$, let $S=S_{a,b}$ be the
  interior of the parallelogram with vertices $0,a,b,a+b$. Then a.s.\
  \begin{equation}\label{parallelogram}
    \pm\Psi_\eta(S_{a,b}) = F(0,a)-F(b,a+b)+F(a,a+b)-F(0,b),
  \end{equation}
  where the sign depends on the orientation of $0,a,a+b,b$ around the
  parallelogram.

  Applying this to $(a,b)=(nu,x)$ with $x$ and $u$ as above, we obtain
  \[
  \frac{\pm\Psi_\eta(S_{nu,x})}{n} =
  \frac{F(0,nu)}{n}-\frac{F(x,x+nu)}{n}+\frac{F(nu,x+nu)}{n}-\frac{F(0,x)}{n}.
  \]
  As $n\to\infty$, the left side converges a.s.\ to $0$ by the strong law
  of large numbers, while the last term converges a.s.\ to $0$ because
  $F(0,x)$ is a.s.\ finite.  An easy application of Borel-Cantelli shows
  that, since for each $n$ we have $F(nu,x+nu)\stackrel{d}{=} F(0,x)$, and
  the latter has finite mean, the third term on the right converges a.s.\
  to $0$.  Thus, using \eqref{ergodic},
  \[
  0 = \Phi(0,u) - \Phi(x,u) \quad \text{a.s.}
  \]
  Thus $\Phi(0,u)$ is a
  translation-invariant function of $\M$, and the ergodicity
  assumption implies that it is an a.s.\ constant, which we denote $\phi(u)$.
  Furthermore, since $F(x,x+nv)$ has the same law as $F(0,nv)$,
  it now follows from \eqref{ergodic} that
  \begin{equation}\label{fixed}
    \frac{F(x_n,x_n +nv)}{n} \xrightarrow[n\to\infty]{L^1} \phi(v)
  \end{equation}
  for any $u\in\R^2$ and any deterministic sequence $x_n\in\R^2$. In
  particular for $x_n=nu$,
  \[
  \frac{F(nu,nu+nv)}{n} \xrightarrow[n\to\infty]{L^1} \phi(v).
  \]
  Now let $u=(1,0)$, $v=(0,1)$, and consider the square $S_{nu,nv}$. By
  \eqref{fixed}
  \[
  \frac{\Psi_\eta(S_{nu,nv})}{n} \xrightarrow[n\to\infty]{L^1}
  \phi(u)+\phi(v)-\phi(u)-\phi(v)=0.
  \]
  On the other hand, by the central limit theorem,
  $\frac{\Psi_\eta(S_{nu,nv})}{n}$ converges in distribution to
  $N(0,\sigma^2)$ for some $\sigma>0$, a contradiction.
\end{proof}

\begin{proof}[Proof of \cref{T:main}(ii), lower bound]
  In the cases $d=1$ and $d=2$, this is precisely \cref{L:lower1,L:lower2}.
\end{proof}

\section{Infinitely many types}
\label{sec.infinite}

The proof of \cref{T:infinite} is mostly the same as in the case of
finitely many colours. We only describe in detail the parts of the proof that
differ.

\begin{proof}[Proof of \cref{T:infinite}]
  If $\lambda$ is outside the closure of the cone then the mass transport
  argument from \cref{sec.unsatisfiable} holds with no change.

  With infinitely many family types, it is possible that $\cone(V)$ is not
  closed.  For example, with family types $(n,n+1)$ for any $n\ge0$, the cone
  is $\{0\leq x < y\}$.  Thus it is possible that
  $\lambda\notin\cone(V)$ but is in the boundary of the cone.  In that
  case, as in \cref{sec.critical} we can choose some $\eta$ with
  \[
  \eta\cdot\lambda = 0 \geq\eta\cdot \bv^i.
  \]
  for all $\bv^i\in V$.  The same mass transport argument now shows that no
  matching
  uses any family $v\in V$ with $\eta\cdot v\neq 0$. Therefore if there is
  a matching scheme, there is one using only the subset $V_1$ of family
  types orthogonal to $\eta$.  In the example above, $V_1=\emptyset$, so
  no matching scheme exists.

  Since $\lambda\notin\cone(V)$, it is also not in $\cone(V_1)$. If
  $\lambda$ is also not in the closure of $\cone(V_1)$ then we are back in
  case (i), and there is no invariant matching scheme. Otherwise, we can
  repeat this procedure with a new vector $\eta_1$, giving a set $V_2$ and
  so on.

  More precisely, if $V_i$ and $\lambda$ are contained in some subspace of
  $\R^q$ of dimension at most $q-i$, and $\lambda\notin\cone(V_i)$, there
  is a non trivial linear functional on the subspace, given by some
  $\eta_i$, that separates $\lambda$ from $\cone(V_i)$.  Restricting to
  family types in the kernel of that operator gives a set $V_{i+1}$ that is
  contained in a subspace of dimension at most $q-(i+1)$.  Thus after at
  most $q$ iterations we find that $V_i$ is empty, and trivially there is
  no invariant matching scheme.

  \medskip

  If $\lambda\in\cone(V)$, then by Carath\'eodory's theorem \cite[Theorem
  17.1]{Rock} $\lambda$ is a linear combination of finitely many of the
  $\bv^i$s (at most $q$, specifically).  Hence $\lambda$ is also in the
  cone of a finite subset $V'\subset V$, and matchings could be constructed
  using only family types in $V'$.

  If $\lambda$ is in the boundary and in the cone then it is also in the
  boundary of the cone spanned by the finite set $V'$. The
  constructions for the critical case of \cref{T:main} apply and we get the
  same upper bounds.

  If $\lambda$ is in the interior of the cone then it is also in the
  interior of the cone spanned by some finite subset $V'\subset V$.  This is
  since for any denumerable $V$ we have $\cone(V) = \bigcup_k
  \cone(\{\bv^1,\dots,\bv^k\})$. Thus we can apply \cref{T:main} to get a
  $V'$-Matching scheme with the claimed upper bound for the typical
  distance by restricting ourselves to families in that finite set.

  The proofs of the lower bounds in the critical and underconstrained cases
  continue to hold verbatim.
\end{proof}

\section{Multicoloured pair matchings}
\label{s:pairs}

In this section we give the proof of \cref{P:pairs}, which relates
existence of deficient and critical sets to the location of $\lambda$
w.r.t.\ $\cone(V)$ (and via \cref{T:main} to the possible tail behaviours
of matchings).

As noted, the claim is essentially a result on existence of fractional
matchings in graphs.  We did not find a reference which also addresses the
issue of critical sets.  Since the proof is short we include it here in its
entirety.

\begin{proof}[Proof of \cref{P:pairs}]
  We apply the Max Flow -- Min Cut Theorem to a network constructed from
  $V$ as follows.  For each colour $i$ there are two vertices denoted $s_i$
  and $t_i$.  There is an edge $(s_i,t_j)$ if and only if $i\sim j$, and
  these edges have infinite capacity.  There is an additional source vertex
  $\sigma$ connected to each $s_i$ by an edge with capacity $\lambda_i$, and
  a target vertex $\tau$ connected to each $t_i$ by an edge with capacity
  $\lambda_i$ (see \cref{fig:flow}).

  \begin{figure}
    \centering
    \includegraphics[width=.8\textwidth]{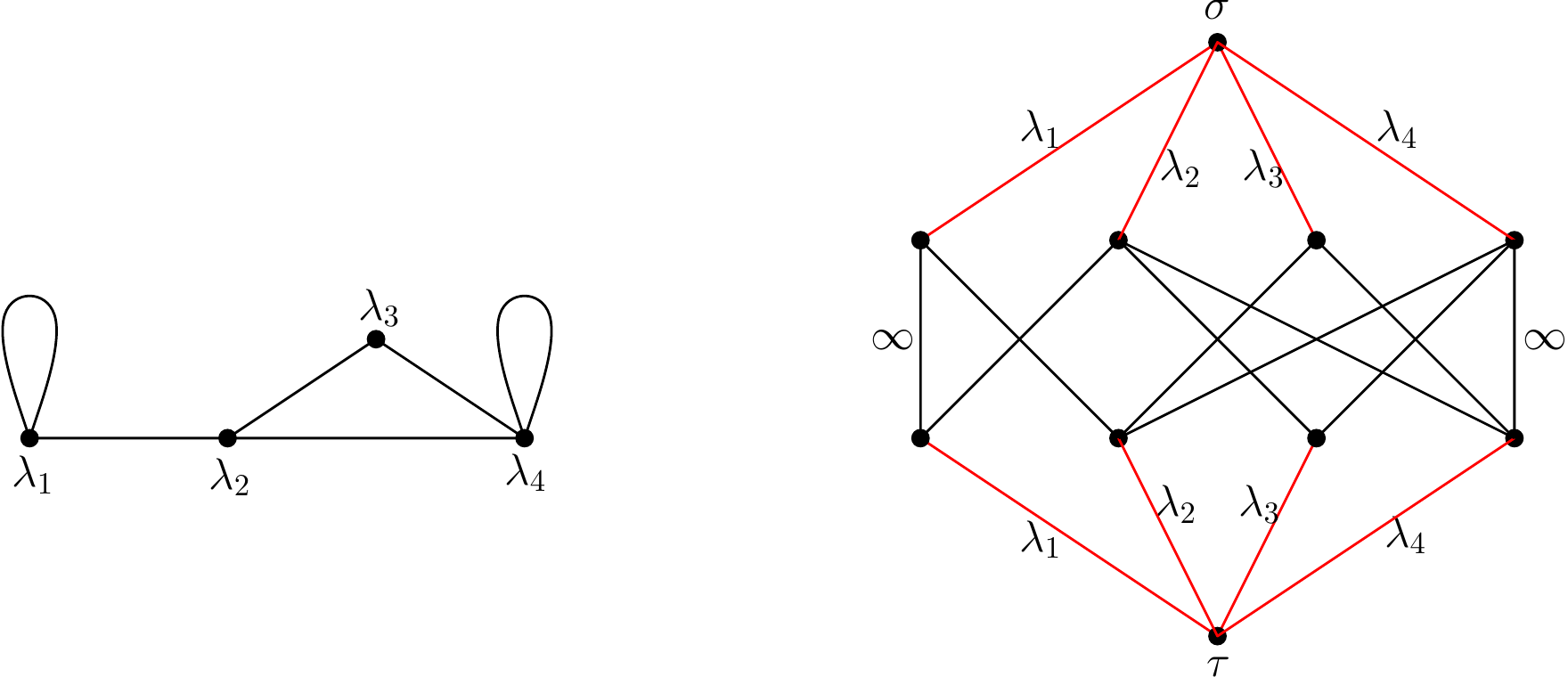}
    \caption{Left: A relation encoded as a graph, with intensities
      $\lambda_1,\dots,\lambda_4$ for the four colours.  Right: The
      corresponding network with edge capacities.}
    \label{fig:flow}
  \end{figure}

  Consider now the maximal flow from $\sigma$ to $\tau$ which is consistent
  with the given edge capacities, and let $f_{ij}$
  be the flow through the edge $(s_i,t_j)$.  By the Max Flow -- Min Cut
  Theorem, the total flow $\sum_{i,j} f_{ij}$ equals the minimal capacity of
  a cutset of edges separating $\sigma$ from $\tau$.  Such a minimal cutset
  cannot contain any edge $(s_i,t_j)$, as these have infinite capacity.  If
  a cutset contains the edges
  $\{(\sigma,s_i)\ :\ i\not\in S\}$, then to be a cutset it must contain
  all edges $\{(t_j,\tau)\ :\ j\in N(S)\}$.  Such a cutset has capacity
  \[
  \lambda(S^c) + \lambda(N(S)) = \|\lambda\|_1 - \lambda(S) +
  \lambda(N(S)),
  \]
  This is strictly less than $\|\lambda\|_1$ if and only if
  $\lambda(S)>\lambda(N(S))$, i.e.\ if $S$ is deficient.  Since taking
  $S=\emptyset$ gives a cutset of capacity $\|\lambda\|_1$, the maximal
  flow is $\|\lambda\|_1$ if and only if there is no deficient set.

  We now argue that a flow of $\|\lambda\|_1$ exists if and only if
  $\lambda\in\cone(V)$.  Indeed, if $\lambda\in\cone(V)$, then we have
  $\lambda = \sum a_{ij}(e_i+e_j)$ with some $a_{ij}=0$ when $i\not\sim j$.
  Take a flow of $a_{ij}$ on each of the edges $(s_i,t_j)$ and $(s_j,t_i)$,
  with the convention that if $i=j$ the flow on $(s_i,t_i)$ is $2a_ii$.
  Take a flow at maximal capacity $\lambda_i$ on the edges $(\sigma,s_i)$
  and $(t_i,\tau)$.  This flow has the required total flow (and conserves
  mass at all vertices).  Conversely, if there is a flow $f$ of size
  $\|\lambda\|_1$, let $a_{ij} = \frac12 (f_{ij} + f_{ji})$ and observe
  that $\sum_{ij} a_{ij} (e_i+e_j) = 2\lambda$, and so
  $\lambda\in\cone(V)$.

  Thus $\lambda$ is outside the cone if and only if there is some
  deficient set $S$.  If every set is excessive then the same holds
  for any sufficiently small perturbation of $\lambda$, and so
  $\lambda\in\cone(V)^\circ$.  If there is some critical set, then since
  $S\not= N(S)$ there is some $x\in S \setminus N(S)$.  Increasing
  $\lambda_x$ by any amount makes the set deficient.  Thus if there is a
  critical set but no deficient set, $\lambda\in\partial\cone(V)$.
\end{proof}

\section{Open questions}\label{s:questions}

\paragraph{Factor matchings in higher dimensions.}
What is the optimal tail behavior of a multicolour matching that is a
factor (i.e.\ a deterministic, translation equivariant function) of the
Poisson processes?  \cref{T:factor} gives some information in the case
$d=1$.  For $d\geq 2$, the condition $\lattice\neq\Z^q$ is no longer a
clear obstacle.  Do there exist matchings with the same tail behaviour as
in \cref{T:main} even if $\lattice\neq\Z^q$?

\paragraph{Stable matchings.}
When the allowed families all have size two, a matching is called
\emph{stable} if there do not exist two points that are closer to each
other than to their respective partners, but that could form a legal
family.  Stable matchings in the one-colour and two-colour cases are
investigated in \cite{HPPS}.  For general multicolour matching in pairs,
when does a perfect stable matching exist?  When a stable matching exists,
what can be said about $X$?  See \cite{HMP} for some progress in certain
cases.  When families may have more than two elements, there are many
possible non-equivalent extensions of the notion of stability, and the
questions of existence and properties are also of interest.

\paragraph{Non-crossing matchings.}
A matching into pairs is called {\it non-crossing} if the line segments
joining the points of each pair are pairwise disjoint.  For processes in
$\R^2$, the question of existence of a non-intersecting invariant matching
in two dimensions is open even for the case of two colours of equal
intensities.  See \cite{H-geom}.  Again, there are several ways to
generalize this notion to other family types. For instance, one can ask
that there is some choice of line segments connecting the points of each
family so that the sets of line segments do not intersect each other.
Alternatively, one could ask that the convex hulls of the families are
disjoint. In the latter sense the question is not trivial in higher
dimensions $d$, provided $d$ is at most twice the maximum family size.

\paragraph{Minimal matchings.}
Still in the setting of matching in pairs, a matching is called
\emph{minimal} if any other valid matching resulting by re-matching some
finite subset of points has a larger total length.  This notion too can be
extended in different ways to matchings with families of other sizes.
Under what conditions does a minimal matching exist?  This is open even in
the case of two colour matching.

\paragraph{Refined tail behavior.}
The lower and upper bounds on the tail of $X$ are generally close, but a
gap still exists.  For example, in the critical two dimensional case we
know that $\E^* X = \infty$ and that ther is a matching with
$\P^*(X>r) \leq C/r$.  Could there be a matching with
$\P^*(X>r) < C/(r\log r)$?  In the underconstrained case there are lower
and upper bounds $e^{-C r^d} \leq \P(X>r) \leq e^{-c r^d}$.  Can these
bounds be replaced by $\P^*(X>r) = e^{-a r^d + o(r^d)}$ with the same
constant $a$ for both sides?

\bibliography{matchings}

\begin{thebibliography}{10}

\bibitem{BLPS}
Itai Benjamini, Russell Lyons, Yuval Peres, and Oded Schramm.
\newblock Uniform spanning forests.
\newblock {\em Ann. Probab.}, 29(1):1--65, 2001.

\bibitem{BS}
Itai Benjamini and Oded Schramm.
\newblock Percolation in the hyperbolic plane [mr1815220].
\newblock {\em J. Amer. Math. Soc.}, 14(2):487--507 (electronic), 2001.

\bibitem{H-geom}
Alexander~E. Holroyd.
\newblock Geometric properties of {P}oisson matchings.
\newblock {\em Probability Theory and Related Fields}, 150(3):511--527, 2011.

\bibitem{h-liggett}
Alexander~E. Holroyd and Tom~M. Liggett.
\newblock How to find an extra head: optimal random shifts of {B}ernoulli and
  {P}oisson random fields.
\newblock {\em Ann. Probab.}, 29(4):1405-1425, 2001.

\bibitem{HMP}
Alexander~E. Holroyd, James~B. Martin, and Yuval Peres.
\newblock Asymmetric stable matchings in high dimensions.
\newblock In preparation.

\bibitem{HPPS}
Alexander~E. Holroyd, Robin Pemantle, Yuval Peres, and Oded Schramm.
\newblock Poisson matching.
\newblock {\em Ann. Inst. Henri Poincare Probab. Stat.}, 2009.

\bibitem{kall}
O.~Kallenberg.
\newblock {\em Foundations of Modern Probability}.
\newblock Springer, 2nd edition edition, 2002.

\bibitem{lyons}
Russell Lyons and Yuval Peres.
\newblock Probability on trees and networks. book in preparation, 2015.

\bibitem{Rock}
R.~T. Rockafellar.
\newblock {\em Convex analysis}.
\newblock Number~28 in Princeton Mathematical Series. Princeton University
  Press, 1970.

\bibitem{fgt}
Edward~R. Scheinerman and Daniel~H. Ullman.
\newblock {\em Fractional Graph Theory}.
\newblock Wiley and Sons, 2008.

\bibitem{Timar}
Adam Timar.
\newblock Invariant matchings of exponential tail on coin flips in $z^d$.
\newblock arXiv:0909.1090, 2009.

\bibitem{tutte}
William~T. Tutte.
\newblock The factorization of linear graphs.
\newblock {\em J. London Math. Soc.}, 22:107--111, 1947.

\end{thebibliography}

\bigskip

\begin{enumerate}[font=\sc, leftmargin=55mm, labelsep=5mm]

\item[Gideon Amir]
  Bar Ilan University, Ramat Gan, Israel \\
  \verb+gidi.amir@gmail.com+

\item[Omer Angel]
  University of British Columbia, Vancouver, Canada \\
  \verb+angel@math.ubc.ca+

\item[Alexander E. Holroyd]
  Microsoft Research, Redmond, USA \\
  \verb+holroyd@microsoft.com+

\end{enumerate}

\end{document}